\documentclass[reqno,12pt,letterpaper]{amsart}
\usepackage{amsmath,amssymb,amsthm,graphicx,mathrsfs,url}
\usepackage[usenames,dvipsnames]{color}
\usepackage[colorlinks=true,linkcolor=Red,citecolor=Green]{hyperref}
\setlength{\textheight}{8.50in} \setlength{\oddsidemargin}{0.00in}
\setlength{\evensidemargin}{0.00in} \setlength{\textwidth}{6.08in}
\setlength{\topmargin}{0.00in} \setlength{\headheight}{0.18in}
\setlength{\marginparwidth}{1.0in}
\setlength{\abovedisplayskip}{0.2in}
\setlength{\belowdisplayskip}{0.2in}
\setlength{\parskip}{0.04in}
\usepackage[usenames,dvipsnames]{xcolor}
\usepackage{tikz}
\setlength{\lineskip}{0.02in}

\usepackage{graphicx}
\usepackage[colorlinks=true]{hyperref}
\usepackage{color}
\usepackage{amsmath,amsfonts}
\usepackage{calrsfs}
\usepackage{amsmath,environ}
\DeclareMathAlphabet{\pazocal}{OMS}{zplm}{m}{n}

\newcommand{\lamba}{\lambda}
\numberwithin{equation}{section}
\usepackage{amsmath, amsthm}
    \usepackage{dsfont}
    \usepackage{fancybox}
    \usepackage{amssymb}

\newcommand{\R}{\mathbb{R}}

\newcommand{\p}{{\partial}}

\newcommand{\w}{\omega}

\newcommand{\matrice}[1]{\left( \begin{matrix}
#1
\end{matrix} \right)}

\newcommand{\epsi}{\varepsilon}
\newcommand{\sgn}{{\text{sgn}}}
\newcommand{\trace}{{\operatorname{Tr}}} 

\newcommand{\te}{\theta}
\newcommand{\lr}[1]{\langle #1 \rangle}
\newcommand{\blr}[1]{\left\langle #1 \right\rangle}

\newcommand{\HS}{{\operatorname{HS}}}
\newcommand{\tV}{{\tilde{V}}}

\newcommand{\supp}{\mathrm{supp}}

\newcommand{\Z}{\mathbb{Z}}
\newcommand{\Dd}{\mathbb{D}}

\newcommand{\Ss}{\mathbb{S}}
\newcommand{\Id}{{\operatorname{Id}}}
\newcommand{\BB}{\mathcal{B}}
\newcommand{\VV}{{\mathcal{V}}}

\newcommand{\LL}{{\mathcal{L}}}
\newcommand{\HH}{\mathcal{H}}

\newcommand{\eff}{{\operatorname{eff}}}

\newcommand{\C}{\mathbb{C}}

\newcommand{\og}{\overline{g}}

\newcommand{\az}{\alpha}

\newcommand{\tu}{{\tilde{u}}}
\newcommand{\tw}{{\tilde{v}}}

\renewcommand{\Re}{\operatorname{Re}}
\renewcommand{\Im}{\operatorname{Im}}

\newcommand{\Ee}{\mathbb{E}}

\newcommand{\Pp}{{\mathbb{P}}}

\newcommand{\hq}{{\hat{q}}}

\newcommand{\Res}{\operatorname{Res}}

\newcommand{\tsigma}{\tilde{\sigma}}

\newcommand{\1}{\mathds{1}}

\newcommand{\de}{\mathrel{\stackrel{\makebox[0pt]{\mbox{\normalfont\tiny def}}}{=}}}
\newcommand{\ras}{\mathrel{\stackrel{\makebox[0pt]{\mbox{\normalfont\tiny $\Pp$-a.s.}}}{\longrightarrow}}}
\newcommand{\law}{\mathrel{\stackrel{\makebox[0pt]{\mbox{\normalfont\tiny d}}}{\longrightarrow}}}
\newcommand{\NN}{\mathcal{N}}

\title[Resonances for random highly oscillatory potentials]{Resonances for random highly oscillatory potentials}
\date{\today}

\author{Alexis Drouot}
\email{alexis.drouot@gmail.com}

\NewEnviron{equations}{%
\begin{equation}\begin{gathered}
  \BODY
\end{gathered}\end{equation}
}

\NewEnviron{equations*}{%
\begin{equation*}\begin{gathered}
  \BODY
\end{gathered}\end{equation*}
}

\newtheorem{thm}{Theorem}
\newtheorem{cor}{Corollary}[section]
\newtheorem{rmk}{Remark}[section]
\newtheorem{lem}{Lemma}[section]

\newtheorem{theorem}[thm]{Theorem}
\theoremstyle{definition}

\begin{document}
\maketitle

\begin{abstract} We study discrete spectral quantities associated to Schr\"odinger operators of the form $-\Delta_{\R^d}+V_N$, $d$ odd. The potential $V_N$  models a highly disordered crystal; it varies randomly at scale $N^{-1} \ll 1$. We use perturbation analysis to obtain almost sure convergence of the eigenvalues and scattering resonances of $-\Delta_{\R^d} +V_N$ as $N \rightarrow \infty$. We identify a stochastic and a deterministic regime for the speed of convergence. The type of regime depends whether  the low frequencies effects due to large deviations overcome the (deterministic) constructive interference between highly oscillatory terms.\end{abstract}

\section{Introduction}\label{sec:1}

Predicting the behavior of waves scattered by a highly disordered material poses difficult practical issues. The material is usually difficult to know accurately and its defects and impurities can have a large impact on the diffusion. This motivates a general study of propagation of waves through random medias. This vast subject of research has various applications. We refer to the seminal paper of Anderson \cite{And} for the absence of diffusion of waves by certain models of condensed matter physics; to Mysak \cite{My} and Devillard--Dunlop--Souillard \cite{DDS} for water waves prediction; to the monographs of Andrew--Phillips \cite{AP} and Fehler--Maeda--Sato \cite{FSa} for applications in electromagnetism and seismography, respectively. Current mathematical research includes proofs of homogenization results and rigorous derivation of radiative transfer equations. The lecture note of Bal \cite{Bal} are a comprehensive introduction to theoretical aspects of waves in random media.

In this paper, we propose and analyze a model for waves scattered by a highly heterogeneous localized media in $\R^d$, $d$ odd. The disordered media is assumed to produce the random potential
\begin{equation}\label{eq:2s}
V_N(x) \de q_0(x) + \sum_{j \in [-N,N]^d} u_j q(Nx-j), \ \ \ \ N \gg 1, \ \ x \in \R^d,
\end{equation}
where $q_0$ and $q$ lie in $C^\infty_0(\R^d,\C)$ and $\{u_j\}_{j \in \Z^d}$ are bounded independent identically distributed random variables, with expected value $\Ee(u_j) = 0$ and variance $\Ee(u_j^2) = 1$. $V_N$ is the potential created by a localized crystal $\{j/N, \ j \in [-N,N]^d\}$ plunged in the external field $q_0$, with sites $j/N$ each generating a potential $u_j q(Nx-j)$. The scale of heterogeneity of such crystals is $N^{-1} \ll 1$.

When the $u_j$'s in \eqref{eq:2s} are replaced by $(-1)^{j_1+...j_d}$, the potential $V_N$ is deterministic and highly oscillatory. It can be seen as an idealized version of the random case, because the oscillations are perfectly alternated. In this (deterministic) idealized case, the works of Borisov--Gadyl'Shin \cite{BG}, Borisov \cite{B}, Duch\^ene--Weinstein \cite{DW}, Duch\^ene--Vuki\'cevi\'c--Weinstein \cite{DVW} and ourselves \cite{Dr2,Dr3} investigate the behavior of scattering resonances of $-\Delta_{\R^d}+V_N$ for large $N$. The present work aims to provide an extension of these works to the random case. For a pictorial comparison of the stochastic and deterministic version of $V_N$, see Figure \ref{fig:1}.

\begin{center}
\begin{figure}
{\includegraphics[width=16.5cm]{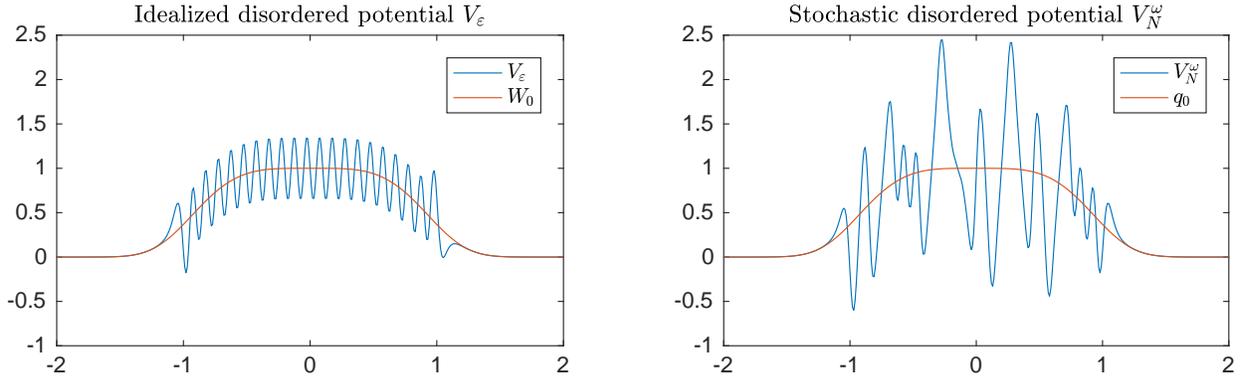}}
{\caption{On the left, deterministic version of $V_N$ studied in \cite{BG,B,DW,DVW,Dr2,Dr3}. On the right, stochastic potential $V_N$. Here $N = \epsi^{-1} = 20$ and $W_0=q_0$.}\label{fig:1}}
\end{figure}
\end{center}

\vspace*{-8mm}

We focus on scattering resonances of $V_N$, which are generalizations of eigenvalues of $-\Delta+V_N$. Generally speaking, the set of resonances $\Res(\VV)$ of a potential $\VV \in C_0^\infty(\R^d,\C)$, $d$ odd, is the set of poles of the meromorphic continuation to $\C$ of
\begin{equation*}
R_\VV(\lambda) = (-\Delta_{\R^d}+\VV- \lambda^2)^{-1} \ :\  C_0^\infty(\R^d,\C) \rightarrow C^\infty(\R^d,\C).
\end{equation*}
When $\VV$ is real-valued, the $L^2$-spectrum of $-\Delta_{\R^d}+\VV$ is purely continuous, equal to $[0,\infty)$, modulo a finite set of eigenvalues that are all negative. In this case, resonances of $\VV$ in the upper half plane are in one-to-one correspondence with eigenvalues of $-\Delta_{\R^d}+\VV$: $\lambda \in \Res(\VV)$ with $\Im \lambda > 0$ if and only if $\lambda^2$ is an eigenvalue of $-\Delta_{\R^d}+\VV$. Other resonances play the role of generalized eigenvalues for open systems. In particular, they strikingly quantize the exponential decay rates of waves scattered by $\VV$: if $u$ is a sufficiently nice solution of $(\p_t^2-\Delta_{\R^d} + V_N) u = 0$, then $u$ admits the formal expansion
\begin{equation}\label{eq:1h}
u(t,x) \sim \sum_{\lambda \in \Res(\VV)} u_{\lambda}(x) e^{i\lambda t}, \ \ u_\lambda : \R^d \rightarrow \C.
\end{equation}
(we put aside the issue of multiplicity). For every $A > 0$, $\Res(\VV) \cap \{\lambda \ : \ \Im \lambda \geq -A\}$ is a finite set; thus \eqref{eq:1h} admits a rigorous formulation in terms of exponential decay of the local energy, see e.g. \cite[Theorem 3.9]{DZ}. A comprehensive introduction to scattering resonances is found in \cite[Chapters~2~and~3]{DZ}.

\subsection{Results} We localize precisely the eigenvalues and scattering resonances of the random Schr\"odinger operator $-\Delta_{\R^d}+V_N$, where $V_N$ is the chaotic potential given in \eqref{eq:2s}. Our analysis lies within the effective media theory, which investigates whether rapidly varying terms can be replaced by suitable slowly varying terms. When $V_N$ is real-valued, our localization results transfer directly to qualitative information on the long-time behavior of waves, see e.g. the remark below Theorem \ref{thm:0}.

In the rest of the paper, $q_0$ and $q$ are two smooth compactly supported functions and 
\begin{equation*}
V_N(x) \de q_0(x) + \sum_{j \in [-N,N]^d} u_j q(Nx-j), \ \ \ \ N \gg 1, \ \ x \in \R^d, \ \ d \text{ odd.}
\end{equation*}
The potential $V_N$ is bounded and has compact support, uniformly in $N$ and of the value of the $u_j$'s, see \eqref{eq:4d} and \eqref{eq:4e} below. 
Let $\hq$ be the Fourier transform of $q$:
\begin{equation*}
\hq(\xi) \de \int_{\R^d} e^{-i\xi x} q(x) dx.
\end{equation*}
The influence of low frequencies of $q$ is well described by the order of vanishing $m$ of $\hq(\xi)$ at $\xi=0$ (i.e., the largest integer such that $\hq(\xi) = O(|\xi|^m)$ near $0$). With this notation, we define
\begin{equation*}
\gamma \de \min(7/4, d/2+m).
\end{equation*}
We recall that $\Res(\VV) \subset \C$ is the set of resonances of $\VV$ and that we denote by $m_\lambda$ the geometric multiplicity of a resonance $\lambda$, i.e. the integer
\begin{equation*}
m_\lambda \de \text{Rank } \dfrac{1}{2\pi i}\oint_\lambda \left(-\Delta_{\R^d}+\VV-\mu^2\right)^{-1} 2\mu d\mu.
\end{equation*}
The set $\Res(\VV)$ is discrete; and for any $A > 0$, there exists $B$ depending only on $A$, on the diameter of the support of $\VV$ and on $|\VV|_\infty$ such that
\begin{equation}\label{eq:4c}
\# \Res(\VV) \cap \{ \lambda: \Im \lambda \geq -A, \ |\Re(\lambda)| \geq B \} < \infty,
\end{equation}
see for instance the proof of \cite[Theorem 2.8]{DZ}.

\begin{theorem}\label{thm:1} For any $R > 0$ such that $q_0$ has no resonance on $\p \Dd(0,R)$, there exist $C, c > 0$ such that with probability $1-Ce^{-cN^\gamma}$,
\begin{equation}\label{eq:2q}
\Res(V_N) \cap \Dd(0,R) \subset \bigcup_{\lambda \in \Res(q_0) \cap \Dd(0,R)} \Dd\left(\lambda, N^{-\frac{\gamma}{2m_\lambda}}\right).
\end{equation}
Conversely, if $\lambda \in \Res(q_0) \cap\Dd(0,R)$ has multiplicity $m_\lambda$, then with probability $1-Ce^{-cN^\gamma}$, $V_N$ has exactly $m_\lambda$ resonances in  $\Dd\left(\lambda, N^{-\frac{\gamma}{2m_\lambda}}\right)$  -- counted with multiplicity. 
\end{theorem}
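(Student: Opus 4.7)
The plan is to characterize $\Res(\VV)$ for $\VV \in \{q_0, V_N\}$ via a Fredholm determinant and apply Rouch\'e's theorem on small circles. Pick $\chi \in C_0^\infty(\R^d)$ equal to $1$ on a neighborhood of $\supp q_0 \cup \bigcup_N \supp V_N$ (the union is bounded by the uniform support bound on $V_N$) and set $R_0(\lambda) \de (-\Delta_{\R^d}-\lambda^2)^{-1}$, $K_\VV(\lambda) \de \VV R_0(\lambda)\chi$. Standard scattering theory \cite[Ch.~3]{DZ} shows that $K_\VV(\lambda)$ is trace-class (or at worst Hilbert--Schmidt, requiring a regularized determinant in high dimension), depends holomorphically on $\lambda \in \C$, and that $\Res(\VV) \cap \Dd(0,R)$ coincides, with multiplicity, with the zeros of $D_\VV(\lambda) \de \det(I + K_\VV(\lambda))$. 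Writing $V_N = q_0 + W_N$ with $W_N \de \sum_{j \in [-N,N]^d} u_j q(N\cdot - j)$, a finite union bound over the finite set (by \eqref{eq:4c}) $\Res(q_0) \cap \Dd(0,R)$ reduces the theorem to: at each $\lambda_0 \in \Res(q_0) \cap \Dd(0,R)$ of multiplicity $m_0$, with probability at least $1-Ce^{-cN^\gamma}$,
\begin{equation*}
 |D_N(\lambda) - D_0(\lambda)| < |D_0(\lambda)| \quad \text{on} \quad \{|\lambda-\lambda_0| = N^{-\gamma/(2m_0)}\}.
\end{equation*}

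The deterministic lower bound is immediate: since $D_0$ has a zero of order exactly $m_0$ at $\lambda_0$ and no other zero nearby, Taylor expansion gives $|D_0(\lambda)| \ge c_0 |\lambda-\lambda_0|^{m_0}$, whence $|D_0(\lambda)| \ge c_0 N^{-\gamma/2}$ on our circle. For the upper bound I would use the Plemelj--Smithies expansion
\begin{equation*}
D_N(\lambda)-D_0(\lambda) = \sum_{k \ge 1} P_k(\lambda; u),
\end{equation*}
where $P_k$ is a polynomial of degree $k$ in the variables $u_j$, with deterministic coefficients built from traces of products involving $q(N\cdot-j) R_0(\lambda)(I+K_{q_0}(\lambda))^{-1}\chi$. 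The linear term reads $P_1(\lambda;u) = \sum_j u_j T_j(\lambda)$ with
\begin{equation*}
T_j(\lambda) = D_0(\lambda)\cdot \trace\bigl(q(N\cdot-j) R_0(\lambda)(I+K_{q_0}(\lambda))^{-1}\chi\bigr).
\end{equation*}
Changing variables $y = Nx-j$ and Taylor expanding the smooth kernel of $R_0(\lambda)(I+K_{q_0}(\lambda))^{-1}\chi$, the vanishing $\hq(\xi) = O(|\xi|^m)$ near $0$ (equivalent to the vanishing of moments $\int q(y) y^\alpha dy$ for $|\alpha|<m$) gives $|T_j(\lambda)| = O(N^{-d-m})$ and hence $\sum_j |T_j(\lambda)|^2 = O(N^{-d-2m})$, uniformly in $\lambda$ on our circles. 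Since the $u_j$ are bounded, independent and centered, Hoeffding's inequality then yields
\begin{equation*}
\Pp\bigl(|P_1(\lambda;u)| > N^{-\gamma/2}\bigr) \le 2\exp(-c N^{d+2m-\gamma}),
\end{equation*}
which is $\le 2 e^{-cN^\gamma}$ provided $\gamma \le d/2+m$.

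For $k \ge 2$ the term $P_k$ is a polynomial chaos in the $u_j$ of degree $k$. Its zero-mean part I would bound by a Hanson--Wright-type concentration inequality for bounded variables, again exploiting the vanishing of $\hq$ at the origin to keep the chaos kernel small. The delicate point is the deterministic mean $\Ee P_k$ for even $k$: it encodes interference between $W_N$ and itself at the lattice spacing $1/N$, and does not decay faster than a threshold set by the smoothness of $R_0(\lambda)$ and $q_0$, irrespective of $m$; this is the mechanism responsible for the cap $\gamma \le 7/4$, analogous to the constructive-interference effect that limits resonance expansions in the deterministic setting of \cite{Dr2,Dr3}. Finally, to upgrade the pointwise bound to a uniform bound on $\{|\lambda-\lambda_0|=N^{-\gamma/(2m_0)}\}$, I would establish a deterministic Lipschitz estimate on $D_N-D_0$ (polynomial in $N$, using the analytic dependence of $K_{V_N}$ in $\lambda$ and the uniform bound on $V_N$), then cover the circle by a polynomial-size net and union bound—preserving the $e^{-cN^\gamma}$ tail.

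The main obstacle will be the fine analysis of the deterministic mean $\Ee P_2(\lambda;u) = \frac{1}{2} \sum_j \bigl( a_j(\lambda)^2 - b_{jj}(\lambda)\bigr)$ arising from the diagonal pair correlations $\Ee[u_j u_k] = \delta_{jk}$: rewriting this as an oscillatory Riemann sum $N^{-d}\sum_j F(\lambda, j/N)$ coupled with the Fourier decay of $\hq$, and extracting the correct power of $N$, requires an Euler--Maclaurin / stationary-phase argument parallel to the effective-medium analyses of \cite{DW,DVW,Dr2,Dr3}. Once this deterministic mean is shown to be dominated by $|D_0|$ on our circles precisely when $\gamma = \min(7/4, d/2+m)$, the Rouch\'e argument closes and produces exactly $m_0$ resonances of $V_N$ in $\Dd(\lambda_0, N^{-\gamma/(2m_0)})$.
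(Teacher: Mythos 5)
Your outline shares the Fredholm-determinant/Rouch\'e scaffolding with the paper — and your deterministic lower bound $|D_0(\lambda)| \ge c_0|\lambda-\lambda_0|^{m_0}$ on the circles $|\lambda-\lambda_0| = N^{-\gamma/(2m_0)}$ is exactly right — but the method of bounding $|D_N - D_0|$ diverges, and your version has a genuine gap. You expand the determinant in a polynomial chaos $\sum_{k\ge 1} P_k(\lambda;u)$ and must then control every $P_k$. You only execute $k=1$ (Hoeffding). For $k \ge 2$ you describe a plan — a Hanson--Wright-type bound for the centered part, an Euler--Maclaurin/stationary-phase analysis of the deterministic mean $\Ee P_2$ — but you do not carry it out, and you correctly flag $\Ee P_2$ yourself as the ``main obstacle''. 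Until that deterministic mean is shown to be $o(N^{-\gamma/2})$ uniformly on the circles, the Rouch\'e inequality is not established. Your route also requires converting pointwise-in-$\lambda$ probability bounds into a bound uniform over the circle via a net and a Lipschitz estimate on $D_N-D_0$; that estimate is likewise left unverified.

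The paper sidesteps both difficulties simultaneously. Rather than expanding $D_N - D_0$ in $u$, it proves the deterministic operator-norm estimate
\begin{equation*}
\sup_{\lambda \in \Dd(0,R)} \left|D_{q_0+V_\#}(\lambda) - D_{q_0}(\lambda)\right| \le C\,|V_\#|_{\HH^{-2}}
\end{equation*}
(Lemma~\ref{lem:1b}, estimate \eqref{eq:3d}), by writing $D_{q_0+V_\#}-D_{q_0} = \int_0^1 \p_t D_{q_0+tV_\#}\,dt$, using cyclicity of the trace to isolate a single factor $K_{V_\#}K_{q_0+tV_\#}$ inside trace-class pieces, and bounding that factor by $|\lr{D}^{-2}V_\#\lr{D}^{-2}|_\BB = |V_\#|_{\HH^{-2}}$. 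This bound sums all orders in $V_\#$ at once and is already uniform in $\lambda$, so no net argument is needed. The only probabilistic ingredient is Lemma~\ref{lem:1d}: a single Hanson--Wright application to the quadratic form $|V_\#|_{H^{-s}}^2 = \sum_{j,\ell}\alpha_{j\ell}u_ju_\ell$, with $\trace(\alpha)$ and $|\alpha|_\HS$ computed by splitting the Fourier integral defining $\alpha_{00}$ into low and high frequencies — which is exactly where the exponent $\gamma = \min(7/4, d/2+m)$ originates. The careful constructive-interference analysis of what you call $\Ee P_2$ is indeed performed in the paper, but it is deferred to Lemma~\ref{lem:1h} and used only for the finer asymptotics of Theorem~\ref{thm:2}; for Theorem~\ref{thm:1} the $\HH^{-2}$ abstraction renders it unnecessary.
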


An application of this theorem concerns local exponential decay for waves scattered by $V_N$. Assume that $q_0$ and $q$ are real-valued and that $\Res(q_0)$ is contained in $\{ \Im \lambda < -A\}$ for some $A > 0$ (this is satisfied for instance if $q_0 \geq 0$ and $q_0 \not\equiv 0$). Let $R > 0$ such that for any $N$,
\begin{equation*}
\Res(V_N) \cap \{\Im \lambda \geq -A \} \subset \Dd(0,R).
\end{equation*} 
The bound \eqref{eq:4c} guarantees that $R$ exists. Theorem \ref{thm:1} shows that with probability $1-O(e^{-cN^\gamma})$, resonances of $V_N$ are very close to resonances of $q_0$ in $\Dd(0,R)$, in particular that $\Res(V_N) \cap \Dd(0,R) \subset \{\Im \lambda < -A\}$. The characterization of resonances as quantized exponential decay of waves \cite[Theorem 3.9]{DZ} shows that with probability $1-O(e^{-cN^\gamma})$, any solution $u : \R \times \R^d \rightarrow \C$ of 
\begin{equation}\label{eq:1c}
(\p_t^2 - \Delta_{\R^d})u=0, \ \  u(0, \cdot) \in C_0^\infty(\R^d,\C),\ \ \p_t u(0,\cdot) \in C_0^\infty(\R^d,\C), \ \ d \geq 3
\end{equation}
decays faster than $e^{-At}$:
\begin{equation*}
\forall M > 0, \ \ \sup_{|x| \leq M} |u(x,t)| = o(e^{-At}).
\end{equation*}

A combination of the Borel--Cantelli lemma with Theorem \ref{thm:1} implies the following almost-sure, non-quantitative statement:

\begin{cor}\label{cor:1} The set of accumulation points of $\Re(V_N)$ is $\Pp$-a.s. equal to $\Res(q_0)$.\end{cor}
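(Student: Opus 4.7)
The plan is to extract from Theorem \ref{thm:1} an almost sure event on which both inclusions in the set equality hold simultaneously, by combining that theorem with the Borel--Cantelli lemma applied over a countable exhaustion by disks.

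First I would choose an increasing sequence of radii $R_n \uparrow \infty$ avoiding the exceptional set. Since $\Res(q_0)$ is discrete, the set of $R > 0$ such that $q_0$ has a resonance on $\p \Dd(0,R)$ is at most countable, so such $R_n$ exist. For each fixed $n$, Theorem \ref{thm:1} provides constants $C_n, c_n > 0$ and events $E_n^N$ of probability at least $1 - C_n e^{-c_n N^\gamma}$ on which
\[
\Res(V_N) \cap \Dd(0, R_n) \subset \bigcup_{\lambda \in \Res(q_0) \cap \Dd(0, R_n)} \Dd\bigl(\lambda, N^{-\gamma/(2m_\lambda)}\bigr),
\]
and each $\lambda \in \Res(q_0) \cap \Dd(0, R_n)$ carries exactly $m_\lambda$ resonances of $V_N$ in its companion disk. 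Because $\sum_N e^{-c_n N^\gamma}$ converges, the Borel--Cantelli lemma yields a full-measure event $F_n$ on which $E_n^N$ holds for all sufficiently large $N$. The intersection $F = \bigcap_{n \geq 1} F_n$ still has probability one.

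I would then argue on $F$. For the inclusion $\Res(q_0) \subset \{\textrm{accumulation points}\}$, fix $\lambda \in \Res(q_0)$ and pick $n$ with $\lambda \in \Dd(0,R_n)$; the second half of Theorem \ref{thm:1} produces, for all large $N$, a resonance of $V_N$ within $N^{-\gamma/(2m_\lambda)}$ of $\lambda$, exhibiting $\lambda$ as an accumulation point. For the reverse inclusion, let $\lambda$ be an accumulation point of $\{\Res(V_N)\}_N$ and pick $n$ with $\lambda \in \Dd(0,R_n-1)$; the first half of Theorem \ref{thm:1} forces every resonance of $V_N$ inside $\Dd(0,R_n)$ to lie within $N^{-\gamma/(2 m_{\max})}$ of the finite set $\Res(q_0) \cap \Dd(0,R_n)$, where $m_{\max}$ denotes the largest multiplicity there. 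Extracting a subsequence $\mu_{N_k} \in \Res(V_{N_k})$ converging to $\lambda$ and then a further subsequence along which the nearest point of $\Res(q_0) \cap \Dd(0,R_n)$ stabilizes (the set being finite) identifies $\lambda$ with a resonance of $q_0$.

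The argument has no genuine obstacle beyond bookkeeping: the rate $e^{-cN^\gamma}$ from Theorem \ref{thm:1} makes summability, and hence Borel--Cantelli, automatic, and countable unions of null sets remain null. The only mild point of care is to choose the exhausting radii $R_n$ outside the at-most-countable set on which $q_0$ carries a boundary resonance, so that Theorem \ref{thm:1} is applicable at each stage.
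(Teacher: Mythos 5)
Your proof is correct and follows essentially the same route as the paper: apply Theorem \ref{thm:1} at each fixed radius, use summability of $e^{-cN^\gamma}$ together with Borel--Cantelli to upgrade to an almost-sure statement, then intersect over a countable exhaustion by disks. You are somewhat more explicit than the paper about the two inclusions (existence of nearby resonances of $V_N$ to establish $\Res(q_0)$ as accumulation points, and the containment \eqref{eq:2q} for the converse) and about the minor point of choosing the radii $R_n$ to dodge the countably many forbidden values, but these are details the paper leaves implicit rather than a genuinely different argument.
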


Since the vanishing potential has a single a resonance in dimension one and none in higher dimension, Theorem \ref{thm:1} shows that all the resonances of $V_N$ with $q_0=0$ must escape to infinity as $N \rightarrow \infty$ (except one converging to $0$ in dimension one). The next result gives a lower bound on the rate of escape: 

\begin{theorem}\label{thm:0} Assume that $q_0 = 0$ and that $V_N$ is given by \eqref{eq:2s}. There exist $C, c, A > 0$ such that with probability $1-Ce^{-cN^\gamma}$, $V_N$ have no resonance above the line $\Im \lambda = -A\ln(N)$ -- apart from a single resonance in $\Dd(0,N^{-\gamma/2})$ when $d=1$.\end{theorem}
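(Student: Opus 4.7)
The plan is to extend Theorem \ref{thm:1} from a fixed disk to the logarithmic strip $\Sigma_N \de \{\Im\lambda \geq -A\ln N\}$. Since the zero potential has no resonance in odd dimension $d \geq 3$ and only the resonance $\lambda=0$ in $d=1$, applying Theorem \ref{thm:1} with a large fixed $R$ already gives, with probability $1 - Ce^{-cN^\gamma}$, that $\Res(V_N) \cap \Dd(0,R)$ is empty when $d \geq 3$ and reduced to a single resonance in $\Dd(0, N^{-\gamma/2})$ when $d=1$. It therefore remains to rule out resonances in $\Sigma_N \setminus \Dd(0, R)$.

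The first ingredient is a deterministic high-energy confinement. Write $V_N = u_N v_N$ with $u_N \de \sgn(V_N) |V_N|^{1/2}$ and $v_N \de |V_N|^{1/2}$. Since $|V_N|_\infty$ and the diameter of $\supp V_N$, bounded by some $D$, are uniform in $N$ and in the $u_j$'s, the standard cutoff resolvent estimate $\|\chi R_0(\lambda)\chi\|_{L^2 \to L^2} \leq C \lr{\lambda}^{-1} e^{D(-\Im\lambda)_+}$ together with the Birman--Schwinger principle forces any resonance of $V_N$ in $\Sigma_N$ to lie in $\Dd(0, R_N)$ with $R_N \de C N^{AD}$.

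The second ingredient is the probabilistic exclusion on $\Dd(0,R_N) \cap \Sigma_N$, via the Birman--Schwinger operator $K_N(\lambda) \de v_N R_0(\lambda) u_N$. At any fixed $\lambda$, the concentration bounds driving Theorem \ref{thm:1}---specialized to the present setting where $\Ee V_N = 0$, so that the deterministic part of $K_N$ vanishes---yield
\begin{equation*}
\Pp\big(\|K_N(\lambda)\| \geq 1/2\big) \leq Ce^{-cN^\gamma}.
\end{equation*}
I then promote this pointwise bound to a uniform one via an $\epsilon$-net. The operator $\chi R_0(\lambda)\chi$ is holomorphic in $\lambda$ with Cauchy derivative bound $\|\p_\lambda(\chi R_0(\lambda)\chi)\| \leq C\lr{\lambda} e^{D(-\Im\lambda)_+}$, which is at most $N^{C'}$ on $\Sigma_N \cap \Dd(0,R_N)$. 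Taking a grid of spacing $N^{-\beta}$ with $\beta > C'$ reduces the problem to $O(N^{B})$ centers, and the union bound preserves the probability $1 - Ce^{-cN^\gamma/2}$.

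The main obstacle is to make the pointwise concentration $e^{-cN^\gamma}$ beat the polynomial-in-$N$ losses coming simultaneously from the growth of the cutoff free resolvent on $\Sigma_N$, from the volume of the confinement disk $\Dd(0,R_N)$, and from the Cauchy derivative estimates needed for the net. This works because $\gamma > 0$ makes the concentration bound super-polynomial in $N$; the parameters $A$ and $\beta$ are then chosen small and large enough, respectively, to close the argument.
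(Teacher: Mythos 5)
Your proposal runs into two problems, one of which is fatal as written.

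\textbf{The Birman--Schwinger decomposition loses the oscillation.} You factor $V_N = u_N v_N$ with $u_N=\sgn(V_N)|V_N|^{1/2}$ and $v_N=|V_N|^{1/2}$, then hope to show $\|K_N(\lambda)\|=\|v_N R_0(\lambda)u_N\|<1/2$ with high probability. But $v_N=|V_N|^{1/2}$ is a \emph{nonnegative} function of size $O(1)$ -- all the sign cancellation in $V_N=\sum_j u_j q(Nx-j)$ is destroyed by the absolute value -- so it is not small in any $\HH^{-s}$ norm. Conversely, $u_N$ does oscillate in sign, but the free resolvent $R_0(\lambda)$ between $v_N$ and $u_N$ does not help: to exploit $\HH^{-2}$-smallness you need the factor $\rho R_0(\lambda)\rho$ to be squeezed between two copies of the oscillatory potential, which requires \emph{two} resolvent factors. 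The paper uses the asymmetric choice $K_{V_\#}(\lambda)=V_\# R_0(\lambda)\rho$ and bounds the \emph{square}: in $(V_\# R_0\rho)^2 = V_\#\cdot(\rho R_0\rho)\cdot V_\#\cdot(\rho R_0\rho)$, the middle $\rho R_0\rho$ maps $H^{-2}\to L^2$ and $L^2\to H^2$, which absorbs $|V_\#|_{\HH^{-2}}$ on one side and $|V_\#|_\infty$ on the other, yielding
\begin{equation*}
|(V_\# R_0(\lambda)\rho)^2|_\BB \leq \frac{C e^{c(\Im\lambda)_-}}{d-1+|\lambda|^2}\,|V_\#|_\infty\,|V_\#|_{\HH^{-2}}.
\end{equation*}
With the symmetrized $|V_N|^{1/2}$ factorization there is no analogous gain, and the claimed pointwise concentration $\Pp(\|K_N(\lambda)\|\geq 1/2)\leq Ce^{-cN^\gamma}$ is not justified.

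\textbf{The $\epsilon$-net and the high-energy confinement are both superfluous.} Once you adopt the paper's decomposition, the only random object is the scalar $|V_\#|_{\HH^{-2}}$, which concentrates below $N^{-\gamma/2}$ on a \emph{single} event of probability $1-Ce^{-cN^\gamma}$, with no dependence on $\lambda$. On this event the deterministic bound above is $<1/2$ \emph{simultaneously for all} $\lambda$ with $\Im\lambda\geq -A\ln N$ (for $A$ small compared to $\gamma/c$), including $|\lambda|\to\infty$ since the denominator $d-1+|\lambda|^2$ only helps at high frequency. So there is no need to first confine to $\Dd(0,R_N)$, no need to discretize $\lambda$, and no union bound -- those steps compensate for a problem (pointwise-in-$\lambda$ randomness) that does not arise with the correct decomposition. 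Your Step~1 (Theorem~1 on a fixed disk) is still needed in $d=1$ to localize the unique surviving resonance near $0$, and that part is fine.
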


By the same argument as in the remark below Theorem \ref{thm:1},  if $d \geq 3$, $q_0 \equiv 0$ and $q$ is real-valued then solutions $u : \R \times \R^d \rightarrow \C$ of \eqref{eq:1c} must locally decay like $N^{-At}$, with probability at least $1-O(e^{-cN^\gamma})$. 

We now investigate the speed of convergence of resonances of $V_N$ to resonances of $q_0$ in a generic case. The next statement requires some preparation. A resonance $\lambda_0$ of $q_0$ is said \textit{simple} if there exist two smooth complex-valued functions $f$, $g$ on $\R^d$ -- called resonant states -- such that
\begin{equation}\label{eq:3p}
\left(-\Delta_{\R^d} + q_0 - \lambda^2\right)^{-1} - i \dfrac{f \otimes g}{\lambda-\lambda_0} \ \ \text{is holomorphic near } \lambda_0.
\end{equation}
For instance, any non-zero resonance $\lambda_0$ of $q_0$ with geometric multiplicity equal to one is simple, see for instance \cite[Theorem 3.7]{DZ}.

If $\varphi \in C^\infty(\R^d,\C)$ has real-part $\varphi_1$ and imaginary part $\varphi_2$, we define $\Sigma[\varphi]$ as the $2 \times 2$ symmetric, nonnegative matrix
\begin{equation}\label{eq:1g}
\Sigma[\varphi] \de \int_{[-1,1]^d}\matrice{  \varphi_1(x)^2 & \varphi_1(x) \varphi_2(x) \\ \varphi_1(x) \varphi_2(x) & \varphi_2(x)^2} dx.
\end{equation}
If $\Sigma[\varphi]$ is non-degenerate, we say that a complex-valued sequence of random variables $Z_j$ converges in distribution to $\NN(0,\Sigma[\varphi])$ if the multivariate random variable $(\Re(Z_j),\Im(Z_j))$ converges in distribution to the multivariate normal distribution centered at $0$ with covariance matrix $\Sigma[\varphi]$. If $\Sigma[\varphi]$ is degenerate then
\begin{equation*}
\int_{\R^d} \varphi_1(x)^2 dx \int_{\R^d} \varphi_2(x)^2 dx - \left(\int_{\R^d} \varphi_1(x) \varphi_2(x) dx\right)^2 = 0.
\end{equation*}
Hence (if, say, $\varphi_2 \not\equiv 0$), there exists $\az \in \R$ such that $\varphi_1 = \az \varphi_2$. In this situation, we say that $Z_j$ converges in distribution to $\NN(0,\Sigma[\varphi])$ if the multivariate random variable $(\Re((1+i\az)^{-1}Z_j),\Im((1+i\az)^{-1}Z_j))$ converges in distribution to $\NN\left(0,\int_{[-1,1]^d} \varphi_1(x)^2 dx\right) \otimes \delta_0$. The definition for $\varphi_1 \not\equiv 0$ is analogous. 

We will distinguish the three following cases:
\begin{itemize}
\item \textbf{Case I:} $d=1$ or $3$ and $\int_{\R^d} q(x) dx \neq 0$;
\item \textbf{Case II:} $d=1$ and $\int_\R q(x) dx = 0$, $\int_\R x q(x) dx \neq 0$ and $(f \cdot g)' \not\equiv 0$ on $[-1,1]$;
\item \textbf{Case III:} all other cases.
\end{itemize}

\begin{theorem}\label{thm:2} Under the above notations, there exist $C, c > 0$ such that the following is satisfied. For every $N$, there exists $\lambda_N$ a complex-valued random variable, such that
\begin{equation*}
\Pp(\lambda_N \in \Res(V_N)) \geq 1 - Ce^{-cN^{1/4}}
\end{equation*}
and
\begin{itemize}
\item In Case I, 
\begin{equation*}
\dfrac{N^{d/2}(\lambda_N - \lambda_0)}{i\int_{\R^d} q(x) dx} \law \NN(0,\Sigma[fg]).
\end{equation*}
\item In Case II,
\begin{equation*}
\dfrac{N^{3/2}(\lambda_N - \lambda_0)}{i\int_{\R} xq(x) dx} \law \NN(0,\Sigma[(fg)']).
\end{equation*}
\item In Case III, 
\begin{equation*}
N^2(\lambda_N-\lambda_0) \ras \dfrac{i}{(2\pi)^d}\int_{\R^d} \dfrac{\hq(\xi) \hq(-\xi)}{|\xi|^2} d\xi \cdot \int_{[-1,1]^d} f(x)g(x) dx.
\end{equation*}
\end{itemize}
\end{theorem}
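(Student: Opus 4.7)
The plan is to combine a deterministic perturbative reduction with a probabilistic analysis of a scalar functional. Write $W_N := V_N - q_0 = \sum_j u_j q(N\cdot - j)$ and fix $\chi \in C_0^\infty(\R^d)$ equal to $1$ on a neighborhood of $\supp q_0 \cup \supp q$, so that $\chi V_N = V_N$ for every $N$. The standard Fredholm-determinant characterization identifies resonances of $V_N$ near $\lambda_0$ with zeros of $D_N(\lambda) := \det\bigl(I + \chi W_N R_{q_0}(\lambda) \chi\bigr)$. Substituting the simple-pole decomposition $R_{q_0}(\lambda) = i(\lambda - \lambda_0)^{-1} f \otimes g + H(\lambda)$, with $H$ holomorphic near $\lambda_0$, the rank-one nature of the residue factors the determinant cleanly and reduces $D_N(\lambda) = 0$ to the scalar equation
\begin{equation*}
(\lambda - \lambda_0) + i \int_{\R^d} W_N fg\, dx - i \iint H(\lambda_0)(x,y)\, W_N(x) W_N(y)\, f(y) g(x)\, dx\, dy + \mathcal{E}_N(\lambda) = 0,
\end{equation*}
where $\mathcal{E}_N(\lambda)$ gathers error terms of cubic or higher order in suitable norms of $W_N$. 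Sub-Gaussian concentration for $\|W_N\|_\infty$ together with a Hanson--Wright-type inequality for the relevant quadratic forms in the $u_j$ keeps $\mathcal{E}_N$ smaller than the leading terms with probability $1 - Ce^{-cN^{1/4}}$, after which Rouch\'e's theorem produces the unique zero $\lambda_N \in \Dd(\lambda_0, r)$.

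For the linear contribution, set $\varphi := fg$ and change variables to obtain
\begin{equation*}
\int W_N\, \varphi\, dx = N^{-d} \sum_{j \in [-N,N]^d} u_j \int q(y)\, \varphi\bigl((y+j)/N\bigr)\, dy.
\end{equation*}
Taylor expanding $\varphi$ around $j/N$ yields a leading term $N^{-d} \hq(0) \sum_j u_j \varphi(j/N)$, a sum of $O(N^d)$ bounded centered i.i.d.\ variables each of size $O(N^{-d})$. In Case I ($\hq(0)\neq 0$), recognizing $N^{-d}\sum_j \varphi(j/N)^2$ as a Riemann sum converging to $\int_{[-1,1]^d} \varphi^2\, dx$ and applying the Lindeberg CLT jointly to real and imaginary parts gives fluctuations of order $N^{-d/2}$ with the announced limit law; since $d \leq 3$, these dominate the deterministic $N^{-2}$ correction. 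In Case II the first term vanishes ($\hq(0)=0$) and the next Taylor coefficient $\hq'(0) = -i\int xq$ promotes $\varphi$ to $\varphi'$, producing $N^{-3/2}$ fluctuations in dimension one, which still dominate.

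In Case III the linear term is $o(N^{-2})$ and the quadratic term controls the limit. Since $\Ee[W_N(x) W_N(y)] = \sum_j q(Nx-j) q(Ny-j)$, its expectation is
\begin{equation*}
\sum_j \iint H(\lambda_0)(x,y)\, q(Nx-j) q(Ny-j)\, f(y)g(x)\, dx\, dy.
\end{equation*}
Rescaling around each lattice site $j/N$ and using Plancherel's theorem on the translation-invariant part of $H(\lambda_0)$ against $q\otimes q$ extracts, up to $o(1)$ corrections, a factor $N^{-2}(2\pi)^{-d}\int_{\R^d} \hq(\xi)\hq(-\xi)|\xi|^{-2}\,d\xi$ per $j$---the integral being convergent precisely under the Case III hypotheses. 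The remaining $j$-Riemann sum $N^{-d}\sum_j (fg)(j/N) \to \int_{[-1,1]^d} fg$ produces the announced formula, the factor $i$ coming from the sign of the quadratic term in the determinant expansion. Almost-sure (rather than in-probability) convergence follows from a second application of the Hanson--Wright inequality to the centered quadratic form, which yields exponential tails around the mean upgradable to almost-sure convergence via Borel--Cantelli.

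The main obstacle is the quantitative, uniform-in-$\lambda$ control of the error $\mathcal{E}_N$ with exponentially small failure probability: several linear and quadratic functionals of the $(u_j)$ must be controlled simultaneously, and a net argument in $\lambda$ exploiting holomorphy of $R_{q_0}$ on $\Dd(\lambda_0,2r)\setminus\{\lambda_0\}$ must be combined with these concentration bounds. The weakest of the concentration estimates is the Hanson--Wright-type inequality for the quadratic forms: it is sub-exponential rather than sub-Gaussian, and balancing its tails against the number of controlled events is what accounts for the specific exponent $N^{1/4}$ in the probability bound.
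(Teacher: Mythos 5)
Your proposal follows essentially the same strategy as the paper: reduce to a scalar characteristic equation near $\lambda_0$ by exploiting the rank-one pole of $R_{q_0}$ (Lemma \ref{lem:1c}), split the right-hand side into a linear term, a quadratic term, and higher-order remainders, apply a Lindeberg/Lyapunov CLT with Cram\'er--Wold to the linear term (Lemmas \ref{lem:1j}, \ref{lem:1e}, \ref{lem:1g}), extract the deterministic $N^{-2}$ contribution from the diagonal of the quadratic form by Plancherel and a Riemann sum (Lemma \ref{lem:1h}), control fluctuations by Hanson--Wright, and upgrade Case~III to almost-sure convergence via Borel--Cantelli. That is precisely the architecture the paper uses.

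The one place where the outline has a real gap is the control of $\mathcal{E}_N$. You invoke ``sub-Gaussian concentration for $\|W_N\|_\infty$,'' but $\|W_N\|_\infty$ is $\Theta(1)$ almost surely and does not become small; an estimate of the cubic-and-higher terms by powers of $\|W_N\|_\infty$ would not close. What actually makes the remainder negligible -- and what makes $I+\chi W_N H(\lambda)\chi$ invertible in the first place, so that your scalar reduction is even legitimate -- is that $W_N$ is small in the weak operator norm $\mathcal{H}^{-1}$ (equivalently, that $\langle D\rangle^{-1}W_N\langle D\rangle^{-1}$ is small on $L^2$). The paper's Lemma \ref{lem:1c}(i) bounds the $k$-th term of the expansion by $|W_N|_{\mathcal{H}^{-1}}^{k+1}$, and Lemma \ref{lem:1l} estimates $|W_N|_{\mathcal{H}^{-1}}$ via the quadratic form $|W_N|_{H^{-s}}^2$ and Hanson--Wright; it is exactly this step, not a union bound over events, that produces the exponent $N^{1/4}$. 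You should replace the $L^\infty$ control by this weak-norm control. A secondary, cosmetic point: $\chi W_N R_{q_0}(\lambda)\chi$ is not trace class when $d\geq 3$, so $\det(I+\chi W_N R_{q_0}(\lambda)\chi)$ must be regularized (the paper avoids the issue by working directly with the resolvent identity and a rank-one perturbation of the identity); since your reduction ultimately passes through the pole residue rather than the determinant itself, this affects only the framing. Also note the factor you describe as ``per $j$'' should be $N^{-d-2}$, summing over the $\sim N^d$ sites to give the stated $N^{-2}$.
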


When $q_0$ and $q$ are real-valued, the case of a resonance $\lambda_0 \in \Res(q_0) \cap i \R$ is of special interest. It allows to study eigenvalues of $-\Delta + q_0$: $\lambda_0 \in i(0,\infty) \cap \Res(q_0)$ if and only if $\lambda_0$ is an eigenvalue of $-\Delta + q_0$, see for instance \cite[p. 31]{DZ}. It also allows to observe the emergence of eigenvalues from the edge of the continuous spectrum. This phenomena was captured first for small  perturbations of $q_0$ in a pioneering work of Simon \cite{Si}, in dimension one and two. It was observed for highly oscillatory perturbations in \cite{BG,B,DW,DVW,Dr2,Dr3}, again in dimension one. When $q_0$ is real-valued and $\lambda_0 \in i\R$, we can pick $f = \overline{g}$ in \eqref{eq:3p} and we obtain a refinement of Theorem \ref{thm:2}, which in particular implies that eigenvalues might emerge from the edge of the continuous spectrum in dimension one:

\begin{cor}\label{cor:2} Assume that $q$, $q_0$ are real-valued and that $\lambda_0 \in i\R \cap \Res(q_0)$. Then there exist $C$, $c > 0$ such that the following is satisfied. For every $N$, there exists $\lambda_N$ a random variable with values in $i\R$ such that
\begin{equation*}
\Pp(\lambda_N \in \Res(V_N)) \geq 1 - Ce^{-cN^{1/4}}
\end{equation*}
and
\begin{itemize}
\item In Case I, 
\begin{equation*}
\dfrac{N^{d/2}(\lambda_N - \lambda_0)}{i \int_{\R^d} q(x) dx} \law \NN(0,\sigma^2), \ \ \ 
\sigma^2 \de \int_{[-1,1]^d} |f(x)|^4 dx.
\end{equation*}
\item In Case II,
\begin{equation*}
\dfrac{N^{3/2}(\lambda_N - \lambda_0)}{i\int_\R xq(x) dx} \law \NN(0,\sigma^2), \ \ \ 
\sigma^2 \de \int_{[-1,1]} \left(\left(|f|^2\right)'(x)\right)^2 dx.
\end{equation*}
\item In Case III, 
\begin{equation*}
N^2(\lambda_N-\lambda_0) \ras \dfrac{i}{(2\pi)^d} \int_{\R^d} \dfrac{|\hq(\xi)|^2}{|\xi|^2} d\xi \cdot \int_{[-1,1]^d} |f(x)|^2 dx.
\end{equation*}
\end{itemize}
If in addition $\lambda_0^2$ is an eigenvalue of $-\Delta+q_0$, then $\lambda_N^2$ is an eigenvalue with probability $1-Ce^{-cN^{1/4}}$.
\end{cor}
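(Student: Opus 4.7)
The plan is to derive Corollary \ref{cor:2} from Theorem \ref{thm:2} by exploiting the reality of $V_N$ and the resulting symmetry $\lambda \mapsto -\overline{\lambda}$ of the resonance set. The key idea is that under this symmetry the unique resonance near $\lambda_0 \in i\R$ must itself sit on the imaginary axis, which collapses the complex Gaussian of Theorem \ref{thm:2} to a real one.

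I would first show that the resonance $\lambda_N$ produced by Theorem \ref{thm:2} is purely imaginary. Because $q_0$ and $q$ are real-valued, so is $V_N$, and the Schwarz reflection principle applied to $R_{V_N}$ shows that $\Res(V_N)$ is invariant under $\lambda \mapsto -\overline{\lambda}$. Since $\lambda_0 \in i\R$ is simple, Theorem \ref{thm:1} guarantees, with probability $1 - C e^{-cN^\gamma}$, a single resonance of $V_N$ in a small disk centered at $\lambda_0$, and this disk is itself invariant under $\lambda \mapsto -\overline{\lambda}$. Uniqueness forces $\lambda_N = -\overline{\lambda_N}$, so $\lambda_N \in i\R$.

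Next I would specialize Theorem \ref{thm:2} by choosing $f = \overline{g}$ in \eqref{eq:3p}, a choice available when $q_0$ is real and $\lambda_0 \in i\R$. Then $fg = |f|^2$ is real-valued. In Case I, $\Sigma[fg]$ degenerates with only $(1,1)$-entry $\int_{[-1,1]^d} |f|^4 \, dx$ nonzero; moreover $N^{d/2}(\lambda_N - \lambda_0)/(i \int q)$ is real because $\lambda_N - \lambda_0 \in i\R$ and $\int q \in \R$. The paper's degenerate-case definition of $\NN(0, \Sigma[\varphi])$ then reduces to the one-dimensional Gaussian $\NN(0, \int |f|^4 \, dx)$. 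Case II is identical with $\varphi = (|f|^2)'$, and in Case III reality of $q$ gives $\hq(-\xi) = \overline{\hq(\xi)}$, simplifying the integrand to $|\hq(\xi)|^2$, while $fg = |f|^2$ yields the stated limit.

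For the eigenvalue statement, if $\lambda_0^2$ is an eigenvalue of $-\Delta + q_0$ then $\lambda_0 \in i(0, \infty)$; for $N$ large the localizing disk of Theorem \ref{thm:1} sits in the upper half-plane, so $\lambda_N \in i(0, \infty)$, and the standard correspondence between resonances in the open upper half-plane and negative eigenvalues of real Schr\"odinger operators identifies $\lambda_N^2$ as an eigenvalue of $-\Delta + V_N$. I expect no significant obstacle: the corollary is a direct specialization of Theorem \ref{thm:2} under the reflection symmetry, and the only minor point to verify is that the $\lambda_N$ constructed in Theorem \ref{thm:2} coincides with the unique resonance of $V_N$ in the disk of Theorem \ref{thm:1}, which is automatic given simplicity.
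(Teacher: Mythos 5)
Your proposal is correct and takes essentially the same route as the paper. You identify the two key ingredients in the same way: the reflection symmetry $\lambda \mapsto -\overline{\lambda}$ of $\Res(V_N)$ together with uniqueness of the resonance in $\Dd(\lambda_0,r_0)$ (on the good event) to force $\lambda_N \in i\R$, and the choice $g = \overline{f}$ which makes $fg = |f|^2$ real, collapsing $\Sigma[fg]$ (and $\Sigma[(fg)']$) to the degenerate form so that the paper's definition of $\NN(0,\Sigma[\varphi])$ reduces to a one-dimensional real Gaussian; the paper formalizes this last step by applying the continuous mapping theorem with the projection onto the real part, which is the same content as your remark that $N^{d/2}(\lambda_N-\lambda_0)/(i\int q)$ is already real. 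The Case III simplification via $\hq(-\xi)=\overline{\hq(\xi)}$ and the eigenvalue identification via resonances on $i(0,\infty)$ match the paper's argument as well.
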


As an example let us assume that $d=1$, $q_0 \equiv 0$ and $q$ real-valued. The potential $q_0$ has a single resonance $\lambda_0 = 0$, which is simple and has associated resonant/coresonant states $f = \overline{g} = 1/\sqrt{2}$, see \cite[(2.2.1)]{DZ}. Theorem \ref{thm:2} shows that $V_N$ is likely to have a resonance $\lambda_N$ near $0$, which in addition belongs to $i \R$. If $\int_\R q(x) dx \neq 0$, we fall in Case I: $\lambda_N$ is roughly at distance of order $N^{-1/2}$ from $0$; precisely,
\begin{equation*}
\dfrac{2N^{1/2} \lambda_N}{i \int_\R q(x) dx}  \law \NN(0,1).
\end{equation*}
We observe $\Im \lambda_N > 0$ with probability asymptotically equal to $1/2$: with probability roughly $1/2$, $\lambda_N^2$ is an eigenvalue that emerges from the edge of the continuous spectrum of $-\Delta_\R$. If in contrast $\int_\R q(x) dx = 0$, we fall in Case III and $\lambda_N$ is at distance of order $N^{-2}$ from $0$. Precisely, if $Q$ is the compactly supported antiderivative of $q$, then
\begin{equation*}
N^2 \lambda_N \ras \dfrac{i}{2}\int_{\R} Q(x)^2 dx.
\end{equation*}
In particular, $V_N$ is likely to have a unique eigenvalue $\lambda_N^2 \sim -\frac{1}{4N^4} \int_\R Q(x)^2 dx$, which emerges from the edge of the continuous spectrum of $-\Delta_\R$. This extends results previously known in the context of highly oscillatory potentials to the random case.

\subsection{Interpretation and comments} Theorem \ref{thm:1} involve the exponent $\gamma$, which depends on $d$ and $m$. This dependence comes from large deviations: when the sequence $\{u_j\}_{j \in \Z^d}$ takes unlikely values, the potential $V_N$ differs significantly from a purely oscillatory one. This slows down the speed of convergence of resonances of $V_N$ to resonances of $q_0$. Since the number of sites ($N^d$) grows with the dimension, large deviations are less likely in higher dimensional crystals. Their effect is reduced when $m$ is large (that is, $q$ contains few low frequencies), because in such cases $q$ is inherently oscillatory, independently of the values of the random sequence $\{u_j\}_{j \in \Z^d}$. This explains the dependence of $\gamma$ on $d$ and $m$. In \S\ref{app:2}, we show on an example that Theorem \ref{thm:0} does not hold if one does not remove an event of exponentially small probability.

The works of Borisov and Gadyl'shin \cite{BG,B}, Duch\^ene--Weinstein--Vuki\'cevi\'c \cite{DVW} and ourselves \cite{Dr2,Dr3} show that the typical distance between resonances of deterministic highly oscillatory potentials and of their weak limit is of order $N^{-2}$. This is due to constructive interference between oscillatory terms. This effect is still present here. However it does not always dominate: it is sometimes overcome by large deviations, see the three cases in Theorem \ref{thm:2}. Large deviations imply stochastic corrections that are of generally of order $N^{-d/2-m}$, where $m$ is the order of vanishing of $\hq(\xi)$ at $\xi=0$. This explains why the difference between resonances of $V_N$ and of $q_0$ becomes deterministic when $d/2+m$ gets larger than $2$. This also explains why the speed of convergence of resonances of $V_N$ is not larger than $N^2$, even when $d$ or $m$ are large.

In Case III of Theorem \ref{thm:2}, the analogy with \cite{DVW,Dr2} is at its strongest and we can derive an effective potential:
\begin{equation*}
V_\eff(x) \de q_0(x)+\dfrac{i}{(2\pi)^d N^2}\int_{\R^d} \dfrac{\hq(\xi) \hq(-\xi)}{|\xi|^2} d\xi \cdot \1_{[-1,1]^d}(x).
\end{equation*}
$V_\eff$ is a small perturbation of $q_0$ whose scattering properties are very close to those of $V_N$. Specifically, near any simple resonance $\lambda_0$ of $q_0$, $V_N^\eff$ has a unique resonance $\lambda_N^\eff$. Moreover, this resonance satisfies
\begin{equation*}
\dfrac{\lambda_N-\lambda_N^\eff}{\lambda_N-\lambda_0} \ras 0
\end{equation*}
-- see for instance \cite[Lemma 3.3]{Dr2}.
An effective potential in the above sense does not exist if $d/2+m < 2$. Indeed, Theorem \ref{thm:2} implies that such a potential would be given by a distribution 
\begin{equation*}
\dfrac{1}{N^d} \int_{\R^d} q(y) dy \cdot \sum_{j \in [-N,N]^d} u_j \delta_{j/N},
\end{equation*}
which does not belong to $L^\infty$. 

Theorems \ref{thm:1} shows that resonances of $V_N$ in compact sets are very close to resonances of $q_0$ for $N$ large (with probability exponentially close to $1$). The potential $V_\# = V_N-q_0$ is in general not pointwise small, but it is small when measured with respect to a weaker topology -- see Lemma \ref{lem:1d}. This allows to treat it as a singular perturbation of $q_0$, in an abstract framework due to Golowich--Weinstein \cite{GW}.

\subsection{Relation to existing work}

To the best of our knowledge, this is the first treatment of eigenvalues and resonances for random highly oscillatory Schr\"odinger operators. The closest work is possibly Klopp \cite{K}, who derived a semiclassical Weyl law for large one-dimensional discrete ergodic systems. The potentials considered there can be seen as a high amplitude version of the potentials considered here; specifically, after rescaling, Klopp's (discrete) potential takes the form
\begin{equation*}
N^2 \sum_j u_j q(Nx-j), \ x \in \Z,\ \ \Ee(u_j) = 0, \ \Ee(u_j^2) = 1, \ \ q \in C_0(\Z, \R).
\end{equation*}

For one-dimensional deterministic highly oscillatory potentials (HOPs), Borisov and Gadyl'shin \cite{BG,B} gave necessary and sufficient conditions for the existence of a bound state, emerging from the edge of the continuous spectrum. Duch\^ene--Vuki\'cevik--Weinstein \cite{DVW} derived an explicit formula for a small effective potential, created by the constructive interference of oscillatory terms. They also obtained precise asymptotic for the transmission coefficient. We developed new techniques in \cite{Dr2, Dr3} to extend their results to higher dimensions. We obtained a full expansion for eigenvalues and resonances of HOPs, and a refined formula for the effective potential, and logarithmic resonance-free regions when the weak limit vanishes. 

On a somewhat unrelated note, Duch\^ene--Raymond \cite{DR} obtained homogenization results for large HOPs in dimension one. Dimassi \cite{Di} and Dimassi--Duong \cite{DD} used the effective Hamiltonian method of G\'erard--Martinez--Sj\"ostrand \cite{GMS} to count resonances and eigenvalues of semiclassical rescaled HOPs in any dimension $d$. They obtained a nice Weyl law in the semiclassical limit, related to papers of Klopp \cite{K2,K} and Phong \cite{P1,P2}.

Our results are a form of of stochastic stability of scattering resonances; this reinforces the possibility of observing them in physical situations. A different singular perturbation of $q_0$ was studied by Zworski \cite{Zw}, who obtained resonances as viscosity limits. For expanding dynamical systems Baladi--Young \cite{BY} investigated the stochastic stability of resonances; this was put later in a general abstract framework due to Keller--Liverani \cite{KL}. Dyatlov--Zworski \cite{DZ2} and ourselves \cite{Dr4} investigated the stochastic stability of resonances of Anosov flows. Barr\'e--M\'etivier \cite{BM} showed related results in the context of the Vlasov equation. Finally, Sj\"ostrand \cite{Sj1,Sj2} obtained semiclassical Weyl laws for certain multiplicative random perturbations of the Laplacian on both compact manifolds and on $\R^d$.

\subsection{Strategy of proof} The rest of the paper is organized in three sections. In \S\ref{sec:2}, we use Banach spaces first $\HH^{-s}, \ s > 0$ introduced by Golowich--Weinstein \cite{GW}. In these spaces, highly oscillatory elements have small norms. We continue the Golowich--Weinstein perturbation theory with respect to $\HH^{-1}$ and $\HH^{-2}$; the main tool in our approach is analytic Fredholm theory rather than the implicit function theorem. In particular, we derive in Lemma \ref{lem:1c} an exact local characteristic equation for resonances.

In \S\ref{sec:3} we use the Hanson--Wright inequality to show that with highly probability, $V_N$ can be regarded as a small $\HH^{-2}$ perturbation of $q_0$. This allows to apply the results of \S\ref{sec:2}. Ideas related to \cite{Dr2,Dr3} quickly yield Theorems \ref{thm:1} and \ref{thm:0} in \S\ref{sec:4}. Theorem \ref{thm:2} requires more attention. Lemma \ref{lem:1c} shows that with high probability, resonances $\lambda$ of $V_N$ near a simple resonance $\lambda_0$ of $q_0$ must satisfy the equation
\begin{equation}\label{eq:1e}
\lambda = \lambda_0 + a_1(V_\#, \lambda) + a_2(V_\#,\lambda) + ..., \ \ \ \ V_\# \de V_N - q_0.
\end{equation}
 In \eqref{eq:1e}, the coefficients $a_k(V_\#,\lambda)$ are holomorphic functions of $\lambda$ that depend $k$-multilinearly on $V_\#$. For $k \geq 3$, they are shown to be negligible compared to $a_1(V_\#, \lambda) + a_2(V_\#,\lambda)$. The proof of Theorem \ref{thm:2} requires a precise evaluation of $a_1(V_\#,\lambda)$ and $a_2(V_\#,\lambda)$. We apply a sophisticated version of the central limit theorem to show that $N^{d/2+m} a_1(V_\#,\lambda)$ behaves asymptotically like a Gaussian. We estimate $a_2(V_\#,\lambda)$ using Fourier analysis arguments similar to those of \cite{Dr3}. Specifically, we show that $N^2 a_2(V_\#,\lambda)$ converges almost surely to a constant, induced by constructive interference between oscillatory terms. Theorem \ref{thm:2} follows from a comparison of $a_1(V_\#,\lambda)$ and $a_2(V_\#,\lambda)$, performed in \S\ref{sec:4}. \\

\noindent \textbf{Acknowledgment.} We would like to thanks Maciej Zworski and Michael Weinstein for valuable discussions. This research was supported by the NSF grant DMS-1500852.

\section{Deterministic tools}\label{sec:2}

Let $H^s$ be the standard scale of Sobolev spaces on $\R^d$. The functional framework relevant here is a scale of Banach space $\HH^{-s}$, introduced in Golowich--Weinstein \cite{GW}. The associated norm $|\cdot|_{\HH^{-s}}$ is defined on smooth functions $\VV \in C_0^\infty(\R^d,\C)$ as the $H^s \rightarrow H^{-s}$ operator norm of the multiplication operator by $\VV$:
\begin{equation}\label{eq:3s}
|\VV|_{\HH^{-s}} \de |\VV|_{H^s \rightarrow H^{-s}} = \sup_{f \in H^s} \dfrac{|\VV f|_{H^{-s}}}{|f|_{H^s}} = \left|\lr{D}^{-s} \VV \lr{D}^{-s}\right|_\BB
\end{equation}
-- where $|\cdot|_\BB$ is the operator norm for linear operators on $L^2$. The norm on the spaces $\HH^s$ are difficult to compute explicitly. The next lemma is a bilinear upper bound for norms of functions in $\HH^{-s}$.  It supports the idea that rapidly oscillating functions have small $\HH^{-s}$ norms.

\begin{lem}\label{lem:1f} Fix $s > 0$.
\begin{enumerate}
\item[$(i)$] If $s > d/2$, then there exists $C > 0$ such that for any $\VV \in C_0^\infty$, $|\VV|_{\HH^{-s}} \leq C|\VV|_{H^{-s}}$.
\item[$(ii)$] If $0 < s \leq d/2$, then for any $s' > d/2$, there exists $C > 0$ such that 
\begin{equation*}
\VV \in C_0^\infty \ \Rightarrow \ |\VV|_{\HH^{-s}} \leq C |\VV|_\infty^{1-s/s'} |\VV|_{H^{-s'}}^{s/s'},
\end{equation*}
\end{enumerate}
\end{lem}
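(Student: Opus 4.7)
The plan is to treat the two parts separately, with $(ii)$ reducing to $(i)$ via complex interpolation of the multiplication operator $T_\VV : f \mapsto \VV f$.

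For $(i)$, the key input is the Banach algebra structure of $H^s(\R^d)$ when $s > d/2$: there exists $C > 0$ with $|fg|_{H^s} \leq C |f|_{H^s} |g|_{H^s}$ for all $f, g \in H^s$, a classical consequence of the Sobolev embedding $H^s \hookrightarrow L^\infty$ combined with a Leibniz/paraproduct argument. Then for $\VV \in C_0^\infty$ and $f, g \in H^s$, the $H^{-s}$--$H^s$ duality pairing, which coincides with $\int \cdot$ on smooth functions, gives
\begin{equation*}
\left| \int_{\R^d} \VV\, f g \right| \leq |\VV|_{H^{-s}}\, |fg|_{H^s} \leq C\, |\VV|_{H^{-s}}\, |f|_{H^s}\, |g|_{H^s}.
\end{equation*}
Taking the supremum over unit $g \in H^s$ identifies the left-hand side with $|\VV f|_{H^{-s}}$, so $|\VV|_{\HH^{-s}} \leq C |\VV|_{H^{-s}}$ in view of the definition \eqref{eq:3s}.

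For $(ii)$, I would exploit two mapping properties of $T_\VV$ and interpolate. Trivially $T_\VV : L^2 \to L^2$ has operator norm equal to $|\VV|_\infty$. By $(i)$ applied at level $s' > d/2$, $T_\VV : H^{s'} \to H^{-s'}$ has operator norm at most $C |\VV|_{H^{-s'}}$. Complex interpolation on the Bessel potential scale gives $[L^2, H^{s'}]_\theta = H^{\theta s'}$ and $[L^2, H^{-s'}]_\theta = H^{-\theta s'}$ (both standard, via identification with weighted $L^2$ on the Fourier side through the multiplier $\lr{D}^{-\sigma}$). Choosing $\theta = s/s' \in (0,1)$ therefore yields
\begin{equation*}
|T_\VV|_{H^s \to H^{-s}} \leq |\VV|_\infty^{1-\theta} \bigl( C |\VV|_{H^{-s'}} \bigr)^{\theta} = C^{s/s'}\, |\VV|_\infty^{1-s/s'}\, |\VV|_{H^{-s'}}^{s/s'},
\end{equation*}
which is precisely the stated bound.

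The main obstacle is the bookkeeping for the complex interpolation step: one has to confirm that the compatible Banach couples $(L^2, H^{s'})$ and $(L^2, H^{-s'})$ interpolate to the asserted Bessel potential spaces, and that $T_\VV$ is admissible for this scale in the sense of Calder\'on. Once this standard input is accepted (e.g.\ from Bergh--L\"ofstr\"om), the conclusion is immediate, and the algebra estimate used in $(i)$ is equally routine. No nontrivial geometric or spectral information about $\VV$ is used beyond its smoothness and compact support.
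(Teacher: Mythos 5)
Your proposal is correct and follows essentially the same route as the paper: part $(i)$ via the Banach algebra property of $H^s$ plus duality, and part $(ii)$ via operator interpolation of $T_\VV$ between the endpoints $L^2\to L^2$ (norm $|\VV|_\infty$) and $H^{s'}\to H^{-s'}$ (norm $\lesssim |\VV|_{H^{-s'}}$ from $(i)$). The paper phrases the interpolation more compactly as $|\VV|_{\HH^{-s}}\leq |\VV|_{\HH^0}^{1-\theta}|\VV|_{\HH^{-s'}}^{\theta}$ with $s=\theta s'$, but this is exactly the estimate you derive.
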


\begin{proof} If $s > d/2$, then the Sobolev space $H^s$ is an algebra. Therefore, there exists $C>0$ such that for any $\VV$, $f$ in $H^s$, $|\VV f|_{H^s} \leq C|\VV|_{H^s}|f|_{H^s}$. The corresponding dual inequality reads $|\VV f|_{H^{-s}} \leq C |\VV|_{H^{-s}} |f|_{H^s}$. Part $(i)$ follows now from the definition \eqref{eq:3s}.

Assume now that $0 < s \leq d/2$. By interpolation theory, for any $s' > d/2$, $|\VV|_{\HH^{-s}} \leq |\VV|_{\HH^0}^{1-\te} |\VV|_{\HH^{-s'}}^\te$ where $s = \te s'$. The $\HH^0$-norm of $\VV$ is controlled by $|\VV|_\infty$, and the $\HH^{-s'}$ -norm of $\VV$ is controlled by $|\VV|_{H^{-s'}}$ because of $(i)$. This implies $(ii)$.
\end{proof}

For $\VV \in C_0^\infty(\R^d,\C)$ and $\rho \in C_0^\infty(\R^d,\R)$ equal to $1$ on $\supp(\VV)$, we define $K_\VV(\lambda) = \VV R_0(\lambda) \rho$. Lemma \ref{lem:1i}, \ref{lem:1b} and \ref{lem:1c} below, $\VV, \VV_0, \VV_1$ denote three smooth compactly supported functions with support in $[-M,M]^d$ and bounded uniformly by $M$; and the constants $C$ depend uniformly in $M$.

\begin{lem}\label{lem:1i} Let $K$ be a compact subset of $\C$, with $0 \notin \C$ if $d=1$. There exists $C > 0$ such that for all $\lambda \in K$, 
\begin{itemize}
\item[$(i)$] The operator $K_\rho(\lambda)$ maps $L^2$ to $H^2$ and $H^{-2}$ to $L^2$, with norm controlled by $C$.
\item[$(ii)$] The operator $K_\VV(\lambda)^d$ is trace-class, with trace-class norm bounded by $C$.
\end{itemize}
\end{lem}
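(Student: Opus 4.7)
The plan is to reduce both assertions to the meromorphic continuation of the free resolvent combined with a Schatten-class bound for the Sobolev embedding on a bounded cube.

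For part $(i)$, note that $K_\rho(\lambda) = \rho R_0(\lambda) \rho$. I would start from the upper half plane: for $\Im \lambda > 0$, the Fourier multiplier representation $R_0(\lambda) = (-\Delta - \lambda^2)^{-1}$ yields a bounded map $L^2 \to H^2$ with norm controlled by $\sup_{\xi \in \R^d} \lr{\xi}^2 ||\xi|^2 - \lambda^2|^{-1}$, and by duality a bounded map $H^{-2} \to L^2$. In odd dimension $d$, cutting off by $\rho$ on both sides produces a family $\rho R_0(\lambda) \rho$ that extends meromorphically from the upper half plane to all of $\C$ as an operator between these Sobolev pairs, with the only possible pole at $\lambda = 0$ when $d = 1$ and no pole at all when $d \geq 3$; this is the standard black-box continuation statement, see e.g.\ \cite[Theorem 3.6]{DZ}. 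Since $K$ is compact and, in dimension one, avoids $0$, the operator norms on both $L^2 \to H^2$ and $H^{-2} \to L^2$ are uniformly bounded on $K$, giving $(i)$.

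For part $(ii)$, the key input is a Weyl-type Schatten bound for the Sobolev embedding: if $\Omega \subset \R^d$ is a bounded cube, then the inclusion $j : H^2(\Omega) \hookrightarrow L^2(\Omega)$ has singular values decaying like $n^{-2/d}$, hence $j \in \mathcal{S}_p$ for every $p > d/2$. Now factor $K_\VV(\lambda) = \VV \, \rho R_0(\lambda) \rho$, using $\rho \VV = \VV$. By $(i)$, the map $\rho R_0(\lambda) \rho : L^2 \to H^2$ has norm bounded uniformly on $K$, and multiplication by the smooth compactly supported $\VV$ keeps the image in $H^2$-functions supported in $[-M,M]^d$; composing with the Schatten embedding $j$ above, $K_\VV(\lambda) \in \mathcal{S}_p$ for every $p > d/2$, with $\mathcal{S}_p$-norm uniform on $K$. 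H\"older's inequality for Schatten norms, $|A_1 \cdots A_d|_{\mathcal{S}_r} \leq \prod_k |A_k|_{\mathcal{S}_{p_k}}$ with $1/r = \sum_k 1/p_k$, applied with $p_1 = \cdots = p_d = d$ (valid since $d > d/2$ for every $d \geq 1$), then yields $K_\VV(\lambda)^d \in \mathcal{S}_1$ with trace-class norm uniformly bounded on $K$.

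The only delicate point is the uniformity over $K$, which rests entirely on the location of the poles of the meromorphically continued resolvent; everything else is routine. Since the continuation is pole-free away from $0$, and has no pole at all when $d \geq 3$, the exclusion $0 \notin K$ in the one-dimensional case is precisely what is needed for the uniform bounds to hold.
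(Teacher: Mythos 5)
The paper does not actually present a proof of this lemma; it simply cites \cite[Theorem 2.1 and Lemma 3.21]{DZ}. Your strategy of combining the meromorphic continuation of $\rho R_0(\lambda)\rho$ with a Schatten-class bound on the compactly supported Sobolev embedding is the standard route to this result, and part $(i)$ is fine as written.

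There is, however, a genuine gap in your handling of part $(ii)$, concerning the dependence of the constant on $\VV$. The preamble to Lemma \ref{lem:1i} specifies that $C$ may depend on $M$ — which controls $|\VV|_\infty$ and $\mathrm{diam}(\supp\VV)$ — but on nothing else about $\VV$. Your factorization of $K_\VV(\lambda)$ as $j \circ M_\VV \circ \bigl(\rho R_0(\lambda)\rho\bigr)$ inserts multiplication by $\VV$ as an operator $H^2 \to H^2$; by the Leibniz rule this operator norm is controlled by $|\VV|_{W^{2,\infty}}$, not by $|\VV|_\infty$. In the applications of this lemma (the proof of Lemma \ref{lem:1b} applied with $\VV_1 = V_\#$), the potential oscillates at scale $N^{-1}$, so $|\VV|_{W^{2,\infty}} \sim N^2 |\VV|_\infty$: your constant would grow with $N$ and the subsequent arguments would collapse. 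The fix is to reorder the factorization. Since $\rho R_0(\lambda)\rho$ already has image among $H^2$ functions supported in $\supp\rho$, insert the Schatten embedding $j : H^2_{\supp\rho} \hookrightarrow L^2$ \emph{before} multiplication by $\VV$, and treat $M_\VV$ as a bounded operator on $L^2$ with norm $|\VV|_\infty$. Concretely, $K_\VV(\lambda) = M_\VV \circ j \circ \bigl(\rho R_0(\lambda)\rho : L^2 \to H^2_{\supp\rho}\bigr)$ gives
\begin{equation*}
|K_\VV(\lambda)|_{\mathcal{S}_d} \leq |\VV|_\infty \cdot |j|_{\mathcal{S}_d} \cdot \sup_{\lambda \in K} |\rho R_0(\lambda)\rho|_{L^2 \to H^2},
\end{equation*}
which depends only on $M$, the cutoff $\rho$, and the compact set $K$, as required. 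With this reordering the rest of your argument — the Weyl-type singular value decay $s_n(j) \sim n^{-2/d}$, membership in $\mathcal{S}_p$ for $p > d/2$, and the Schatten H\"older inequality with $p_1 = \cdots = p_d = d$ — goes through unchanged.
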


For the proof, see \cite[Theorem 2.1 and Lemma 3.21]{DZ}. The next results study the stability of resonances for small perturbations in $\HH^{-s}$.

\begin{lem}\label{lem:1b} Let $R > 0$ such that $\VV_0$ has no resonances in $\p \Dd(0,R)$. There exist $\epsi_0, C > 0$ such that if $C|\VV_1|_{\HH^{-2}} \leq \epsi < \epsi_0$ then
\begin{equations*}
\Res(\VV_1) \cap \Dd(0,R) \subset \bigcup_{\lambda \in \Res(\VV_0) \cap \Dd(0,R)} \Dd\left(\lambda,\epsi^{1/m_\lambda}\right), \\
\lambda \in \Res(\VV_0) \cap \Dd(0,R) \ \Rightarrow \# \Dd(\lambda_0,\epsi^{1/m_\lambda}) \cap \Res(\VV_0+\VV_1) = m_\lambda.
\end{equations*}
\end{lem}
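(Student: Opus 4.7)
The plan is to apply analytic Fredholm theory together with Rouché's theorem at the level of a suitably regularized Fredholm determinant. Pick $\rho \in C_0^\infty(\R^d,\R)$ equal to $1$ on a neighborhood of $[-M,M]^d$. For any $\VV \in C_0^\infty([-M,M]^d)$ with $|\VV|_\infty \leq M$, standard analytic Fredholm theory -- compare the proof of \cite[Theorem 2.8]{DZ}, which makes crucial use of the trace-class bound from Lemma \ref{lem:1i}(ii) -- produces an entire function $D_\VV(\lambda)$ whose zeros coincide with $\Res(\VV)$ with matching multiplicities $m_\lambda$.

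The analytic heart of the argument is a uniform Lipschitz bound
$$\sup_{\lambda \in \overline{\Dd(0,R)}} \bigl|D_{\VV_0+\VV_1}(\lambda) - D_{\VV_0}(\lambda)\bigr| \leq C_1 |\VV_1|_{\HH^{-2}}.$$
To prove this, I would expand $K_{\VV_0+\VV_1}(\lambda)^d - K_{\VV_0}(\lambda)^d$ as a telescoping sum of $d$-fold products, each summand containing at least one factor $K_{\VV_1}(\lambda) = \VV_1 R_0(\lambda)\rho$. After inserting redundant cutoffs $\rho$, each such product contains an interior block $\rho R_0(\lambda)\rho \cdot \VV_1 \cdot \rho R_0(\lambda)\rho$: by Lemma \ref{lem:1i}(i) the outer factors are bounded $L^2 \to H^2$ and $H^{-2} \to L^2$, while by the very definition \eqref{eq:3s}, $\VV_1 : H^2 \to H^{-2}$ has norm $|\VV_1|_{\HH^{-2}}$. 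The remaining factors, via the Hilbert--Schmidt cycling used to prove Lemma \ref{lem:1i}(ii), upgrade this to a trace-class estimate of size $C_2 |\VV_1|_{\HH^{-2}}$; the Lipschitz property of the Fredholm determinant on bounded subsets of the trace class then yields the claim.

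With the Lipschitz bound in hand, Rouché concludes. Enumerate $\Res(\VV_0) \cap \Dd(0,R) = \{\lambda_0^{(j)}\}$ with multiplicities $m_j$. Near $\lambda_0^{(j)}$ write $D_{\VV_0}(\lambda) = (\lambda - \lambda_0^{(j)})^{m_j} g_j(\lambda)$ with $g_j$ holomorphic and non-vanishing at $\lambda_0^{(j)}$. For $\epsi < \epsi_0$ small, the disks $\Dd(\lambda_0^{(j)}, \epsi^{1/m_j})$ are pairwise disjoint and lie in $\Dd(0,R)$; on each boundary $|D_{\VV_0}(\lambda)| \geq c\,\epsi$, and on the complement of these disks in $\Dd(0,R)$ a compactness argument together with the assumption $\Res(\VV_0) \cap \p \Dd(0,R) = \emptyset$ gives a uniform lower bound $|D_{\VV_0}| \geq c'\,\epsi$. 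Choosing the constant $C$ in the lemma so that $C_1 |\VV_1|_{\HH^{-2}} < \tfrac{1}{2}\min(c, c')\,\epsi$, Rouché applied on each small disk yields exactly $m_j$ zeros of $D_{\VV_0+\VV_1}$ inside, and Rouché applied on the outer contour rules out zeros in the complement. Translating back from zeros of $D_{\VV_0+\VV_1}$ to resonances of $\VV_0+\VV_1$ delivers both conclusions.

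The main obstacle is the trace-class Lipschitz bound: one must see that the $\HH^{-2}$ norm of $\VV_1$ -- which only measures $\VV_1$ as an $H^2 \to H^{-2}$ operator -- is enough to control a Fredholm determinant. The mechanism is that $R_0(\lambda)$ gains exactly two derivatives on compactly supported data, precisely bridging the $H^{-2}$ and $H^2$ sides of $\VV_1$ so that no derivatives of $\VV_1$ enter the estimate; the remaining iterations of $R_0(\lambda)$ provide the trace-class regularity by the same mechanism that makes $K_\VV(\lambda)^d$ trace class in Lemma \ref{lem:1i}(ii).
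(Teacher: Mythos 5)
Your strategy---regularize the Fredholm determinant, prove a Lipschitz bound in $|\VV_1|_{\HH^{-2}}$, and run Rouch\'e twice (once on $\p\Dd(0,R)$, once on each small disk)---is exactly the architecture of the paper's proof, and the mechanism you identify for why $\HH^{-2}$ control suffices (sandwiching $\VV_1$ between $\rho R_0\rho: L^2\to H^2$ and $\rho R_0\rho: H^{-2}\to L^2$ so that $|\lr{D}^{-2}\VV_1\lr{D}^{-2}|_\BB$ appears) is precisely what the paper uses. But there is one genuine gap and one looser point worth flagging.

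The gap is the case $d=1$. Your plan opens with ``standard analytic Fredholm theory produces an \emph{entire} function $D_\VV(\lambda)$ whose zeros coincide with $\Res(\VV)$,'' and everything downstream depends on this. That statement fails in dimension one near $\lambda=0$: since $\trace K_\VV(\lambda) = \tfrac{i}{2\lambda}\int_\R\VV$, any determinant that regularizes by multiplying out an exponential of the trace (e.g.\ $\det_2 = \det((\Id+K_\VV)e^{-K_\VV})$, or the paper's $\det(\Id+\psi(K_\VV))$ with $\psi(z)=(1+z)e^{-z}$) acquires an essential singularity at $\lambda = 0$ whenever $\int_\R\VV \neq 0$; and $0 \in \Dd(0,R)$ is exactly where the one-dimensional free resolvent has its resonance. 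The paper handles this with a separate argument: it passes to the genuinely entire function $d_\VV(\lambda) = \lambda\det(\Id+K_\VV(\lambda))$, relates it to $D_\VV$ by $d_\VV(\lambda) = \lambda D_\VV(\lambda)\exp\!\big(\tfrac{i}{2\lambda}\int_\R\VV\big)$, proves the Lipschitz bound on $\p\Dd(0,R)$ only (where $D_\VV$ is still controlled), uses the maximum principle to propagate the bound inward, and needs the additional estimate $\big|\int_\R\VV_1\big| \lesssim |\VV_1|_{\HH^{-2}}$ to control the exponential factor. None of this is in your plan, and without it Rouch\'e cannot even be stated on the interior of $\Dd(0,R)$ when $d=1$.

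The looser point is the Lipschitz bound itself. Telescoping $K_{\VV_0+\VV_1}^d - K_{\VV_0}^d$ controls only the $d$-th power, whereas the regularized determinant is a nonlinear functional of $K$ built from \emph{all} powers $K^m$, $m\geq 2d+1$ (via $\psi(z) = \sum_{m\geq 2d+1} a_m z^m$). You would need to telescope $\psi(K_{\VV_0+\VV_1}) - \psi(K_{\VV_0})$ term by term and show the resulting series converges in trace class, using rapid decay of the $a_m$'s; alternatively, the paper sidesteps the resummation by writing $D_{\VV_0+\VV_1} - D_{\VV_0} = \int_0^1 \p_t D_{\VV_0+t\VV_1}\,dt$, expressing the $t$-derivative as a single trace, and using cyclicity of the trace together with the identity $(1+\psi(z))^{-1}\psi'(z) = (1+z)^{-1}z^{2d}$ to collapse the series to a closed-form trace-class expression. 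This is fixable in your framework but not as immediate as ``the Lipschitz property of the determinant on bounded subsets of the trace class'' suggests.

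One minor phrasing issue: ``Rouch\'e applied on the outer contour rules out zeros in the complement'' is not how Rouch\'e works (it counts zeros \emph{inside} a contour). What you want is the paper's double count: Rouch\'e on $\p\Dd(0,R)$ gives $\#\Res(\VV_0+\VV_1)\cap\Dd(0,R) = \#\Res(\VV_0)\cap\Dd(0,R)$, Rouch\'e on each $\p\Dd(\lambda_0^{(j)},\epsi^{1/m_j})$ gives exactly $m_j$ zeros inside, and since $\sum_j m_j$ equals the total count, there is no room for a zero outside the union of the small disks.
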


\begin{proof} The proof is based on a Fredholm determinant approach.  To simplify the notations, we write $K_\VV$ instead of $K_\VV(\lambda)$ in this proof. We first deal with the case $d \geq 3$ and explain the modifications needed for $d=1$ at the end.

For $d \geq 3$, let $\psi$ be the entire function given by
\begin{equation*}
 \psi(z) \de (1+z)\exp\left( -z + \dfrac{z^2}{2} - ... + \dfrac{(-z)^{2d}}{2d}\right).
\end{equation*}
We define the Fredholm determinant $D_\VV(\lambda) = \det(\Id + \psi(K_\VV))$. The function $D_\VV(\lambda)$ is entire; and since $R_0(\lambda)$ has no poles, the zeroes of $D_\VV(\lambda)$ are exactly the resonances of $\VV$ in $\C$, with their multiplicity -- see \cite[Theorem 5.4]{GLMZ}. We show below that
\begin{equation}\label{eq:3d}
\sup_{\lambda \in \Dd(0,R)} \left|D_{\VV_0+\VV_1}(\lambda) - D_{\VV_0}(\lambda)\right| \leq C \left|\VV_1\right|_{\HH^{-2}}.
\end{equation}

We first observe that
\begin{equations}\label{eq:2a}
D_{\VV_0+\VV_1}(\lambda) - D_{\VV_0}(\lambda) = \int_{t=0}^1 \p_t D_{\VV_0+t\VV_1}(\lambda) dt \\
 = \int_{t=0}^1 D_{\VV_0 + t \VV_1}(\lambda) \trace\left( (\Id + \psi(K_{\VV_0+t\VV_1}))^{-1} \p_t \psi(K_{\VV_0+t\VV_1})\right) dt.
\end{equations}
Since $\psi$ is an entire function with $\psi(z) = O(z^{2d+1})$ near $z=0$, we can write $\psi(z) = \sum_{m=2d+1}^\infty a_m z^m$. The convergence is uniform convergence for $z$ in bounded subsets of $\C$. Hence,
\begin{equation}\label{eq:2b}
\p_t \psi(K_{\VV_0+t\VV_1}) = \sum_{m=2d+1}^\infty a_m \dfrac{d}{dt} K_{\VV_0+t\VV_1}^m = \sum_{m=2d+1}^\infty a_m \sum_{j+\ell = m-1} K_{\VV_0+t\VV_1}^j K_{\VV_1} K_{\VV_0+t\VV_1}^\ell.
\end{equation}
The convergence is uniform in the space of trace-class operators: when $j + \ell \geq 2d$, Lemma \ref{lem:1i} implies that  either  $K_{\VV_0+t\VV_1}^j$ or $K_{\VV_0+t\VV_1}^\ell$ is trace-class and that
\begin{equation*}
|\p_t \psi(K_{\VV_0+t\VV_1})|_\LL \leq \sum_{m=2d+1}^\infty (m-1) |a_m| \cdot  C^m
\end{equation*}
-- where $|\cdot|_\LL$ denotes the trace-class norm. This series converges absolutely because $\psi$ is an entire function of order $2d$, therefore $\{|a_m|\}_{m \geq 0}$ converges rapidly to $0$ -- see for instance \cite[(4.7)]{Dr2} for a precise statement. The cyclicity of the trace implies that  
\begin{equation*}
\trace\left( (\Id + \psi(K_{\VV_0+t\VV_1}))^{-1} K_{\VV_0+t\VV_1}^j K_{\VV_1} K_{\VV_0+t\VV_1}^\ell\right) = \trace\left( (\Id + \psi(K_{\VV_0+t\VV_1}))^{-1} K_{\VV_0+t\VV_1}^{j+\ell} K_{\VV_1}\right). 
\end{equation*}
We combine this identity with \eqref{eq:2a} and \eqref{eq:2b} to get
\begin{equations*}
\trace\left( (\Id + \psi(K_{\VV_0+t\VV_1}))^{-1} \p_t \psi(K_{\VV_0+t\VV_1}) \right) = \sum_{m=2d+1}^\infty m a_m  \trace\left( (\Id + \psi(K_{\VV_0+t\VV_1}))^{-1} K_{\VV_0+t\VV_1}^{m-1} K_{\VV_1} \right) \\
 =  \trace\left( (\Id + \psi(K_{\VV_0+t\VV_1}))^{-1} \psi'(K_{\VV_0+t\VV_1}) K_{\VV_1} \right).
\end{equations*}
Since $(1+\psi(z))^{-1} \psi'(z) = (1+z)^{-1} z^{2d}$, we obtain
\begin{equations*}
D_{\VV_0+\VV_1}(\lambda) - D_{\VV_0}(\lambda) = 
 \int_{t=0}^1 D_{\VV_0 + t \VV_1}(\lambda) \trace\left( (\Id + K_{\VV_0+t\VV_1})^{-1} K_{\VV_0+t\VV_1}^{2d} K_{\VV_1}\right) dt \\
 = \int_{t=0}^1 D_{\VV_0 + t \VV_1}(\lambda) \trace\left( (\Id + K_{\VV_0+t\VV_1})^{-1} K_{\VV_0+t\VV_1}^{2d-1} K_{\VV_1} K_{\VV_0+t\VV_1}\right) dt.
\end{equations*}
Hence, $|D_{\VV_0+\VV_1}(\lambda) - D_{\VV_0}(\lambda)|$
\begin{equation}\label{eq:2t}
 \leq \sup_{t \in [0,1]} \left|D_{\VV_0 + t \VV_1}(\lambda) (\Id + K_{\VV_0+t\VV_1})^{-1}\right|_\BB \cdot |K_{\VV_0+t\VV_1}^{2d-1}|_\LL \cdot |K_{\VV_1} K_{\VV_0+t\VV_1}|_\BB.
\end{equation}
We show that the RHS of \eqref{eq:2t} is uniformly bounded for $\lambda \in K$. If $p$ is the polynomial such that $\psi(z) = (1+z) e^{p(z)}$, then
\begin{equation}\label{eq:3t}
D_{\VV_0 + t \VV_1}(\lambda) (\Id + K_{\VV_0+t\VV_1})^{-1} = D_{\VV_0 + t \VV_1}(\lambda) (\Id + \psi(K_{\VV_0+t\VV_1}))^{-1} \cdot e^{p(K_{\VV_0+t\VV_1})}.
\end{equation}
The first factor in the RHS of \eqref{eq:3t} is controlled by \cite[Appendix 5.1]{Dr2} while the second factor is uniformly bounded by Lemma \ref{lem:1i}. The term $|K_{\VV_0+t\VV_1}^{2d-1}|_\LL$ in the RHS of \eqref{eq:2t} is also uniformly bounded because of Lemma \ref{lem:1i}; finally, $|K_{\VV_1} K_{\VV_0+t\VV_1}|_\BB$ is controlled as follows:
\begin{equation*}
|K_{\VV_1} K_{\VV_0+t\VV_1}|_\BB \leq |K_\rho \lr{D}^2|_\BB \cdot |\lr{D}^{-2} \VV_1 \lr{D}^{-2}|_\BB \cdot |\lr{D}^2 K_{\VV_0+t\VV_1}|_\BB \leq C |\lr{D}^{-2} \VV_1 \lr{D}^{-2}|_\BB,
\end{equation*}
where the boundedness of $K_\rho \lr{D}^2$ and $\lr{D}^2 K_\rho$ follow from Lemma \ref{lem:1i}. This shows \eqref{eq:3d}.

The Fredholm determinant $D_{\VV_0}(\lambda)$ has no zeroes on $\p \Dd(0,R)$, hence there exists $t > 0$ such that
$|D_{\VV_0}(\lambda)| > t$ for $\lambda \in \p \Dd(0,R)$. Hence, if $C|\VV_1|_{\HH^{-2}} \leq \epsi$,
\begin{equation}\label{eq:3e}
\lambda \in \Dd(0,R) \ \Rightarrow |D_{\VV_0+\VV_1}(\lambda)-D_{\VV_0}(\lambda)| \leq \epsi.
\end{equation}
If $\epsi$ is sufficiently small, the RHS is bounded by $t$. Rouch\'e's theorem implies that
\begin{equation}\label{eq:2p}
C|\VV_1|_{\HH^{-2}} \leq \epsi \ \Rightarrow 
\# \Res(\VV_1 + \VV_0) \cap \Dd(0,R) = \# \Res(\VV_0) \cap \Dd(0,R).
\end{equation}
Let $\lambda_0 \in \Dd(0,R)$ be a resonance of $\VV_0$ with geometric multiplicity $m_{\lambda_0}$. We show that $\VV_0+\VV_1$ has exactly $m$ resonances in the disk $\Dd(\lambda_0, \epsi^{1/m_{\lambda_0}})$ for $C$ sufficiently large, and $\epsi$ sufficiently small. There exists $r_0 > 0$ such that $\lambda_0$ is the only zero of $D_{\VV_0}(\lambda)$ on $\Dd(\lambda_0,r_0)$. Hence, $|D_{\VV_0}(\lambda)| > c_0|\lambda-\lambda_0|^{m_{\lambda_0}}$ for $c_0$ sufficiently small and $\lambda \in \Dd(\lambda_0,r_0)$. Because of this and \eqref{eq:3e}, after possibly increasing the value of $C$,
\begin{equation*}
\lambda \in \p \Dd(0,\epsi^{1/m_{\lambda_0}}) \ \Rightarrow \ |D_{\VV_0+\VV_1}(\lambda)-D_{\VV_0}(\lambda)| < |D_{\VV_0}(\lambda)|.
\end{equation*}
Again, Rouch\'e's theorem implies that $\VV_0+\VV_1$ and $\VV_0$ have the same number of zeroes in $\Dd(\lambda_0,\epsi^{1/m_{\lambda_0}})$ -- i.e. $m_{\lambda_0}$. This fact, combined with \eqref{eq:2p}, implies that all resonances of $\VV_0+\VV_1$ in $\Dd(0,R)$ are confined in 
\begin{equation*}
\bigcup_{\lambda_0 \in \Res(\VV_0)} \Dd(\lambda_0,\epsi^{1/m_{\lambda_0}}).
\end{equation*}
This concludes the proof of the lemma for $d \geq 3$.

When $d=1$, the estimate \eqref{eq:3e} holds uniformly locally on $\Dd(0,R) \setminus 0$; however, unless $\int_\R \VV(x) dx = 0$, the function $D_\VV(\lambda)$ has an essential singularity at $\lambda=0$. We introduce
\begin{equation*}
d_\VV(\lambda) \de \lambda \det(\Id + K_\VV(\lambda)),
\end{equation*}
which is an entire function of $\lambda$, and whose zeroes are exactly the resonances of $\VV$ counted with multiplicity -- see \cite[Theorem 2.6]{DZ}. Since $\trace(K_\VV) = \frac{i}{2\lambda} \int_\R \VV$ (see \cite[(2.2.1)]{DZ}) and $\psi(z) = (1+z)e^{-z}$,
\begin{equation*}
D_\VV(\lambda) = \det((\Id + K_\VV)e^{-K_\VV}) = \det(\Id + K_\VV) \cdot e^{-\trace(K_\VV)} = \lambda^{-1} d_\VV(\lambda) \exp\left( - \dfrac{i}{2\lambda} \int_\R \VV \right).
\end{equation*}
It follows that
\begin{equation*}
d_\VV(\lambda)  = \lambda D_\VV(\lambda)\exp\left( \dfrac{i}{2\lambda} \int_\R \VV \right).
\end{equation*}
Hence, to deal with $d=1$, it suffices to replace $D_\VV$ by $d_\VV$ and essentially show
\begin{equation}\label{eq:3f}
\lambda \in \Dd(0,R) \ \Rightarrow |d_{\VV_0+\VV_1}(\lambda)-d_{\VV_0}(\lambda)| \leq C \epsi.
\end{equation}
By the maximum principle, \eqref{eq:3f} holds on $\Dd(0,R)$ if it holds on $\p \Dd(0,R)$. The estimate \eqref{eq:3e} works when $d=1$ and $\lambda$ is away from $0$ -- for instance $\lambda \in \p \Dd(0,R)$. Hence, \eqref{eq:3f} holds if we can show 
\begin{equation*}
\left|\int_\R \VV_0+\VV_1 - \int_\R \VV_0 \right| = O(\epsi).
\end{equation*}
This follows from the condition $|\VV_1|_{\HH^{-2}} \leq \epsi$: if $\rho \in C_0^\infty$ is $1$ on $\supp(\VV)$, 
\begin{equation*}
\left|\int_\R \VV_0+\VV_1 - \int \VV_0 \right| = \left| \int_\R \VV_1 \rho^2 \right| \leq |\VV_1 \rho|_{H^{-2}} |\rho|_{H^2} \leq |\VV_1|_{\HH^{-2}} |\rho|_{H^2}^2 = O(\epsi).
\end{equation*}
This concludes the proof for $d=1$.
\end{proof}

Fix $\lambda_0 \in \C$ and assume that $\lambda_0 \in \Res(\VV_0)$ is simple: there exist $f, g \in C^\infty(\R^d)$ and a holomorphic family of operators $L^{\lambda_0}_{\VV_0}(\lambda)$ near $\lambda_0$ such that
\begin{equation}\label{eq:1q}
R_{\VV_0}(\lambda) = L^{\lambda_0}_{\VV_0}(\lambda) + i\dfrac{f \otimes g}{\lambda-\lambda_0}. 
\end{equation}
Under this assumption on $\lambda_0$, and assuming that $\VV_1$ is small in $\HH^{-2}$, we can write a local characteristic equation for resonances.

\begin{lem}\label{lem:1c} Under the notations of \eqref{eq:1q}, there exist $r_0, \delta_0$ and $C$ all positive such that if $|\VV_1|_{\HH^{-2}} \leq \delta_0$, then:
\begin{itemize}
\item[$(i)$] For any $\kappa \geq 0$,
\begin{equation}\label{eq:1d}
\lambda \in \Dd(0,R) \ \Rightarrow \ \sum_{k = \kappa}^\infty \left|\blr{(\VV_1 L^{\lambda_0}_{\VV_0}(\lambda) \rho)^k \VV_1 f, g}\right| \leq C |\VV_1|^{\kappa+1}_{\HH^{-1}}.
\end{equation}
\item[$(ii)$] The potential $\VV_0+\VV_1$ has a unique resonance $\lambda_1$ in $\Dd(\lambda_0,r_0)$.
\item[$(iii)$] If $\varphi : \Dd(\lambda_0,r_0) \rightarrow \C$ is the holomorphic function given by
\begin{equation}\label{eq:3q}
\varphi(\lambda) = \lambda - \lambda_0 + i \sum_{k = 0}^\infty (-1)^k  \blr{ (\VV_1 L^{\lambda_0}_{\VV_0}(\lambda) \rho)^k \VV_1 f , \overline{g}}
\end{equation}
then $\varphi$ has a unique zero $\lambda_2$ in $\Dd(\lambda_0,r_0)$. In addition, $\lambda_2 = \lambda_1$.
\end{itemize}
\end{lem}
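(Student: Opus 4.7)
I plan to establish (i) first since the resulting geometric smallness drives Neumann series arguments in both (ii) and (iii). The heart of (i) is that $L^{\lambda_0}_{\VV_0}(\lambda)\rho$ is a uniformly bounded family $H^{-1}\to H^1$ on a small disk around $\lambda_0$: by Lemma \ref{lem:1i} and the second resolvent identity, $R_{\VV_0}(\lambda)\rho$ maps $L^2\to H^2$, and by duality (the resolvent kernel is symmetric) also $H^{-2}\to L^2$; interpolation yields $H^{-1}\to H^1$, and subtracting the rank-one pole preserves this on the disk. Multiplication by $\VV_1$ acts as $H^1\to H^{-1}$ with norm $|\VV_1|_{\HH^{-1}}$, hence $L^{\lambda_0}_{\VV_0}(\lambda)\rho\,\VV_1:H^1\to H^1$ with norm $\leq C|\VV_1|_{\HH^{-1}}$. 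Interpolation of the $\HH^{-s}$ scale combined with the uniform bound $|\VV_1|_\infty\leq M$ gives $|\VV_1|_{\HH^{-1}}\leq C\delta_0^{1/2}$, contractive for $\delta_0$ small. Rewriting $(\VV_1 L^{\lambda_0}_{\VV_0}\rho)^k\VV_1 f=\VV_1(L^{\lambda_0}_{\VV_0}\rho\,\VV_1)^k f$ and pairing with $g$ via $|\lr{h,g}|\leq|h|_{H^{-1}}|g|_{H^1}$, each term is bounded by $C^k|\VV_1|_{\HH^{-1}}^{k+1}$; summing the geometric tail from $k=\kappa$ yields the claimed estimate.

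Part (ii) is then immediate from Lemma \ref{lem:1b}: the rank-one structure in \eqref{eq:1q} forces the geometric multiplicity $m_{\lambda_0}=1$, so for any fixed $R>|\lambda_0|$ on whose boundary $\VV_0$ has no resonance, Lemma \ref{lem:1b} produces exactly one resonance of $\VV_0+\VV_1$ in $\Dd(\lambda_0,\epsi)$ for $\epsi$ small; I would set $r_0=\epsi$ once and for all.

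For (iii), my strategy is to derive a Grushin/Schur-type scalar characteristic equation. Starting from $R_{\VV_0+\VV_1}=(\Id+R_{\VV_0}(\lambda)\VV_1)^{-1}R_{\VV_0}(\lambda)$, I would split off the pole,
\[
\Id+R_{\VV_0}(\lambda)\VV_1 \;=\; \bigl(\Id+L^{\lambda_0}_{\VV_0}(\lambda)\rho\,\VV_1\bigr)+\frac{i}{\lambda-\lambda_0}\,f\otimes(\VV_1 g),
\]
invert the first summand by Neumann series (justified by (i)), and observe that the remaining rank-one perturbation of $\Id$ is non-invertible precisely when $\lambda-\lambda_0+i\blr{\VV_1 g,(\Id+L^{\lambda_0}_{\VV_0}\rho\,\VV_1)^{-1}f}=0$. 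Expanding the inverse as a Neumann series and reorganizing via $\blr{\VV_1 g,(L^{\lambda_0}_{\VV_0}\rho\,\VV_1)^k f}=\blr{(\VV_1 L^{\lambda_0}_{\VV_0}\rho)^k\VV_1 f,\overline{g}}$ (the $\overline{g}$ arising from the sesquilinear convention when $\VV_0$ is complex) reproduces $\varphi(\lambda)=0$ exactly. Part (i) with $\kappa=0$ shows $\varphi$ is holomorphic on $\Dd(\lambda_0,r_0)$ with $|\varphi(\lambda)-(\lambda-\lambda_0)|\leq C|\VV_1|_{\HH^{-1}}<r_0$ on $\p\Dd(\lambda_0,r_0)$ after shrinking $\delta_0$; Rouché's theorem then gives a unique zero $\lambda_2$, which must coincide with the resonance $\lambda_1$ from (ii) by the equivalence just established.

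The main obstacle I anticipate is the rank-one factorization in (iii): making the Schur-complement manipulation rigorous requires identifying a single space (cutoff $H^1$ is the natural candidate) on which $R_{\VV_0}$, $L^{\lambda_0}_{\VV_0}\rho$, and multiplication by $\VV_1$ all compose compatibly, and keeping careful track of the bilinear-versus-sesquilinear pairing that distinguishes $g$ from $\overline{g}$ in \eqref{eq:3q}---particularly delicate when $\VV_0$ is complex-valued so that the resonant and coresonant states $f$ and $g$ are genuinely independent.
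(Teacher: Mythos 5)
Your proposal follows essentially the same route as the paper: establish the geometric bound in (i) via mapping properties of $L^{\lambda_0}_{\VV_0}(\lambda)\rho$ on the scale $H^{-1}\to H^1$ conjugated by $\lr{D}^{-1}$, deduce (ii) from Lemma~\ref{lem:1b}, and obtain (iii) by peeling off the rank-one pole from the Lippman--Schwinger operator, inverting the regular part by a Neumann series (justified by (i)), and reading off the resulting scalar characteristic equation, then applying Rouch\'e. The paper uses the ``right-side'' factorization $R_{\VV_0+\VV_1}=R_{\VV_0}(\Id+\VV_1 R_{\VV_0}\rho)^{-1}(\Id-\VV_1 R_{\VV_0}(1-\rho))$ and verifies it in detail (including the meromorphic continuation and preservation of compactly supported data by $(\Id+\VV_1 R_{\VV_0}\rho)^{-1}$), whereas you use the left analogue with $\Id+R_{\VV_0}\VV_1$; this is a cosmetic difference.

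There are two places where you elide a point the paper addresses with care, and which you should fill in.

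First, in (i) your argument for the uniform $H^{-1}\to H^1$ bound on $\rho L^{\lambda_0}_{\VV_0}(\lambda)\rho$ over the whole disk is the phrase ``subtracting the rank-one pole preserves this on the disk.'' Taken literally this does not work: $R_{\VV_0}(\lambda)\rho$ is unbounded as $\lambda\to\lambda_0$, and the term $i(f\otimes g)/(\lambda-\lambda_0)$ also blows up, so neither summand is uniformly bounded even though their difference is. The paper instead writes
\[
\rho L^{\lambda_0}_{\VV_0}(\lambda)\rho=\frac{1}{2\pi i}\oint_{\p\Dd(\lambda_0,r_0)}\frac{\rho R_{\VV_0}(\mu)\rho}{\mu-\lambda}\,d\mu,
\]
valid because the pole term integrates to zero over the circle. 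On $\p\Dd(\lambda_0,r_0)$ the operator $\rho R_{\VV_0}(\mu)\rho$ is holomorphic and uniformly bounded $L^2\to H^2$ (and by duality/interpolation $H^{-1}\to H^1$), so the integral is too. Some argument of this kind (Cauchy formula, or holomorphy plus Banach--Steinhaus) is needed to cross the pole.

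Second, your conclusion ``$\lambda_2=\lambda_1$ by the equivalence just established'' is a little too fast. The Schur reduction establishes ``$\lambda$ is a resonance of $\VV_0+\VV_1$ iff $\varphi(\lambda)=0$'' only on the \emph{punctured} disk $\Dd(\lambda_0,r_0)\setminus\{\lambda_0\}$, because the pole is explicitly divided out. If the unique resonance $\lambda_1$ from (ii) happens to equal $\lambda_0$, the equivalence says nothing directly. The paper closes this case with a short contradiction: if $\lambda_1=\lambda_0$ but $\lambda_2\neq\lambda_0$, then by the punctured-disk equivalence (run backwards) $\lambda_2$ would be a second resonance of $\VV_0+\VV_1$ in $\Dd(\lambda_0,r_0)$, contradicting uniqueness from (ii). You should include this step.
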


\begin{proof} Start with $(i)$. Let $r_0 > 0$ such that $\VV_0$ has no resonances but $\lambda_0$ in $\Dd(\lambda_0,r_0)$. Below we work with $\lambda \in \Dd(\lambda_0,r_0)$. We first check that $\rho L^{\lambda_0}_{\VV_0}(\lambda) \rho$ maps $H^{-1}$ to $H^1$. We have:
\begin{equation}\label{eq:2c}
\rho L^{\lambda_0}_{\VV_0}(\lambda) \rho = \oint_{\p \Dd(\lambda_0,r_0)} \dfrac{\rho R_{\VV_0}(\mu) \rho}{\mu-\lambda} d\mu.
\end{equation}
This comes from the Cauchy formula applied to $L^{\lambda_0}_{\VV_0}(\lambda) = R_{\VV_0}(\lambda) - i\frac{f \otimes g}{\lambda - \lambda_0}$ and the identity
\begin{equation*}
\oint_{\p \Dd(\lambda_0,r_0)} \dfrac{i f \otimes g }{(\mu-\lambda)(\mu-\lambda_0)} d\mu
%=\dfrac{1}{\lambda_0-\lambda} \oint_{\lambda_0} \dfrac{d\mu}{\mu-\lambda_0}  - \dfrac{1}{\lambda_0-\lambda} \oint_{\lambda_0} \dfrac{d\mu}{\mu-\lambda} d\mu 
= 0.
\end{equation*}
When $\mu \in \p\Dd(\lambda_0,r_0)$, the operator $\rho R_{\VV_0}(\mu) \rho$ maps $L^2$ to $H^2$, see \cite[Theorem 2.2]{DZ}. Since \eqref{eq:2c} expresses $\rho L^{\lambda_0}_{\VV_0}(\lambda) \rho$ as the integral of a smoothly varying $L^2 \rightarrow H^2$ operator-valued function over a circle, the operator $\rho L^{\lambda_0}_{\VV_0}(\lambda) \rho$  is itself bounded from $L^2$ to $H^2$. Using a duality argument, it also maps $H^{-2}$ to $H^2$, and (by interpolation) $H^{-1}$ to $H^1$. This shows that for $\lambda \in \Dd(\lambda_0,r_0)$ the operator $B(\lambda) = \lr{D} \rho L^{\lambda_0}_{\VV_0}(\lambda) \rho \lr{D}$ is bounded on $L^2$. As operators on $L^2$,
\begin{equation}\label{eq:3h}
\lr{D}^{-1}(\VV_1 L^{\lambda_0}_{\VV_0}(\lambda) \rho)^k \VV_1 \lr{D}^{-1} = \left(\lr{D}^{-1} \VV_1 \lr{D}^{-1} B(\lambda)\right)^k \lr{D}^{-1} \VV_1 \lr{D}^{-1}.
\end{equation}
Since $f$ and $g$ are locally in $H^1$, we can use \eqref{eq:3h} to obtain
\begin{equations*}
\left|\blr{(\VV_1 L^{\lambda_0}_{\VV_0}(\lambda) \rho)^k \VV_1 f,g } \right| = \left|\blr{\lr{D}^{-1}(\VV_1 L^{\lambda_0}_{\VV_0}(\lambda) \rho)^k \VV_1 \lr{D}^{-1} \cdot \lr{D}\rho f, \lr{D} \rho g } \right| \\ \leq 
 C^{k+1} |\lr{D}^{-1} \VV_1 \lr{D}^{-1}|^{k+1} |\rho f|_{H^1} |\rho g|_{H^1} \leq C^{k+1} |\VV_1|_{\HH^{-1}}^{k+1}.
\end{equations*}
If $|\VV_1|_{\HH^{-2}} \leq \delta_0^2$ with $\delta_0 \leq 1/(2C)$, then $|\VV_1|_{\HH^{-1}} \leq 1/(2C)$ and we can sum the above inequality over $k$. This yields the bound \eqref{eq:1d}, thus part $(i)$.

Part $(ii)$ is an immediate consequence of Lemma \ref{lem:1b} -- possibly after reducing the value of $\delta_0$. We now prove part $(iii)$. We first show that if $|\VV_1|_{\HH^{-1}}$ is sufficiently small, the function $\varphi$ defined by \eqref{eq:3q} has a unique zero in $\Dd(\lambda_0,r_0)$. If $\varphi_0(\lambda) = \lambda-\lambda_0$ and $\delta_0$ is sufficiently small compared to $r_0$,
\begin{equation*}
\sup_{\p \Dd(\lambda_0} |\varphi-\varphi_0| \leq C \delta_0 < r_0 = \inf_{\p \Dd(\lambda_0,r_0)} |\varphi_0|
\end{equation*}
hence Rouch\'e's theorem applies and shows that $\varphi$ and $\varphi_0$ have the same number of zeros in $\Dd(\lambda_0,r_0)$ -- i.e. exactly one, denoted by $\lambda_2$. We now investigate the relation between the resonance $\lambda_1$ of $\VV_0+\VV_1$ and the zero $\lambda_2$ of $\varphi$. 

The first step is a relative Lippman–-Schwinger formula:
\begin{equation}\label{eq:0d}
R_{\VV_0+\VV_1}(\lambda) = R_{\VV_0}(\lambda) \left( \Id + \VV_1 R_{\VV_0}(\lambda) \rho \right)^{-1} \left( \Id - \VV_1 R_{\VV_0}(\lambda) (1-\rho) \right)
\end{equation}
(the standard Lippman–-Schwinger formula is \eqref{eq:0d} with $\VV_0 \equiv 0$, see \cite[(2.2.8)]{DZ}). When $\Im \lambda \gg 1$, we can write
\begin{equation}\label{eq:0e}
\VV_1 R_{\VV_0}(\lambda) \rho = \VV_1 R_0(\lambda) (\Id + \VV_0 R_0(\lambda))^{-1} \rho.
\end{equation}
The operator $-\Delta_{\R^d}$ has absolutely continuous spectrum equal to $[0,\infty)$. Hence the operator $R_0(\lambda)$ satisfies $|R_0(\lambda)|_\BB \leq |\lambda|^{-1}$ when $\Im \lambda \geq 1$. In particular, for $\Im \lambda \geq 1$, $\Id + \VV_0 R_0(\lambda)$ is invertible by a Neumann series and the norm of its inverse is smaller than $2$. The bound on $|R_0(\lambda)|_\BB$ and \eqref{eq:0e} imply that for $\Im \lambda$ large enough, $\VV_1 R_0(\lambda) \rho$ is bounded on $L^2$ with norm smaller than $1/2$. We deduce that $\Id + \VV_1 R_{\VV_0}(\lambda) \rho$ is invertible by a Neumann series; we use this representation of the inverse to verify \eqref{eq:0d} when $\Im \lambda$ is large:
\begin{equations*}
R_{\VV_0}(\lambda) \left( \Id + \VV_1 R_{\VV_0}(\lambda) \rho \right)^{-1} \left( \Id - \VV_1 R_{\VV_0}(\lambda) (1-\rho) \right) \\
 = R_{\VV_0}(\lambda) \sum_{k=0}^\infty (- \VV_1 R_{\VV_0}(\lambda) \rho )^k \left( \Id - \VV_1 R_{\VV_0}(\lambda) (1-\rho) \right)\\
 = R_{\VV_0}(\lambda) \left(\sum_{k=0}^\infty (- \VV_1 R_{\VV_0}(\lambda) \rho )^k - \sum_{k=0}^\infty (- \VV_1 R_{\VV_0}(\lambda) \rho)^k \VV_1 R_{\VV_0}(\lambda) (1-\rho)\right) \\
 = R_{\VV_0}(\lambda) \left(\sum_{k=0}^\infty (- \VV_1 R_{\VV_0}(\lambda) \rho )^k + \sum_{k=0}^\infty (- \VV_1 R_{\VV_0}(\lambda))^{k+1} (1-\rho)  \right) \\
 = R_{\VV_0}(\lambda) \left(\Id + \sum_{k=0}^\infty (- \VV_1 R_{\VV_0}(\lambda))^{k+1} \right) = R_{\VV_0}(\lambda) (\Id + \VV_1 R_{\VV_0}(\lambda))^{-1} = R_{\VV_0+\VV_1}(\lambda).
\end{equations*}
This identity extends meromorphically for all $\lambda \in \C$. We only need to check that $(\Id + \VV_1 R_{\VV_0}(\lambda)\rho)^{-1}$ preserves the class of functions with compact support: this is immediate for $\Im \lambda \gg 1$ thanks to the Neumann series representation; and it extends to all $\lambda$ by the unique continuation principle. This implies \eqref{eq:0d}.

Now assume that $r_0$ is sufficiently small so that $\lambda_0$ is the unique resonance of $\VV_0$ on the disk $\Dd(\lambda_0,r_0)$. Thanks to \eqref{eq:0d}, resonances of $\VV_0+\VV_1$ in the punctured disk $\Dd(\lambda_0,r_0) \setminus \lambda_0$ are then the poles of
\begin{equation*}
\left( \Id + \VV_1 R_{\VV_0}(\lambda) \rho \right)^{-1} = \left( \Id + \VV_1 L^{\lambda_0}_{\VV_0}(\lambda) \rho + i\dfrac{\VV_1 f \otimes g \rho}{\lambda-\lambda_0} \right)^{-1}.
\end{equation*}
When $|\VV_1|_{\HH^{-2}}$ sufficiently small and $\lambda \in \Dd(\lambda_0,r_0) \setminus \lamba_0$, the operator $\Id + \VV_1 L^{\lambda_0}_{\VV_0}(\lambda) \rho$ is invertible by a Neumann series. Indeed, since $\rho L^{\lambda_0}_{\VV_0}(\lambda) \rho$ maps $L^2$ to $H^2$ and $H^{-2}$~to~$L^2$, 
\begin{equation*}
|(\VV_1 L^{\lambda_0}_{\VV_0}(\lambda) \rho)^2|_{\BB} \leq |\VV_1|_\infty |\rho L^{\lambda_0}_{\VV_0}(\lambda) \rho|_{H^{-2} \rightarrow L^2} |\VV_1|_{H^2 \rightarrow H^{-2}} |\rho L^{\lambda_0}_{\VV_0}(\lambda) \rho|_{L^2 \rightarrow H^2} \leq C |\VV_1|_{\HH^{-2}} < 1.
\end{equation*} 
Therefore, we can write
\begin{equations*}
\left( \Id + \VV_1 R_{\VV_0}(\lambda) \rho \right)^{-1} =  \left( \Id + i\dfrac{\left( \Id + \VV_1 L^{\lambda_0}_{\VV_0}(\lambda) \rho \right)^{-1} \VV_1 f \otimes g \rho}{\lambda-\lambda_0} \right)^{-1} \left( \Id + \VV_1 L^{\lambda_0}_{\VV_0}(\lambda) \rho \right)^{-1}.
\end{equations*}
Hence, $\lambda$ is a resonance of $\VV_0+\VV_1$ in the disk $\Dd(\lambda_0,r_0) \setminus \lamba_0$ if and only if
\begin{equation*}
\Id + i\dfrac{\left( \Id + \VV_1 L^{\lambda_0}_{\VV_0}(\lambda) \rho \right)^{-1} \VV_1 f \otimes f \rho}{\lambda-\lambda_0}  \text{ is not invertible. }
\end{equation*}
This operator is the sum of the identity with a rank one projector, hence it is not invertible if and only if
\begin{equation*}
1 + \dfrac{i}{\lambda-\lambda_0}\trace \left( \left( \Id + \VV_1 L^{\lambda_0}_{\VV_0}(\lambda) \rho \right)^{-1} \VV_1 f \otimes g \rho \right) = 0.
\end{equation*}
Using the Neumann series representation of $\left( \Id + \VV_1 L^{\lambda_0}_{\VV_0}(\lambda) \rho \right)^{-1}$, we obtain the characteristic equation
\begin{equation*}
\lambda - \lambda_0 + i \sum_{k = 0}^\infty (-1)^k  \blr{ (\VV_1 L^{\lambda_0}_{\VV_0}(\lambda) \rho)^k \VV_1 f , \overline{g}} = 0
\end{equation*}
which is exactly the equation $\varphi(\lambda)=0$ on $\Dd(\lambda_0,r_0) \setminus \lambda_0$. Thus $\lambda_1 \neq \lambda_0$ implies $\lambda_1=\lambda_2$. To conclude, we show that we cannot have $\lambda_1=\lambda_0$ and $\lambda_2 \neq \lambda_0$. Otherwise, we could reverse the above argument -- that showed that $\lambda_1$ is a zero of $\varphi$ --  to deduce that $\lambda_2$ is a resonance of $\VV_0+\VV_1$. But this is a contradiction, because according to 
$(ii)$ the unique resonance of $\VV_0+\VV_1$ on $ \Dd(\lambda_0,r_0)$ is $\lambda_1$, itself equal to $\lambda_0$. 
\end{proof}

\begin{rmk} The results of this section lie within the general theory developed by Golowich--Weinstein \cite{GW}. This abstract framework gives sufficient conditions on singular perturbations so that the scattering resonances are stable -- namely, the perturbation must be small in $\HH^{-2}$. In the context of resonances for dynamical systems, a somewhat similar framework was developed in Keller--Liverani \cite{KL}. 
\end{rmk}

\begin{rmk} \cite[Theorem 4.1]{GW} asserts that near a simple resonance of $\VV_0$, there must exist a simple resonance of $\VV_0+\VV_1$ -- that depends analytically on $\VV_1$. Their proof relies on the implicit function theorem. The use of analytic Fredholm techniques instead allows us to refine this result. Lemma \ref{lem:1b} deals with higher multiplicity and shows that (conversely) every resonance of $\VV_0+\VV_1$ must be close to a resonance of $\VV_0$. Lemma \ref{lem:1c} characterizes exactly resonances of $\VV_0+\VV_1$ near $\lambda_1$, in terms of the nodal set of a holomorphic function defined as a rapidly converging power series (see $(i)$).
\end{rmk}

\section{Large $N$ asymptotic for terms related to $V_N$}\label{sec:3}

We consider now given a sequence $\{u_j\}_{j \in \Z^d}$ of independent identically distributed random variables, with
\begin{equation*}
\Ee(u_j) = 0, \ \ \Ee(u_j^2) = 1, \ \ u_j \in L^\infty.
\end{equation*}
Unless specified otherwise, all the sums below are realized over indices in $[-N,N]^d$. We fix $q, q_0 \in C_0^\infty(\R^d,\C)$ and we define $V_N = q_0 + V_\#$, where $V_\#$ is the random potential
\begin{equation*}
V_\#(x) \de \sum_{j} u_j q(Nx-j) \ \  N \gg 1.
\end{equation*}
The potential $V_N$ has support contained in a fixed compact set. Indeed, if $E_j$ denotes the support of $q(N\cdot-j)$, then $E_j$ is contained in a ball of radius $C/N$, centered at $j/N$. It follows that
\begin{equation}\label{eq:4d}
\supp(V_N) \subset \supp(q_0) \cup \bigcup_{j \in [-N,N]^d} E_j \subset \supp(q_0) \cup [-C-1,C+1]^d.
\end{equation}
Moreover, $V_N$ is bounded almost surely independently of $N$ or of the value of $\{u_j\}$. Indeed, since $E_j$ of $q(N\cdot-j)$ is contained in a ball of radius $C/N$, centered at $j/N$, any singleton of $\R^d$ intersects with at most $C^d$ sets $E_j$. As the $u_j$ are i.i.d. and bounded almost surely, the estimate
\begin{equation}\label{eq:4e}
|V_N(x)| \leq |q_0|_\infty + |q|_\infty \sum_j |u_j| \1_{E_j} \leq |q_0|_\infty + C^d |q|_\infty |u_j|_\infty < \infty
\end{equation}
holds independently of $N$ and of $\{u_j\}$.

The results of the previous section indicate clearly the path to follow. We will show that $V_\# = V_N-q_0$ is a $\HH^{-2}$-small perturbation of $q_0$, at least with large probability. This will imply that resonances of $V_N$ and $q_0$ must remain close. Lemma \ref{lem:1c} characterizes locally resonances of $V_N$ as the zeroes of a random holomorphic function, expressed as a rapidly converging series. We will estimate the first two terms in the series, and show that every other term is negligible.

\subsection{Probabilistic tools} 

When $\az = (\az_{j\ell})$ is a matrix with complex entries, we denote by $|\az|_\HS$ its Hilbert--Schmidt norm: $|\az|_\HS^2 = \sum_{j,\ell} |\az_{j\ell}|^2$. Our first result is a reformulation of the Hanson--Wright inequality \cite{HW} -- a well-known bound that estimate quadratic forms at random vectors. We include the proof for the sake of completeness.

\begin{lem}\label{lem:1a} There exist $c, C > 0$ depending only on the distribution of the $u_j$'s such that the following holds. For any $N^d \times N^d$ matrix $\az = (\az_{j\ell})$ with complex entries, the expected value of $\sum_{j,\ell} \az_{j\ell} u_j u_\ell$ is $\trace(\az)$. In addition, for any $t > 0$  with $t^2 \geq 2|\trace(\az)|$,  
\begin{equations}\label{eq:0a}
\Pp\left(\left|\sum_{j,\ell} \az_{j\ell} u_j u_\ell \right| \geq t^2 \right) \leq C \exp \left( - \dfrac{ct^2}{|\az|_\HS} \right).
\end{equations}
\end{lem}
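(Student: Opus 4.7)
The plan is to derive this as a consequence of the classical Hanson--Wright inequality for real symmetric matrices, bootstrapped by a case analysis to recover the single-regime form stated here.

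First, the expected-value claim is immediate. By independence together with $\Ee(u_j)=0$ and $\Ee(u_j^2)=1$, one has $\Ee(u_j u_\ell) = \delta_{j\ell}$, and therefore
\begin{equation*}
\Ee\left(\sum_{j,\ell}\az_{j\ell}u_j u_\ell\right) = \sum_{j}\az_{jj} = \trace(\az).
\end{equation*}

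For the tail bound, set $S = \sum_{j,\ell}\az_{j\ell}u_j u_\ell$. Since $u_j u_\ell = u_\ell u_j$, the sum depends only on the symmetric part $\az^s = (\az+\az^T)/2$, which satisfies $|\az^s|_\HS \leq |\az|_\HS$. Splitting $\az^s$ into its real and imaginary components reduces matters to two real symmetric matrices with Hilbert--Schmidt norms controlled by $|\az|_\HS$. Since the $u_j$ are bounded, they are sub-Gaussian, so the classical Hanson--Wright inequality \cite{HW} yields
\begin{equation*}
\Pp\bigl(|S-\Ee S| \geq s\bigr) \leq 2\exp\left(-c\min\left(\dfrac{s^2}{|\az|_\HS^2},\dfrac{s}{|\az|_\BB}\right)\right)
\end{equation*}
for constants depending only on the distribution of the $u_j$'s.

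To obtain the stated bound, I would use the hypothesis $t^2 \geq 2|\trace(\az)|$ to pass from $|S|$ to $|S-\Ee S|$: if $|S|\geq t^2$, then $|S-\Ee S| \geq t^2-|\trace(\az)| \geq t^2/2$. Applying the previous bound with $s = t^2/2$ and using $|\az|_\BB \leq |\az|_\HS$ replaces the minimum by $c'\min(t^4/|\az|_\HS^2,\,t^2/|\az|_\HS)$. When $t^2 \geq |\az|_\HS$ both quantities are at least $c't^2/|\az|_\HS$ and the stated bound follows; when $t^2 < |\az|_\HS$, the right-hand side $\exp(-ct^2/|\az|_\HS)$ is bounded below by $e^{-c}$, so the inequality is trivially satisfied after enlarging $C$.

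There is no substantial conceptual obstacle: Hanson--Wright is invoked as a black-box input. The only real work is the symmetrization that reduces a complex matrix to two real symmetric matrices, and the two-case merging of the classical two-regime bound into the single-regime form convenient for the later applications in \S\ref{sec:3} and \S\ref{sec:4}.
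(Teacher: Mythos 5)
Your proof is correct and follows essentially the same route as the paper: both compute the expectation directly, use $t^2\ge 2|\trace(\az)|$ to pass from $|S|$ to $|S-\Ee S|$, invoke Hanson--Wright, and merge the two-regime bound into a single exponential by the case split on whether $t^2\ge |\az|_\HS$. The only cosmetic difference is that you reduce by hand to real symmetric matrices, whereas the paper simply cites Rudelson--Vershynin for the general version; either way the argument goes through.
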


\begin{proof} For $\ell \neq j$, $\Ee(u_j u_\ell) = \Ee(u_j)\Ee(u_\ell) = 0$,  hence
\begin{equation*}
\Ee\left( \sum_{j,\ell} \az_{j\ell} u_j u_\ell \right) = \sum_{j,\ell} \az_{j\ell} \Ee(u_j u_\ell) = \sum_{j} \az_{jj} \Ee(u_j^2) = \trace(\az). 
\end{equation*}
This proves the statement about the expected value. To show \eqref{eq:0a}, we first observe that if $t^2 \geq 2 |\trace(\az)|$, 
\begin{equation*}
\left|\sum_{j,\ell} \az_{j\ell} u_j u_\ell \right| \geq t^2 \Rightarrow \left|\sum_{j,\ell} \az_{j\ell} u_j u_\ell - \trace(\az) \right| \geq t^2 - |\trace(\az)| \geq \dfrac{t^2}{2}.
\end{equation*}
Therefore, 
\begin{equations*}
\Pp\left(\left|\sum_{j,\ell} \az_{j\ell} u_j u_\ell \right| \geq t^2 \right) \leq \Pp\left(\left|\sum_{j,\ell} \az_{j\ell} u_j u_\ell - \trace(\az) \right| \geq \dfrac{t^2}{2} \right) \\ \leq \exp\left( - c \min\left(\dfrac{t^2}{|\az|}, \dfrac{t^4}{|\az|_\HS^2} \right) \right).
\end{equations*}
In the above we applied the Hanson--Wright inequality \cite{HW} -- for the general version needed here and a very elegant proof we refer to Rudelson--Vershynin \cite{RV}. In the above, the constant $c$ depends only on the distribution of the $u_j$'s and $|\az|$ denotes the norm of $\az$ as an operator $\C^{N^d} \rightarrow \C^{N^d}$ with $\C^{N^d}$ provided with its Euclidean norm. If in addition we assume that $t^2 \geq |\az|_\HS$, we can use $|\az| \leq |\az|_\HS$ and $\frac{t^2}{|\az|_\HS} \geq 1$ to get
\begin{equation*}
\min\left(\dfrac{t^2}{|\az|}, \dfrac{t^4}{|\az|_\HS^2} \right) \geq \min\left(\dfrac{t^2}{|\az|_\HS}, \dfrac{t^4}{|\az|_\HS^2} \right) = \dfrac{t^2}{|\az|_\HS}. 
\end{equation*}
This implies \eqref{eq:0a} in the case $t^2 \geq 2 |\trace(\az)|$ and $t^2 \geq |\az|_\HS$. We remove the assumption $t^2 \geq |\az|_\HS$ by observing that the opposite case implies $\frac{t^2}{|\az|_\HS} \leq 1$, which leads to
\begin{equation*}
\Pp\left(\left|\sum_{j,\ell} \az_{j\ell} u_j u_\ell \right| \geq t^2 \right) \leq 1 \leq e^{c} e^{-c t^2/|\az|_\HS}.
\end{equation*}
It suffices to set $C=e^c$ to complete the proof.
\end{proof}

We will need a slightly sophisticated version of the central limit theorem, based on Lindeberg--Lyapounov's result.

\begin{lem}\label{lem:1j} Let $\varphi \in C^\infty(\R^d,\C)$, not identically vanishing on $[-1,1]^d$ and $\Sigma[\varphi]$ be the $2 \times 2$ matrix defined in \eqref{eq:1g}. Then,
\begin{equation}\label{eq:1f}
\dfrac{1}{N^{d/2} \int_{\R^d} q(x) dx}\sum_{j \in [-N,N]^d} u_j \int_{\R^d} q(x) \varphi\left( \dfrac{x+j}{N} \right) dx \law \NN(0,\Sigma[\varphi])
\end{equation}
where the convergence to $\NN(0,\Sigma[\varphi])$ is defined before Theorem \ref{thm:2}.
\end{lem}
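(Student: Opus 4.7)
The plan is to identify the LHS as a sum of independent random variables, extract a clean Riemann-sum main term, and apply a Lindeberg--Lyapounov central limit theorem.

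First, I would Taylor expand. Since $\varphi$ is smooth and $q$ is compactly supported, for $x \in \supp(q)$ and $j \in [-N,N]^d \cap \Z^d$,
\begin{equation*}
\varphi\left(\dfrac{x+j}{N}\right) = \varphi(j/N) + \dfrac{1}{N}\sum_{k=1}^d x_k \partial_k \varphi(j/N) + O(N^{-2}),
\end{equation*}
uniformly in $j$. Integrating against $q$ and dividing by $N^{d/2} \int q$ yields
\begin{equation*}
c_{N,j} \de \dfrac{1}{N^{d/2}\int q} \int q(x)\varphi\!\left(\tfrac{x+j}{N}\right) dx = \dfrac{\varphi(j/N)}{N^{d/2}} + \dfrac{r_{N,j}}{N^{d/2}}, \qquad |r_{N,j}| = O(N^{-1}).
\end{equation*}
Because the $u_j$ are bounded, independent, and centered, Chebyshev gives
\begin{equation*}
\Ee\!\left| \sum_j u_j \tfrac{r_{N,j}}{N^{d/2}} \right|^2 = \dfrac{1}{N^d}\sum_j |r_{N,j}|^2 = O(N^{-2}),
\end{equation*}
so this remainder is $o_\Pp(1)$ and can be discarded.

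It remains to prove that $S_N \de \frac{1}{N^{d/2}} \sum_j u_j \varphi(j/N)$, viewed as a vector in $\R^2$ via real and imaginary parts, converges to $\NN(0,\Sigma[\varphi])$. The summands are independent centered $\R^2$-valued random variables. Their covariance matrix is
\begin{equation*}
\Sigma_N = \dfrac{1}{N^d}\sum_{j} \matrice{\varphi_1(j/N)^2 & \varphi_1(j/N)\varphi_2(j/N) \\ \varphi_1(j/N)\varphi_2(j/N) & \varphi_2(j/N)^2},
\end{equation*}
which is a Riemann sum that converges to $\Sigma[\varphi]$ as $N \to \infty$. Since the $u_j$ are bounded, the Lyapounov condition with exponent $4$ is immediate:
\begin{equation*}
\sum_j \Ee \left| \tfrac{u_j \varphi(j/N)}{N^{d/2}} \right|^4 \leq \dfrac{C |\varphi|_\infty^4 \,(2N+1)^d}{N^{2d}} = O(N^{-d}) \longrightarrow 0.
\end{equation*}
The Lindeberg--Lyapounov theorem for triangular arrays of $\R^2$-valued variables therefore delivers convergence of $S_N$ to $\NN(0,\Sigma[\varphi])$ in the non-degenerate case.

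For the degenerate case, without loss of generality assume $\varphi_1 \not\equiv 0$ on $[-1,1]^d$, so that $\varphi_2 = \az \varphi_1$ on $[-1,1]^d$ for some $\az \in \R$; then $\varphi = (1+i\az)\varphi_1$ on $[-1,1]^d$ and
\begin{equation*}
(1+i\az)^{-1} S_N = \dfrac{1}{N^{d/2}} \sum_j u_j \varphi_1(j/N)
\end{equation*}
is real-valued. The same one-dimensional Lindeberg--Lyapounov argument gives convergence of this expression to $\NN\!\left(0,\int_{[-1,1]^d} \varphi_1(x)^2 dx\right)$, which is precisely the definition of convergence to $\NN(0,\Sigma[\varphi])$ adopted before Theorem~\ref{thm:2}. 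The main point of care in the whole proof is simply checking the Lyapounov condition and the Riemann-sum convergence of $\Sigma_N$; no single step is genuinely hard, but the bookkeeping around the degenerate case is where a careless statement would slip.
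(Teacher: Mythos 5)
Your proof is correct and follows essentially the same route as the paper: Taylor expansion to replace $\int q(x)\varphi((x+j)/N)\,dx/\int q$ by $\varphi(j/N)$, a Riemann sum for the limiting variance, Lyapounov's central limit theorem, and a separate treatment of the degenerate case that matches the ad-hoc definition of convergence to $\NN(0,\Sigma[\varphi])$ given before Theorem~\ref{thm:2}.

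The two small divergences from the paper are cosmetic rather than substantive. First, you discard the $O(N^{-1})$ Taylor remainder up front via a second-moment/Chebyshev bound and then apply the CLT to the clean Riemann-sum term $\frac{1}{N^{d/2}}\sum_j u_j\varphi(j/N)$; the paper instead applies the Lyapounov CLT directly to $\frac{1}{N^{d/2}}\sum_j u_j\sigma_{j,N}$ and only uses the Taylor expansion afterward to identify the limiting variance. Second, you invoke a bivariate Lindeberg--Lyapounov CLT directly, whereas the paper reduces to the scalar CLT via the Cram\'er--Wold device. Since the multivariate CLT is itself usually proved via Cram\'er--Wold, these are the same argument packaged differently; your packaging is slightly more streamlined, the paper's is more elementary in what it cites. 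Your degenerate-case bookkeeping is actually more internally consistent than the paper's, which states $\varphi_1 = \az\varphi_2$ yet divides by $(1+i\az)$ (which normalizes to $\varphi_1$ only under $\varphi_2 = \az\varphi_1$); you use the latter convention throughout, as the paper evidently intended.
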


\begin{proof} Write $\varphi=\varphi_1+i\varphi_2$. We first assume that $\Sigma[\varphi]$ is not degenerate. This is equivalent to $\varphi_1$ and $\varphi_2$ linearly independent over $\R$. Let $\sigma_{j,N}^1, \sigma_{j,N}^2$ be the two real numbers defined by 
\begin{equation*}
\sigma_{j,N}^1 + i\sigma_{j,N}^2 = \dfrac{1}{\int_{\R^d} q(x) dx} \int_{\R^d} q(x) \varphi \left( \dfrac{x+j}{N} \right) dx.
\end{equation*}
To show \eqref{eq:1f}, it suffices to study the convergence in distribution to a Gaussian of 
\begin{equation*}
\dfrac{1}{N^{d/2}}\sum_{j \in [-N,N]^d} u_j (s\sigma_{j,N}^1+t\sigma_{j,N}^2), \ \ (s, t) \neq (0,0)
\end{equation*}  
because of the Cram\'er--Wold device \cite[Theorem 29.4]{Bi}. We apply the central limit theorem in its version due to Lyapounov, see \cite[Theorem 27.3]{Bi}. We remark that Lyapounov's condition: 
\begin{equation*}
\limsup_{N \rightarrow \infty} \dfrac{1}{N^{3d/2}} \sum_{j \in [-N,N]^d} (s \sigma_{j,N}^1+t\sigma_{j,N}^2)^3 = 0
\end{equation*}
is immediately satisfied because $s \sigma_{j,N}^1+t\sigma_{j,N}^2 = O(1)$. Hence, we deduce that
\begin{equations*}
\dfrac{1}{N^{d/2}} \sum_{j \in [-N,N]^d} u_j (s\sigma_{j,N}^1+t\sigma_{j,N}^2) \rightarrow \NN(0,\sigma(s,t)^2), \\
\sigma(s,t)^2 = \lim_{N \rightarrow \infty} \dfrac{1}{N^d} \sum_{j \in [-N,N]^d} (s \sigma_{j,N}^1+t\sigma_{j,N}^2)^2.
\end{equations*}
It remains to compute $\sigma(s,t)^2$ and check that it is not vanishing. Since $\varphi$ is smooth and $q$ has compact support, a Taylor expansion and a Riemann series argument shows
\begin{equation*}
\sigma_{j,N}^k = \varphi_k\left( \dfrac{j}{N} \right)+O(N^{-1}), \ \ \sigma(s,t)^2 = \int_{[-1,1]^d} (s\varphi_1+t\varphi_2)^2(x) dx.
\end{equation*}
Since $\varphi_1$ and $\varphi_2$ are linearly independent, $\sigma(s,t) \neq 0$ as long as $(s,t) \neq (0,0)$. We recognize the distribution of $sX+tY$, where $(X,Y)$ is a Gaussian vector with mean $0$ and covariance matrix $\Sigma[\varphi]$. This proves the lemma when $\Sigma[\varphi]$ is non degenerate.

We now deal with $\Sigma[\varphi]$ degenerate. The determinant of $\Sigma[\varphi]$ vanishes; this yields the case of equality in the Cauchy--Schwarz inequality. Hence, we can assume $\varphi_1 = \az \varphi_2$ for some $\az \in \R \setminus 0$ -- the case $\varphi_2=\az \varphi_1$ is treated similarly. According to our definition of convergence to $\NN(0,\Sigma[\varphi])$, it remains to study the convergence in distribution of
\begin{equations*}
\dfrac{1}{N^{d/2} (1+i\az) \int_{\R^d} q(x) dx }\sum_{j \in [-N,N]^d} u_j \int_{\R^d} q(x) \varphi\left( \dfrac{x+j}{N} \right) dx \\ = \dfrac{1}{N^{d/2} \int_{\R^d} q(x) dx}\sum_{j \in [-N,N]^d} u_j \int_{\R^d} q(x) \varphi_1\left( \dfrac{x+j}{N} \right) dx.
\end{equations*}
Again, we let $\tsigma_{j,N}^1, \tsigma_{j,N}^2$ be the real numbers such that 
\begin{equation*}
\tsigma_{j,N}^1+ i\tsigma_{j,N}^2 = \dfrac{1}{\int_{\R^d} q(x) dx}\int_{\R^d} q(x) \varphi_1\left( \dfrac{x+j}{N} \right) dx.
\end{equation*}
We now apply Lyapounov's central limit theorem to study the convergence in distribution of 
\begin{equation*}
\dfrac{1}{N^{d/2}} \sum_{j \in [-N,N]^d} u_j (s\tsigma_{j,N}^1+t\tsigma_{j,N}^2).
\end{equation*}
As in the case $\Sigma[\varphi]$ non-degenerate, we first check Lyapounov's condition -- which is obviously verified because $\tsigma_{j,N}^k=O(1)$. In fact, we even have
\begin{equation*}
\tsigma_{j,N}^1 = \varphi_1\left( \dfrac{j}{N} \right) + O(N^{-1}), \ \ \tsigma_{j,N}^2 = O(N^{-1}),
\end{equation*}
and as previously, an evaluation by a Riemann sum yields
\begin{equations*}
\dfrac{1}{N^{d/2}} \sum_{j \in [-N,N]^d} u_j (s\tsigma_{j,N}^1+t\tsigma_{j,N}^2) \rightarrow \NN(0,\tsigma(s,t)^2), \\
\tsigma(s,t)^2 = \limsup_{N \rightarrow \infty} \dfrac{1}{N^d} \sum_{j \in [-N,N]^d} (s \tsigma_{j,N}^1+t\tsigma_{j,N}^2)^2 = s^2 \int_{[-1,1]^d} \varphi_1(x)^2 dx.
\end{equations*}
If $(X,Y)$ are independent random variables with distributions $\NN\left(0,\int_{[-1,1]^d} \varphi_1(x)^2 dx\right)$ and $\delta_0$, respectively, the random variable $sX+tY$ has distribution $\NN(0,\tsigma(s,t)^2)$. Another application of the Cram\'er--Wold device concludes the proof.
\end{proof}

\subsection{Estimates on $\HH^{-s}$-norms} Recall that $m$ is the order of vanishing of $\hq(\xi)$ at $\xi = 0$ and that $\gamma = \min(7/4,d/2+m)$. In order to apply the results of \S\ref{sec:2}, we first observe that $V_\# = V_N-q_0$ is likely small in $\HH^{-2}$. 

\begin{lem}\label{lem:1d} There exist  $C_0, c_0 > 0$ such that for any $N$,
\begin{equations*}
\Pp(|V_\#|_{\HH^{-2}} \geq N^{-\gamma/2}) \leq C_0 e^{-c_0 N^{\gamma}}
\end{equations*}
\end{lem}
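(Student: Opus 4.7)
The plan is to reduce the event $\{|V_\#|_{\HH^{-2}} \geq N^{-\gamma/2}\}$ to a corresponding event on a Sobolev norm $|V_\#|_{H^{-s}}$, recognize $|V_\#|_{H^{-s}}^2$ as a Hermitian quadratic form in the $u_j$'s, and apply the Hanson--Wright inequality (Lemma \ref{lem:1a}). The passage from $\HH^{-2}$ to $H^{-s}$ goes through Lemma \ref{lem:1f}. For $d \in \{1,3\}$, part $(i)$ gives $|V_\#|_{\HH^{-2}} \leq C|V_\#|_{H^{-2}}$ directly, so I set $s=2$. For $d \geq 5$ odd, $s=2 \leq d/2$ forces interpolation: fixing $s' = d/2+1$ and combining part $(ii)$ with the deterministic almost-sure bound $|V_\#|_\infty \leq C$ from \eqref{eq:4e} gives $|V_\#|_{\HH^{-2}} \leq C|V_\#|_{H^{-s'}}^{2/s'}$.

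From the explicit formula $\widehat{V_\#}(\xi) = N^{-d}\hat q(\xi/N)\sum_j u_j e^{-ij\xi/N}$ and Plancherel,
\begin{equation*}
|V_\#|_{H^{-s}}^2 = \sum_{j,\ell \in [-N,N]^d} \az_{j\ell}\, u_j u_\ell,
\qquad
\az_{j\ell} = N^{-2d}\int_{\R^d} \lr{\xi}^{-2s}|\hat q(\xi/N)|^2 e^{-i(j-\ell)\xi/N} d\xi.
\end{equation*}
The identity $\sum_{j,\ell}\az_{j\ell}z_j\overline{z_\ell} = N^{-2d}\int \lr{\xi}^{-2s}|\hat q(\xi/N)|^2 \bigl|\sum_j z_j e^{-ij\xi/N}\bigr|^2 d\xi \geq 0$ shows $\az$ is Hermitian positive semidefinite, hence $|\az|_\HS \leq \trace(\az)$. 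Changing variable to $\eta = \xi/N$ gives $\trace(\az) \approx \int \lr{N\eta}^{-2s}|\hat q(\eta)|^2 d\eta$; splitting the integral at $|\eta|=1/N$ and using $|\hat q(\eta)|^2 = O(|\eta|^{2m})$ near the origin with Schwartz decay at infinity yields $\trace(\az) \leq C N^{-\min(d+2m,\, 2s)}$ (up to a harmless log in the borderline case).

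The final step is to apply Lemma \ref{lem:1a} with $t^2 = cN^{-\gamma}$ when $d \in \{1,3\}$ and with $t^2 = cN^{-\gamma s'/2}$ when $d \geq 5$ (the latter is the exponent needed so that $|V_\#|_{H^{-s'}}^{2/s'} \leq N^{-\gamma/2}$). The hypothesis $t^2 \geq 2\trace(\az)$ and the target rate $t^2/|\az|_\HS \geq cN^\gamma$ then reduce, using the trace bound, to elementary comparisons of polynomial exponents. In the low-dimensional case this amounts to $\min(d+2m,4) \geq 2\gamma = \min(7/2, d+2m)$, which holds since $4 \geq 7/2$. In the high-dimensional case, with $s'=d/2+1$ and $2\gamma = 7/2$, both required inequalities reduce to $9d \geq 42$ for $m=0$ and to $9d \geq 10$ for $m \geq 1$, each satisfied when $d \geq 5$.

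The main obstacle I anticipate is the bookkeeping of these exponents: the value $\gamma = \min(7/4, d/2+m)$ is exactly the transition between a low-frequency-dominated trace of order $N^{-(d+2m)}$ and the Hanson--Wright rate composed with the interpolation weight $2/s'$. The cap at $7/4$ in high dimension is a consequence of that interpolation loss, and the verification must confirm that $s' = d/2+1$ (or any $s' > d/2$ of that order) indeed saturates the sharp exponent uniformly in $d \geq 5$ and $m \geq 0$.
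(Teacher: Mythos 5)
Your proposal is correct and follows essentially the same route as the paper: reduce $|V_\#|_{\HH^{-2}}$ to a Sobolev norm $|V_\#|_{H^{-s}}$ via Lemma \ref{lem:1f} (and the uniform $L^\infty$ bound \eqref{eq:4e} when interpolating), recognize $|V_\#|_{H^{-s}}^2$ as a Hermitian quadratic form in the $u_j$'s, estimate the trace (and thence the Hilbert--Schmidt norm) by the rescaled frequency split near $|\eta|\sim 1/N$, and invoke the Hanson--Wright bound of Lemma \ref{lem:1a}. The only deviation is cosmetic: you take $s' = d/2+1$ for $d\geq 5$ while the paper takes the sharper $s = d/\gamma = 4d/7$, and you observe $|\az|_\HS \leq \trace(\az)$ via positive semidefiniteness while the paper uses $|\az_{j\ell}|\leq\az_{00}$; both give the same order and your exponent bookkeeping ($9d\geq 42$ for $m=0$, satisfied precisely for odd $d\geq 5$) is correct.
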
 

\begin{proof} We start first with $d \leq 3$. In this case $2 > d/2$, therefore Lemma \ref{lem:1f} implies that $|V_\#|_{\HH^{-2}} \leq C|V_\#|_{H^{-2}}$. Hence
\begin{equation}\label{eq:5v}
\Pp(|V_\#|_{\HH^{-2}} \geq N^{-\gamma/2}) \leq \Pp(|V_\#|_{H^{-2}}^2 \geq C^{-2}N^{-\gamma}).
\end{equation}
The advantage of the $H^2$-norm over the $\HH^{-2}$-norm is its bilinear character. This allows us to apply the Hanson--Wright inequality (Lemma \ref{lem:1a}). We observe that
\begin{equations*}
|V_\#|_{H^{-2}}^2 = \int_{\R^d} \dfrac{1}{(1+|\xi|^2)^2} \left|\int_{\R^d} e^{-ix\xi} \sum_j u_j q(Nx-j) dx \right|^2 d\xi \\
 =  \sum_{j,\ell} u_j u_\ell \int_{\R^d} \dfrac{1}{(1+|\xi|^2)^2} \int_{\R^d} e^{-ix\xi} q(Nx-j) dx \int_{\R^d} e^{-ix\xi} q(Ny-j) dy d\xi
\end{equations*}
Substitutions $x \mapsto \frac{x+j}{N}$, $y \mapsto \frac{y+\ell}{N}$, $\xi \mapsto N\xi$ yield
\begin{equation}\label{eq:2d}
|V_\#|_{H^{-2}}^2 = \sum_{j,\ell} \az_{j\ell} u_j u_\ell, \ \ \az_{j\ell} \de \dfrac{1}{N^d} \int_{\R^d} \dfrac{e^{i\xi(j-\ell) }|\hq(\xi)|^2}{(1+N^2 |\xi|^2)^2} d\xi.
\end{equation}
We recall that $\Ee(u_j) = 0$ and $\Ee(u_j^2) = 1$. Hence, $\trace(\az) = N^d \az_{00}$. If $\hq$ vanishes at order $m$, we get
\begin{equations*}
\trace(\az) = \int_{\R^d} \dfrac{|\hq(\xi)|^2}{(1+N^2 |\xi|^2)^2 } d\xi = \int_{r=0}^\infty \int_{\Ss^{d-1}} \dfrac{|\hq(r \te)|^2 r^{d-1} }{(1+N^2 r^2)^2 }dr d\sigma(\te) \\
\leq C \int_{r=0}^1 \dfrac{r^{2m+d-1} dr }{(1+N^2 r^2)^2 } + O(N^{-4}) = O(N^{-2m-d}) \int_{r=0}^N \dfrac{r^{2m+d-1} dr }{(1+r^2)^2} + O(N^{-4}) = O(N^{-2\gamma}).
\end{equations*}
In particular, $\trace(\az) = O(N^{-2\gamma})$. Since $|\az_{j\ell}| \leq |\az_{00}|$, the same computation shows $|\az|_\HS^2 \leq N^{2d} |\az_{00}|^2 = O(N^{-4\gamma})$. Lemma \ref{lem:1a} and \eqref{eq:5v} imply that for $N$ large enough,
\begin{equation}\label{eq:1b}
\Pp\left(|V_\#|_{\HH^{-2}} \geq N^{-\gamma/2}\right) \leq C e^{-cN^{\gamma}}.
\end{equation}
We may remove the assumption on $N$ by increasing the value of $C$ in \eqref{eq:1b}.

We now deal with $d \geq 5$. In this case, $\gamma = 7/4$. Fix $s > d/2$, and apply Lemma \ref{lem:1f}: $|V_\#|_{\HH^{-2}} \leq C |V_\#|_{H^{-s}}^{2/s}$. Therefore,
\begin{equations}\label{eq:2f}
\Pp\left(|V_\#|_{\HH^{-2}} \geq N^{-\gamma/2}\right) \leq \Pp\left(|V_\#|_{H^{-s}}^{2/s} \geq C^{-1} N^{-\gamma/2}\right) = \Pp\left(|V_\#|_{H^{-s}}^2 \geq C^{-s} N^{-s\gamma /2}\right).
\end{equations}
We now compute $|V_\#|_{H^{-s}}^2$: as in \eqref{eq:2d},
\begin{equation*}
|V_\#|_{H^{-s}}^2 = \sum \az_{j\ell} u_j u_\ell, \ \ \az_{j\ell} \de \dfrac{1}{N^d} \int_{\R^d} \dfrac{e^{i\xi(j-\ell) }|\hq(\xi)|^2}{(1+N^2 |\xi|^2)^s } d\xi.
\end{equation*}
We observe that $\trace(\az) = N^d \az_{00}$, thus
\begin{equations*}
\trace(\az) = \int_{\R^d} \dfrac{|\hq(\xi)|^2}{(1+N^2 |\xi|^2)^s } d\xi = \int_{r=0}^\infty \int_{\Ss^{d-1}} \dfrac{|\hq(r \te)|^2 r^{d-1} dr d\sigma(\te)}{(1+N^2 r^2)^s } \\
\leq C \int_{r=0}^1 \dfrac{r^{d-1}dr}{(1+N^2r^2)^s} + O(N^{-2s}) \sim N^{-d} + N^{-2s}.
\end{equations*}
Since $s > d/2$, we obtain $\trace(\az) = O(N^{-d})$. Similarly, $|\az|_{\HS}^2 = O(N^{-2d})$. Lemma \ref{lem:1a} shows that for any $t > 0$ with $t^s \geq O(N^{-d})$,
\begin{equation}\label{eq:2e}
\Pp\left(|V_\#|_{H^{-s}}^2 \geq C^{-s} t^{s}\right) \leq C e^{-ct^s N^d}.
\end{equation}
Fix now $s = d/\gamma > d/2$. We have $t^s N^d = (t^2 N^{2\gamma})^{d/(2\gamma)}$. If we take $t=N^{-\gamma/2}$, then \eqref{eq:2e} implies for $N$ sufficiently large
\begin{equation*}
\Pp\left(|V_\#|_{H^{-s}}^2 \geq C^{-s} N^{-s\gamma/2}\right) \leq C e^{-cN^{\gamma}}.
\end{equation*}
Again, we can get rid of the assumption on $N$ by increasing the value of $C$. The conclusion follows now from \eqref{eq:2f}.\end{proof}

Because of \eqref{eq:1d}, we also need to estimate $|V_\#|_{\HH^{-1}}$:

\begin{lem}\label{lem:1l} There exist $C, c > 0$ such that with probability $1-Ce^{-cN^{1/4}}$,
\begin{equations}\label{eq:2y}
|V_\#|_{\HH^{-1}} \leq N^{-3/8} \ \ \text{ if } d=1 \text{ and } \int_\R q(x) dx \neq 0, \\
|V_\#|_{\HH^{-1}} \leq N^{-7/8} \ \ \text{ if } d \geq 3; \text{ or } d=1 \text{ and } \int_\R q(x) dx = 0.
\end{equations}
\end{lem}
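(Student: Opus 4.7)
The plan is to mirror the proof of Lemma \ref{lem:1d} with the target norm $\HH^{-2}$ replaced by $\HH^{-1}$. The first step is to reduce to an $H^{-s}$-estimate via Lemma \ref{lem:1f}. When $d=1$ we have $s=1 > 1/2 = d/2$, so Lemma \ref{lem:1f}$(i)$ yields $|V_\#|_{\HH^{-1}} \leq C|V_\#|_{H^{-1}}$ directly. When $d\geq 3$ we have $s=1 \leq d/2$, so we fix some auxiliary $s' > d/2$ and combine Lemma \ref{lem:1f}$(ii)$ with the almost-sure bound \eqref{eq:4e} on $|V_\#|_\infty$ to get $|V_\#|_{\HH^{-1}} \leq C|V_\#|_{H^{-s'}}^{1/s'}$.

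The second step is to write $|V_\#|_{H^{-s}}^2$ as a quadratic form in the $u_j$'s, exactly as in \eqref{eq:2d}:
\begin{equation*}
|V_\#|_{H^{-s}}^2 = \sum_{j,\ell} \az_{j\ell} u_j u_\ell, \qquad \az_{j\ell} = \frac{1}{N^d} \int_{\R^d} \frac{e^{i\xi(j-\ell)}|\hq(\xi)|^2}{(1+N^2|\xi|^2)^s}\, d\xi,
\end{equation*}
and bound $\trace(\az) = N^d \az_{00}$ together with $|\az|_\HS^2 \leq N^{2d}|\az_{00}|^2$ by rescaling the integral defining $\az_{00}$. This gives: $\trace(\az), |\az|_\HS = O(N^{-1})$ for $d=1,\ s=1,\ m=0$; $\trace(\az), |\az|_\HS = O(N^{-2})$ for $d=1,\ s=1,\ m \geq 1$ (the improvement coming from the extra vanishing of $\hq$ at the origin); and $\trace(\az), |\az|_\HS = O(N^{-d})$ for $d \geq 3$ and any $s' > d/2$ (since the integrand concentrates in $|\xi| \lesssim 1/N$).

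The third and final step is to feed these estimates into the Hanson--Wright inequality (Lemma \ref{lem:1a}). For $d=1$ with $\int_\R q \neq 0$, choose $t^2 = cN^{-3/4}$: then $t^2/|\az|_\HS \gtrsim N^{1/4}$ and $t^2 \geq 2|\trace(\az)|$ for $N$ large, yielding $|V_\#|_{\HH^{-1}} \leq N^{-3/8}$ with probability $1-Ce^{-cN^{1/4}}$. For $d=1$ with $\int_\R q = 0$, choose $t^2 = cN^{-7/4}$, obtaining $t^2/|\az|_\HS \gtrsim N^{1/4}$ and hence $|V_\#|_{\HH^{-1}} \leq N^{-7/8}$. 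For $d \geq 3$, choose $t^2 = cN^{-7s'/4}$ so that the Hanson--Wright exponent becomes $t^2/|\az|_\HS \gtrsim N^{d-7s'/4}$; combining with the reduction from Lemma \ref{lem:1f}$(ii)$ gives $|V_\#|_{\HH^{-1}} \leq N^{-7/8}$ with probability $1-Ce^{-cN^{d-7s'/4}}$.

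The main obstacle is the $d \geq 3$ case, where the auxiliary exponent $s'$ must satisfy two competing constraints: $s' > d/2$ (so that Lemma \ref{lem:1f}$(ii)$ applies and the integral defining $\trace(\az)$ converges), and $s' \leq (4d-1)/7$ (so that the exponent $d - 7s'/4$ in the concentration rate dominates the prescribed $1/4$). The inequality $d/2 < (4d-1)/7$ reduces to $d > 2$, so the admissible interval of $s'$ is nonempty in every odd dimension $d \geq 3$; this is the structural reason the universal rate $e^{-cN^{1/4}}$ appears in the statement, and why the target exponent $7/8$ cannot be improved by this method.
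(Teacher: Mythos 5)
Your argument is correct and follows the paper's proof essentially step for step: reduce $|V_\#|_{\HH^{-1}}$ to an $H^{-s}$ bound via Lemma \ref{lem:1f} (combined with the almost-sure bound \eqref{eq:4e} on $|V_\#|_\infty$ when $d\geq 3$), write $|V_\#|_{H^{-s}}^2$ as a quadratic form $\sum_{j,\ell}\az_{j\ell}u_ju_\ell$ exactly as in \eqref{eq:2d}, estimate $\trace(\az)=N^d\az_{00}$ and $|\az|_\HS \leq N^d|\az_{00}|$ by rescaling and splitting into low/high frequencies (using $\hq(0)=0$ when $\int_\R q=0$), and then apply the Hanson--Wright inequality of Lemma \ref{lem:1a} with precisely the thresholds you select. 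Two small remarks: the paper's printed value $t=N^{-8/5}$ in the $d\geq 3$ case is a typo for $t\sim N^{-7/4}$ (which is what produces the exponent $d-7s/4$ that both you and the paper use), and your closing assertion that $7/8$ cannot be improved by this method is not quite right — the admissibility constraint $d/2 < s' \leq (4d-1)/(8a)$ allows any $a < 1-\tfrac{1}{4d}$ at the same rate $e^{-cN^{1/4}}$ (so $a<11/12$ when $d=3$), and $7/8$ is simply chosen to match the $d=1$, $\int_\R q=0$ case; this side remark has no bearing on the validity of the proof.
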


\begin{proof} The proof is similar to that of Lemma \ref{lem:1d}. We start with $d=1$. In this case, $|V_\#|_{\HH^{-1}} \leq C |V_\#|_{H^{-1}}$. As in \eqref{eq:2d},
\begin{equation*}
|V_\#|_{\HH^{-1}}^2 = \sum_{j,\ell} \az_{j\ell}u_j u_\ell, \ \ \ \ \ \az_{j\ell} \de \dfrac{1}{N} \int_{\R} \dfrac{e^{i\xi(j-\ell) }|\hq(\xi)|^2}{1+N^2 |\xi|^2} d\xi.
\end{equation*}
We observe that $\trace(\az) = O(N^{-1})$. Similarly, $|\az|_\HS^2 = O(N^{-2})$. The Hanson--Wright inequality implies
\begin{equation*}
\Pp(|V_\#|_{\HH^{-1}} \geq N^{-3/8}) = O(e^{-cN^{1/4}}).
\end{equation*}
If in addition $\int_\R q(x) dx = 0$ then we can split the integral defining $\az_{00}$ in low and high frequency parts -- as in the proof of Lemma \ref{lem:1d} -- and obtain $\az_{00} = O(N^{-3})$, $\trace(\az) = O(N^{-2})$ and $|\az|_\HS = O(N^{-4})$. Hence, 
\begin{equation*}
d=1, \ \int_\R q(x) dx = 0 \ \Rightarrow \ \Pp(|V_\#|_{\HH^{-1}} \geq N^{-7/8}) = O(e^{-cN^{1/4}}).
\end{equation*} 

We continue the proof for $d \geq 3$. Because of Lemma \ref{lem:1f}, for $s > d/2$, $|V_\#|_{\HH^{-1}} \leq C |V_\#|_{H^{-s}}^{1/s}$. Apply \eqref{eq:2e} (which is also valid for any $d$) to $t=N^{-8/5}$ obtain
\begin{equation*}
\Pp(|V_\#|_{H^{-s}} \geq N^{-7/8}) = O(e^{-cN^{d-7s/4}}). 
\end{equation*} 
Since $d \geq 3$, there exists $s > d/2$ such that $d-7s/4 = 1/4$. The lemma follows.\end{proof}

\subsection{Estimates on terms in the expansion \eqref{eq:3q}}

The previous lemmas show that $V_N$ is a small perturbation of $q_0$. This allows to apply the general theory developed in \S\ref{sec:2}. At this point it is simple to obtain that resonances of $V_N$ converge to those of $q_0$, almost surely (Corollary \ref{cor:1}). We aim to precise this result. Below, $\lambda_0$ denotes a simple resonance of $q_0$: there exists $f, g \in C^\infty(\R^d)$ and $L_{q_0}^{\lambda_0}(\lambda)$ a family of operators that is holomorphic near $\lambda_0$, such that
\begin{equation*}
R_{q_0}(\lambda) = L_{q_0}^{\lambda_0}(\lambda) + i\dfrac{f \otimes g}{\lambda- \lambda_0}.
\end{equation*}
In order to refine Corollary \ref{cor:1}, we will use Lemma \ref{lem:1c}. This requires to estimate the first two terms which appear in \eqref{eq:3q}: $\lr{V_\# f,g}$ and $\lr{V_\# L_{q_0}^{\lambda_0}(\lambda) V_\# f,\og}$.

\begin{lem}\label{lem:1e} If $\int_{\R^d}q(x) dx \neq 0$, 
\begin{equations}\label{eq:1i}
\dfrac{N^{d/2}}{\int_{\R^d} q(x) dx} \lr{V_\# f, \overline{g}} \law \NN(0,\Sigma[fg]), \text{ as } N \rightarrow \infty, \\  \Pp(N^{d/2}|\lr{V_\# f,g}| \geq N^{1/4}) = O(e^{-cN^{1/2}}).
\end{equations}
\end{lem}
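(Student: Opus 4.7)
The plan is to unfold the definition of $V_\#$ and change variables to express both inner products as sums of the form $\sum_j u_j X_j$, then invoke the two probabilistic tools already at hand: Lemma~\ref{lem:1j} for the Gaussian limit and Lemma~\ref{lem:1a} for the tail bound.

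First, since $\lr{V_\# f, \overline{g}} = \int_{\R^d} V_\# f g\,dx$, the substitution $y = Nx - j$ gives
\begin{equation*}
\lr{V_\# f, \overline{g}} = \frac{1}{N^d}\sum_j u_j \int_{\R^d} q(y)\,(fg)\Bigl(\frac{y+j}{N}\Bigr) dy.
\end{equation*}
Multiplying by $N^{d/2}/\int q$ yields exactly the left-hand side of \eqref{eq:1f} with $\varphi = fg$, so the first assertion of \eqref{eq:1i} follows directly from Lemma~\ref{lem:1j}. The smoothness of $fg$ required by Lemma~\ref{lem:1j} is immediate because $f,g$ are smooth resonant states and the arguments $(y+j)/N$ stay in a fixed compact neighborhood of $[-1,1]^d$ for $y \in \supp(q)$ and $j \in [-N,N]^d$.

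For the tail bound, the same change of variables applied to $\lr{V_\# f, g} = \int_{\R^d} V_\# f \overline{g}\,dx$ gives
\begin{equation*}
N^{d/2}\lr{V_\# f, g} = \frac{1}{N^{d/2}}\sum_j u_j Y_j, \quad Y_j \de \int_{\R^d} q(y)\,(f\overline{g})\Bigl(\frac{y+j}{N}\Bigr) dy,
\end{equation*}
where $Y_j = O(1)$ uniformly in $j$. Rather than introduce a separate linear concentration inequality, I would stay within the Hanson--Wright framework of \S\ref{sec:3} by squaring. Setting $\az_{j\ell} \de N^{-d} Y_j \overline{Y_\ell}$, which defines a rank-one matrix, one has
\begin{equation*}
\bigl|N^{d/2}\lr{V_\# f, g}\bigr|^2 = \sum_{j,\ell} \az_{j\ell} u_j u_\ell,
\end{equation*}
with both $\trace(\az)$ and $|\az|_\HS$ equal to $N^{-d}\sum_j |Y_j|^2 = O(1)$. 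Applying Lemma~\ref{lem:1a} with $t^2 = N^{1/2}$ produces an exponent $ct^2/|\az|_\HS$ of order $N^{1/2}$, while the hypothesis $t^2 \geq 2|\trace(\az)|$ is satisfied for $N$ large (and can be absorbed into constants for small $N$). The event $\{N^{d/2}|\lr{V_\# f, g}| \geq N^{1/4}\}$ coincides with $\{|\sum_{j,\ell}\az_{j\ell} u_j u_\ell| \geq N^{1/2}\}$, so the claimed bound $Ce^{-cN^{1/2}}$ follows.

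No serious obstacle arises: the lemma is essentially a repackaging of Lemmas~\ref{lem:1j} and \ref{lem:1a}. The subtlest point is the uniform bound $Y_j = O(1)$, which rests on the fact that the resonant states are smooth on a compact neighborhood of $[-1,1]^d$ containing all arguments $(y+j)/N$ that appear. The degenerate and non-degenerate cases of $\Sigma[fg]$ are handled intrinsically by the definition preceding Theorem~\ref{thm:2} and by Lemma~\ref{lem:1j}, so no additional bookkeeping is needed.
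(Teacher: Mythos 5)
Your overall plan matches the paper's: unfold $V_\#$, change variables, and combine Lemma~\ref{lem:1j} with the Hanson--Wright bound of Lemma~\ref{lem:1a}. The tail-bound half of your argument is correct and essentially identical to the paper's (same rank-one matrix, same choice $t^2 = N^{1/2}$, same check of the trace and Hilbert--Schmidt norms).

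However, there is a genuine gap in the Gaussian-limit half. Lemma~\ref{lem:1j} requires not only that $\varphi = fg$ be smooth but also that it be \emph{not identically vanishing on $[-1,1]^d$}; you verify smoothness but never address the non-vanishing hypothesis, and your closing remark about ``degenerate versus non-degenerate $\Sigma[fg]$'' does not cover it --- if $fg \equiv 0$ on $[-1,1]^d$ then $\Sigma[fg] = 0$ and the definition of convergence to $\NN(0,\Sigma[fg])$ given before Theorem~\ref{thm:2} does not even apply. This is not an innocuous omission: it is where the paper does its only real work in this lemma. One must show that neither resonant state $f$ nor coresonant state $g$ can vanish on an open subset of $[-1,1]^d$. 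The paper argues by the strong unique continuation principle: from \eqref{eq:3p} one extracts $(-\Delta + q_0 - \lambda_0^2)f = 0$ (and the analogue for $g$), so $f$ satisfies a differential inequality $|\Delta f| \le C\,|f|$ locally; if $f$ vanished on an open set, then the open set $E = \R^d \setminus \supp f$ would be closed by H\"ormander's unique continuation theorem~\cite[Theorem 17.2.6]{Ho}, forcing $f \equiv 0$, a contradiction. Without this step, the invocation of Lemma~\ref{lem:1j} is not justified.
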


\begin{proof} We have:
\begin{equation*}\label{eq:2k}
\lr{V_\# f, \overline{g}} = \sum_j u_j \int_{\R^d} q(Nx-j) f(x) g(x) dx = \dfrac{1}{N^d} \sum_j u_j \int_{\R^d} q(x) (fg)\left(\dfrac{x+j}{N}\right) dx.
\end{equation*}
If $fg$ is not identically vanishing on $[-1,1]^d$, then Lemma \ref{lem:1j} applies and yields \eqref{eq:1i}. It remains to show the non-vanishing condition. If $fg$ is identically vanishing on $[-1,1]^d$, either $f$ or $g$ vanishes on an open set $\Omega \subset [-1,1]^d$. Assume that $f$ vanishes on $\Omega$ and define $E = \R^d \setminus \supp(f)$. This is an open subset of $\R^d$ containing $\Omega$ and we show that it is also closed using the unique continuation principle. Since $f$ is a resonant state, 
\begin{equation*}
-\Delta f = (-q_0+\lambda^2)f. 
\end{equation*}
This can be seen for instance by observing that $(-\Delta + q_0-\lambda^2) R_{q_0}(\lambda) = \Id$ (which is holomorphic), hence $(-\Delta + q_0-\lambda^2) f \otimes g = 0$. Therefore, for any $x_0 \in \text{adh}(E)$ and any $x \in B(x_0,1)$,
\begin{equation*}
|\Delta f(x)| = |(-q_0(x)+\lambda^2)f(x)-(-q_0(x_0)+\lambda^2)f(x_0)| \leq (|q_0|_\infty + |\lambda|^2) \sup_{B(x_0,1)} |\nabla f|.
\end{equation*}

In addition, since $f$ is smooth and vanishes at infinite order at $x_0$, $f(x) = O(|x-x_0|^N)$ for any $N$. \cite[Theorem 17.2.6]{Ho} applies and shows that $f$ vanishes on $B(x_0,1)$. Hence $E$ is closed and $f \equiv 0$, which is not possible. If now $g$ vanishes on $\Omega$, then the same argument using that $g$ is a coresonant state:
\begin{equation*}
-\Delta g = (-\overline{q_0}+\overline{\lambda}^2)g
\end{equation*}
implies $g \equiv 0$, which is not possible either. This shows the convergence in distribution of $\lr{V_\# f,\og}$.

To show the large deviation estimate, we first write
\begin{equation}\label{eq:3k}
|\lr{V_\# f, \overline{g}}|^2 = \sum_{j,\ell} u_j u_\ell \az_j \overline{\az_\ell}, \ \ \az_j \de \dfrac{1}{N^d}\int_{\R^d} q(x) (fg)\left(\dfrac{x+j}{N}\right)dx.
\end{equation}
Since $f$ and $g$ are bounded, $\az_j = O(N^{-d})$, $\sum_j |\az_j|^2  = O(N^{-d})$, $|\az_j \az_\ell|_\HS^2 = O(N^{-2d})$. We can then apply the Hanson--Wright inequality to obtain $\Pp(|\lr{V_\# f,g}| \geq N^{1/4-d/2}) = O(e^{-cN^{1/2}})$, as claimed. \end{proof}

\begin{lem}\label{lem:1k} Assume that $d=1$ and $\int_\R q(x) dx = \int_\R xq(x) dx = 0$, or that $d=3$ and $\int_{\R^3} q(x) dx = 0$. Then, 
\begin{equation*}
\Pp(|\lr{V_\# f, \overline{g}}| \geq N^{-9/4}) = O(e^{-cN^{1/2}}).
\end{equation*}
\end{lem}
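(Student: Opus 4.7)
The plan is to mirror the large-deviation part of Lemma \ref{lem:1e}, exploiting the assumed vanishing moments of $q$ to improve the decay of the deterministic coefficients $\az_j$. From the computation \eqref{eq:3k} already done in that proof,
$$|\lr{V_\# f, \og}|^2 = \sum_{j,\ell} \be_{j\ell} u_j u_\ell, \quad \be_{j\ell} = \az_j \overline{\az_\ell}, \quad \az_j = \dfrac{1}{N^d}\int_{\R^d} q(x) (fg)\!\left(\dfrac{x+j}{N}\right) dx,$$
so the only quantities one needs to revisit are $\trace(\be)$ and $|\be|_\HS$.

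The refinement comes from Taylor-expanding $(fg)((x+j)/N)$ around $j/N$. Since $fg$ is smooth and $q$ is compactly supported, this yields uniformly in $j$
$$\int_{\R^d} q(x) (fg)\!\left(\dfrac{x+j}{N}\right) dx = (fg)(j/N) \int_{\R^d} q(x) dx + \dfrac{1}{N} \sum_i \partial_i(fg)(j/N) \int_{\R^d} x_i q(x) dx + O(N^{-2}).$$
Under the $d=1$ hypothesis $\int q = \int xq = 0$, both leading terms vanish and $\az_j = O(N^{-3})$; under the $d=3$ hypothesis $\int_{\R^3} q = 0$, only the zeroth moment vanishes and $\az_j = O(N^{-4})$. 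In either case, summing over the $O(N^d)$ sites produces the same bound $\sum_j |\az_j|^2 = O(N^{-5})$: one gets $N \cdot N^{-6}$ in dimension one and $N^3 \cdot N^{-8}$ in dimension three. Consequently $\trace(\be) = \sum_j |\az_j|^2 = O(N^{-5})$ and $|\be|_\HS^2 = (\sum_j |\az_j|^2)^2 = O(N^{-10})$.

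Finally I would apply the Hanson--Wright inequality (Lemma \ref{lem:1a}) with $t^2 = N^{-9/2}$. The hypothesis $t^2 \geq 2 |\trace(\be)|$ holds for $N$ large since $-9/2 > -5$, and the bound gives
$$\Pp\!\left(|\lr{V_\# f,\og}|^2 \geq N^{-9/2}\right) \leq C \exp\!\left(- c\,\dfrac{t^2}{|\be|_\HS}\right) = C \exp(-cN^{1/2}),$$
which is equivalent to the claimed $\Pp(|\lr{V_\# f,\og}| \geq N^{-9/4}) = O(e^{-cN^{1/2}})$. I do not anticipate any substantive obstacle: once the low-order moment cancellations are made quantitative through Taylor's theorem, the proof becomes a minor recalibration of the large-deviation argument in Lemma \ref{lem:1e}, with the improved exponents on $\az_j$ feeding through the Hilbert--Schmidt norm into the Hanson--Wright bound.
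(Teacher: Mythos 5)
Your proof is correct and follows essentially the same route as the paper: exploit the vanishing moments of $q$ to upgrade the decay of the coefficients $\az_j$ to $O(N^{-3})$ in dimension one and $O(N^{-4})$ in dimension three, then feed the resulting $|\be|_\HS = O(N^{-5})$ bound into the Hanson--Wright inequality. The only (immaterial) difference is cosmetic: in the $d=1$ case the paper extracts the cancellation by writing $q = Q''$ and integrating by parts twice, whereas you read it off a Taylor expansion of $fg$; both devices encode the vanishing of $\int q$ and $\int xq$ and give the same $O(N^{-3})$ bound.
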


\begin{proof} If $d=1$ and $\int_\R q(x) dx = \int_\R xq(x) dx = 0$, we can find $Q \in C_0^\infty(\R,\C)$ such that $Q'' = q$. We use the bilinear expression \eqref{eq:3k} for $|\lr{V_\# f,g}|^2$. Thanks to a double integration by parts, we see that $\az_j = O(N^{-3})$:
\begin{equation*}
\az_j = \dfrac{1}{N}\int_{\R} Q''(x) (fg)\left(\dfrac{x+j}{N}\right)dx = \dfrac{1}{N^3} \int_{\R} Q(x) (fg)''\left(\dfrac{x+j}{N}\right)dx.
\end{equation*}
It follows that $\sum_j |\az_j|^2 = O(N^{-5})$ and $|\az_j \az_\ell|_\HS = O(N^{-5})$. Hence, the Hanson--Wright inequality yields $\Pp(|\lr{V_\# f, \overline{g}}| \geq N^{-9/4}) \leq Ce^{-cN^{1/2}}$. 

If $d=3$ and $\int_\R q(x) dx  = 0$, we use again \eqref{eq:3k}. Since $f$ and $g$ are smooth,
\begin{equation}\label{eq:3r}
(fg)\left( \dfrac{x+j}{N} \right) = (fg)\left( \dfrac{j}{N} \right) + O(N^{-1})
\end{equation}
uniformly for $j \in [-N,N]^3$ and $x \in \supp(q)$. Using that $\int_{\R^3} q(x) dx = 0$, we deduce
\begin{equation*}
\az_j = \dfrac{1}{N^3} \int_{\R^3} q(x) (fg)\left( \dfrac{x+j}{N} \right) dx = O(N^{-4}).
\end{equation*}
In particular, $|\az_j \az_\ell|_\HS^2 = |\az_j|_{\ell^2}^4$. Using that $\int_{\R^3} q(x) dx = 0$ and \eqref{eq:3r}, we see that $\az_j = O(N^{-4})$. Therefore, $\sum_j |\az_j|^2 = O(N^{-5})$ and $|\az_j \az_\ell|_\HS^2 = O(N^{-10})$ and we conclude as above.\end{proof}

\begin{lem}\label{lem:1g} Assume that $d=1$, $\int_{\R}q(x) dx = 0$ and $\int_{\R} xq(x) dx \neq 0$. As $N \rightarrow +\infty$,
\begin{equations}\label{eq:1j}
(fg)' \not\equiv 0 \text{ on } [-1,1] \ \ \Rightarrow \ \   \dfrac{N^{3/2}}{\int_{\R} xq(x) dx} \lr{V_\# f, \og} \law \NN(0,\Sigma[(fg)']), \\
(fg)' \equiv 0 \text{ on } [-1,1] \ \ \Rightarrow \ \  \lr{V_\# f, \og} = O(N^{-3}).
\end{equations}
\end{lem}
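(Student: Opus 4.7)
The plan is to unify both cases via the substitution $y = Nx - j$, which rewrites
\[
\lr{V_\# f, \og} = \frac{1}{N}\sum_{j\in[-N,N]} u_j \int_\R q(y)\,(fg)\!\left(\frac{y+j}{N}\right) dy.
\]
The hypothesis $\int_\R q = 0$ is then exploited in two complementary ways: as a license to integrate by parts (Case~1), and as an exact cancellation mechanism in the bulk of the sum (Case~2).

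For Case~1, the antiderivative $Q(x) \de \int_{-\infty}^x q(t)\,dt$ is smooth and compactly supported (thanks to $\int q = 0$), with $\int_\R Q = -\int_\R xq(x)\,dx$. One integration by parts in each summand gives
\[
\lr{V_\# f, \og} = -\frac{1}{N^2}\sum_j u_j \int_\R Q(y)\,(fg)'\!\left(\frac{y+j}{N}\right) dy.
\]
This sum has precisely the form analyzed by Lemma~\ref{lem:1j}, with $Q$ and $(fg)'$ playing the roles of $q$ and $\varphi$. The proof of that lemma (Lyapounov's CLT plus the Cram\'er--Wold device) uses only that the weight lies in $C_0^\infty$ and has nonzero integral, both of which hold for $Q$; the non-degeneracy of the limiting covariance is guaranteed by the hypothesis $(fg)' \not\equiv 0$ on $[-1,1]$. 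Pulling out the factor $\int_\R Q = -\int_\R xq$ then yields
\[
\frac{N^{3/2}}{\int_\R xq(x)\,dx}\,\lr{V_\# f,\og}\;\law\;\NN\bigl(0,\Sigma[(fg)']\bigr).
\]

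For Case~2, let $c$ denote the common value of $fg$ on $[-1,1]$; since $fg \in C^\infty(\R,\C)$ is identically $c$ there, every derivative of $fg$ vanishes at $\pm 1$, and Taylor's formula gives $|(fg)(x) - c| = O(|x \mp 1|^k)$ for every $k$ near $\pm 1$. Using $\int_\R q = 0$ to insert $-c$ under the integral,
\[
\lr{V_\# f,\og} = \frac{1}{N}\sum_j u_j \int_\R q(y)\bigl[(fg)((y+j)/N) - c\bigr] dy;
\]
if $\supp q \subset [-M,M]$, the argument $(y+j)/N$ stays in $[-1,1]$ for every $y \in \supp q$ whenever $|j| \leq N-M$, so the integrand vanishes identically in the bulk, leaving only the $O(1)$ boundary indices $N-M < |j| \leq N$. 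For these, $|(y+j)/N \mp 1| \leq 2M/N$, so the flatness estimate combined with the boundedness of $q$ gives each boundary integral size $O(N^{-k})$ for every $k$. Together with the almost-sure bound on the $u_j$'s this yields the deterministic estimate $\lr{V_\# f,\og} = O(N^{-k-1})$ for every $k$, and in particular $\lr{V_\# f,\og} = O(N^{-3})$. The one nontrivial subtlety is the Case~1 observation that Lemma~\ref{lem:1j} continues to apply verbatim with an arbitrary $C_0^\infty$ weight of nonzero integral; the only non-obvious ingredient in Case~2 is the infinite-order flatness of $fg$ at $\pm 1$, which comes directly from $fg \in C^\infty$ and $fg \equiv c$ on $[-1,1]$.
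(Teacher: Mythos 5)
Your proof is correct, and Case~1 coincides with the paper's argument (integrate by parts once with the compactly supported antiderivative $Q$, then feed the result into Lemma~\ref{lem:1j} with $Q$ playing the role of $q$ and $(fg)'$ playing the role of $\varphi$; you are right that the proof of Lemma~\ref{lem:1j} only needs a $C_0^\infty$ weight of nonzero integral, and $\int_\R Q = -\int_\R xq \neq 0$ supplies the correct normalization up to a sign that the symmetric Gaussian absorbs).

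Case~2, however, is handled by a genuinely different route. The paper \emph{also} integrates by parts in Case~2 and then works with $(fg)'$: the integral becomes a sum of $O(1)$ boundary terms (the $j$ with $|j|\geq N-L$), for which it uses only the first-order Taylor expansion $(fg)'\bigl((x+j)/N\bigr) = (fg)'(\pm 1) + O(N^{-1}) = O(N^{-1})$, arriving at exactly the claimed $O(N^{-3})$. You instead remain at the level of $fg$, use $\int q = 0$ to subtract the constant $c = fg|_{[-1,1]}$ rather than to integrate by parts, perform the same localization to the $O(1)$ boundary indices, and then exploit the full $C^\infty$ flatness of $fg - c$ at $\pm 1$ (all derivatives vanish there by one-sided continuity from the constant interior). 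This gives the stronger bound $O(N^{-K})$ for every $K$, of which $O(N^{-3})$ is a special case. Both arguments are correct; the paper's is slightly more economical (first-order Taylor is all it needs), while yours exposes the fact that the estimate in the degenerate case is actually $O(N^{-\infty})$, which is a worthwhile observation even though the application in Theorem~\ref{thm:2} does not require it.
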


\begin{rmk} In practice, we can have $(fg)' \equiv 0$ on $[-1,1]$: for instance if $q_0 = 0$ and $\lambda_0 = 0$, then $f$ and $g$ are constant functions -- see the discussion following Corollary \ref{cor:2}. \end{rmk}

\begin{proof} As above,
\begin{equation*}
\lr{V_\# f, \overline{g}} = \dfrac{1}{N} \sum_j u_j \int_{\R} q(x) (fg)\left(\dfrac{x+j}{N}\right) dx.
\end{equation*}
Since $\int_{\R^d}q(x)dx = 0$, there exists a unique $Q \in C_0^\infty(\R)$ such that $Q'=q$. Integrating by parts in the above yields
\begin{equation*}
 \lr{V_\# f, \overline{g}} =  -\dfrac{1}{N^2} \sum_j u_j \int_{\R} Q(x) (fg)'\left(\dfrac{x+j}{N}\right) dx.
\end{equation*}
If $(fg)'$ is not identically vanishing on $[-1,1]$, then the first implication of \eqref{eq:1j} follows from $\int_\R xq(x) dx = \int_\R Q(x) dx$ and Lemma \ref{lem:1j}.

If now $(fg)'$ vanishes on $[-1,1]$ then
\begin{equation}\label{eq:2l}
\lr{V_\# f, \overline{g}} = -\dfrac{1}{N^2} \sum_j u_j \int_{|x+j| \geq N} Q(x) (fg)'\left(\dfrac{x+j}{N}\right) dx.
\end{equation}
Let $L > 0$ such that $\supp(Q) \subset [-L,L]$. The indices $j$ such that $Q$ does not vanish identically on the set $\{|x+j| \geq N\}$ must satisfy $|x+j| \geq N$ for some $|x| \leq L$, in particular $|j| \geq N-L$. This happens for at most $2L$ values of the $j$'s, that must remain at fixed distance from $\pm N$. For such $j$'s,
\begin{equation*}
(fg)'\left(\dfrac{x+j}{N}\right) = (fg)'\left(\dfrac{x+j \mp N}{N} \pm 1\right) = (fg)'(\pm 1) + O(N^{-1}) = O(N^{-1}),
\end{equation*}
uniformly for $x \in  \supp(Q)$. Hence, the sum \eqref{eq:2l} is realized effectively over finitely many $j$, and each term is of order $O(N^{-1})$. This leads to $\lr{V_\# f, \overline{g}} = O(N^{-3})$.
\end{proof}

\begin{lem}\label{lem:1h} Assume that $d \geq 3$ or that $d=1$ and $\int_\R q(x)dx = 0$. Define
\begin{equation}\label{eq:3y}
L \de \dfrac{1}{(2\pi)^d}\int_{\R^d} \dfrac{\hq(\xi) \hq(-\xi)}{|\xi|^2} d\xi \cdot \int_{[-1,1]^d} f(x)g(x) dx
\end{equation}
Then, there exists $r_0 > 0$ such that for any $\lambda \in \Dd(\lambda_0,r_0)$,
\begin{equation*}
\Pp\left( \left|N^2 \lr{L^{\lambda_0}_{q_0}(\lambda) V_\# f, \overline{V_\# g}} - L \right| \geq 2t \right) \leq C \exp\left(-\dfrac{ctN^{1/2}}{\ln(N)} \right).
\end{equation*}
\end{lem}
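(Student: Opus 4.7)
The plan is to expand $\lr{L^{\lambda_0}_{q_0}(\lambda) V_\# f, \overline{V_\# g}}$ as a quadratic form in the independent random variables $u_j$ and apply the Hanson--Wright inequality from Lemma \ref{lem:1a}:
\begin{equation*}
\lr{L^{\lambda_0}_{q_0}(\lambda) V_\# f, \overline{V_\# g}} = \sum_{j,\ell \in [-N,N]^d} u_j u_\ell\, \az_{j\ell}(\lambda), \quad \az_{j\ell}(\lambda) \de \blr{L^{\lambda_0}_{q_0}(\lambda)(q(N\cdot - j) f),\, q(N\cdot - \ell) g}.
\end{equation*}
This reduces the analysis to two deterministic tasks: an asymptotic evaluation of $N^2 \trace(\az)$ (expected to converge to $L$) and a bound on $|\az|_\HS$ (controlling the fluctuations).

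For the mean, the main contribution to each $\az_{j\ell}$ comes from the singular part of the Schwartz kernel of $L^{\lambda_0}_{q_0}(\lambda)$ on the diagonal, which agrees with that of the free resolvent $R_0(\lambda)$ modulo a smoother correction $T(\lambda) = L^{\lambda_0}_{q_0}(\lambda) - R_0(\lambda) = -R_0(\lambda) q_0 R_{q_0}(\lambda) - if \otimes g/(\lambda-\lambda_0)$; using the $L^2 \to H^2$ and $H^{-2} \to L^2$ mapping properties from Lemma \ref{lem:1i}, the contribution of $T(\lambda)$ can be shown to be of lower order. For the leading $R_0(\lambda)$ piece, Plancherel combined with the rescaling identity
\begin{equation*}
\widehat{q(N\cdot-j)f}(\xi) = \frac{e^{-i\xi j/N}}{N^d}\bigl(f(j/N)\hq(\xi/N) + O(N^{-1})\bigr)
\end{equation*}
and the substitution $\xi = N\eta$ yield
\begin{equation*}
\az_{j\ell}(\lambda) = \frac{f(j/N)g(\ell/N)}{(2\pi)^d N^{d+2}} \int_{\R^d} e^{-i\eta(j-\ell)} \frac{\hq(\eta)\hq(-\eta)}{|\eta|^2 - \lambda^2/N^2}\, d\eta + O(N^{-(d+3)}).
\end{equation*}
Under the hypothesis ($d\geq 3$, or $d=1$ with $\int q = 0$), the function $\hq(\eta)\hq(-\eta)/|\eta|^2$ is integrable at the origin; dominated convergence then removes $\lambda^2/N^2$ in the $N\to\infty$ limit. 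Specializing to $j=\ell$, summing, and recognizing $N^{-d}\sum_j f(j/N)g(j/N) \to \int_{[-1,1]^d} fg\, dx$ as a Riemann sum yields $N^2 \trace(\az) = L + O(N^{-1})$.

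For the Hilbert--Schmidt bound, define $K(m) \de \int e^{-i\eta m}\hq(\eta)\hq(-\eta)/|\eta|^2\, d\eta$. In $d=1$ with $\int q = 0$, the integrand is Schwartz (the $1/\eta^2$ singularity is cancelled by the vanishing of $\hq$), so $K$ is Schwartz and $\sum_m |K(m)|^2 = O(1)$; in $d\geq 3$, $K$ equals the convolution of the compactly supported $q*q(-\cdot)$ with the Newton potential $c_d|x|^{-(d-2)}$ and satisfies $|K(m)| \leq C|m|^{-(d-2)}$. In either case the expression for $\az_{j\ell}$ gives
\begin{equation*}
|N^2\az|_\HS^2 \leq \frac{C}{N^{2d}} \sum_{j,\ell \in [-N,N]^d} |K(j-\ell)|^2 \leq \frac{C}{N^{d}} \sum_{|m| \leq 2N} |K(m)|^2 = O(N^{-1})
\end{equation*}
in the worst dimension $d=1$. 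Applying Lemma \ref{lem:1a} to the off-diagonal (trace-free) part of $N^2\az$, together with Hoeffding for the independent diagonal sum $\sum_j (u_j^2-1)(N^2\az_{jj})$, delivers an exponential tail of the form $C\exp(-ct N^{1/2})$; the $1/\ln N$ in the claim absorbs the deterministic mean error $O(N^{-1})$ and the threshold $t^2 \geq 2|\trace|$ built into Lemma \ref{lem:1a}.

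The main obstacle I anticipate is making the Fourier approximations above uniform in $\lambda \in \Dd(\lambda_0, r_0)$, especially removing the $\lambda^2/N^2$ correction at a controlled rate in dimension one where integrability at $\eta=0$ is marginal. The pole at $|\eta|\sim |\lambda|/N$ must be handled carefully via principal-value or contour-deformation arguments (depending on the position of $\lambda$ relative to the real axis); I expect the $\ln N$ factor in the stated exponent to originate from this step. A secondary technical point is verifying that the smoother correction $T(\lambda)$ contributes negligibly not just to the trace but also to the Hilbert--Schmidt norm, since its kernel is not compactly supported and the pointwise bound from Lemma \ref{lem:1i} must be combined with the oscillatory structure of the bumps to extract the required decay in $j-\ell$.
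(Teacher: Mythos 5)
Your overall architecture matches the paper's: write the pairing as a quadratic form $\sum u_j u_\ell \az_{j\ell}(\lambda)$, separate $L^{\lambda_0}_{q_0}$ into a free-resolvent part and a smoother remainder, evaluate the trace as a Riemann sum to extract $L$, bound the Hilbert--Schmidt norm, and close with Hanson--Wright. Two points, however, are genuine gaps rather than just deferred technicalities. First, your decomposition $L^{\lambda_0}_{q_0}(\lambda) = R_0(\lambda) + T(\lambda)$ with $T$ ``smoother'' fails precisely in the case $d=1$, $\lambda_0 = 0$: there $R_0(\lambda)$ itself has a simple pole at $\lambda = 0$, while $L^{\lambda_0}_{q_0}$ is by definition the regular part of $R_{q_0}$, so $T(\lambda)$ would have to carry an equal and opposite pole and is not a lower-order correction. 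The paper handles this case in a separate Step 8 by substituting the explicit kernel $K(\lambda,|x-y|) = \frac{i}{2\lambda}(e^{i\lambda r}-1)$ for the regularized resolvent and estimating $\beta_{j\ell}$ directly via integration by parts in $Q$ with $Q'=q$; nothing in your proposal covers this regime.

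Second, your worry about controlling the Hilbert--Schmidt norm of the $T(\lambda)$-contribution (your ``secondary technical point'') is a red herring that you should not try to resolve: the paper never forms an HS estimate for that piece. Instead it bounds the whole quantity $\bigl|\oint \lr{A(\mu)f,\overline{V_\# g}}\,d\mu/(\mu-\lambda)\bigr|$ pathwise by $C|V_\#|_{\HH^{-2}}^2$ using only the $H^{-2}\to L^2$ mapping of $\rho R_0(\mu)\rho$, and then plugs in the exponential tail for $|V_\#|_{\HH^{-2}}$ from Lemma~\ref{lem:1d}; the cost is an $O(e^{-ctN})$ event, better than what Hanson--Wright on the $R_0$ piece gives. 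You should adopt this route rather than trying to extract decay of the $T$-kernel in $j-\ell$. Beyond these two items, your ``main obstacle'' paragraph correctly identifies where the real work lies -- uniformizing in $\lambda$ across the real axis. The paper accomplishes this via the three lines theorem (Steps 3--5), first bounding $\az_{j\ell}(\lambda)$ for $\Im\lambda\geq 1$, transferring to $\Im\lambda\leq-1$ using that $\rho(R_0(\lambda)-R_0(-\lambda))\rho$ is smoothing, and interpolating on the strip $|\Im\lambda|\leq 1$. Note also that the $\ln N$ in the exponent does not come from the $\lambda$-uniformization as you guessed: it arises from the genuinely log-divergent radial integrals $\int_0^N r(1+r^2)^{-1}dr$ that appear in the off-diagonal $\az_{j\ell}$ bound when $d=1$ or $d=3$; your Newton-potential heuristic for $K(j-\ell)$ would, if the $\lambda^2/N^2$ perturbation of the denominator could actually be removed uniformly, avoid this log, but that removal is exactly what is hard near the real axis.
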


\begin{proof} In the Steps 1 to 7 below, we assume that $\lambda_0 \neq 0$ if $d=1$. In the Step 8, we deal with the case $\lambda = 0$ and $d=1$ -- which requires a special (though simpler) treatment.

\textbf{Step 1.} Fix $r_0 > 0$ such that $q_0$ has no resonance on $\Dd(\lambda_0,r_0) \setminus \lambda_0$. In order to estimate $\lr{L^{\lambda_0}_{q_0}(\lambda) V_\# f, \overline{V_\# g}}$, we first use \eqref{eq:2c}:
\begin{equation*}
L^{\lambda_0}_{q_0}(\lambda) = \dfrac{1}{2\pi i} \oint_{\lambda_0} \dfrac{R_{q_0}(\mu)}{\mu-\lambda} d\mu
\end{equation*}
(the contour integral is realized over $\p\Dd(\lambda_0,r_0)$). We combine the identities $R_{q_0}(\mu) = R_0(\mu) \left( \Id + q_0 R_0(\mu) \rho\right)^{-1} \left(\Id - q_0 R_0(\mu) (1-\rho) \right)$ with $(1-\rho) V_\# = 0$ to obtain $R_{q_0}(\mu) V_\# = R_0(\mu) V_\# + A(\mu)$, where
\begin{equations*}
A(\mu) \de - R_0(\mu) \left( \Id + q_0 R_0(\mu) \rho\right)^{-1} q_0 R_0(\mu) V_\#.
\end{equations*}
It follows that
\begin{equations}\label{eq:4a}
\lr{ L^{\lambda_0}_{q_0}(\lambda) V_\# f, \overline{V_\# g}} = \dfrac{1}{2\pi i} \oint_{\lambda_0} \dfrac{\lr{ R_0(\mu)V_\# f, \overline{V_\# g}}}{\mu-\lambda} d\mu  - \dfrac{1}{2\pi i} \oint_{\lambda_0} \dfrac{ \lr{A(\mu)  f, \overline{V_\# g}}}{\mu-\lambda} d\mu \\
 = \lr{R_0(\lambda) V_\# f, \overline{V_\# g}} -  \dfrac{1}{2\pi i} \oint_{\lambda_0} \dfrac{ \lr{A(\mu)  f, \overline{V_\# g}}}{\mu-\lambda} d\mu.
\end{equations}

\textbf{Step 2.} Since resonances of $q_0$ form a discrete set, $q_0$ has no resonances on a sufficiently small punctured neighborhood $U$ of $\lambda$. Therefore, the operator $\Id + q_0 R_0(\mu) \rho$ is invertible on $U$. Its family of $L^2$-inverses is analytic, hence by the Banach--Steinhauss theorem their operator norms are uniformly bounded on compact subsets of $U$. In addition, 
\begin{equation*}
\left( \Id + q_0 R_0(\mu) \rho\right)^{-1} q_0 = \rho (\Id + q_0 R_0(\mu) \rho)^{-1} q_0
\end{equation*}
(this can be checked expanding $\left( \Id + q_0 R_0(\mu) \rho\right)^{-1}$ by a Neumann series for $\Im \mu \gg 1$, and meromorphic continuation for $\mu \in \C \setminus \Res(q_0)$, see for instance the proof of \eqref{eq:0d}). Therefore,
\begin{equation}\label{eq:1p}
\left| \dfrac{1}{2\pi i} \oint_{\lambda_0} \dfrac{ \lr{A(\mu)  f, \overline{V_\# g}}}{\mu-\lambda} d\mu \right| \leq C \sup_{\mu \in \p\Dd(\lambda,r)}|\rho R_0(\mu) V_\# f||\rho R_0(\mu)^* V_\# g|.
\end{equation}
Since $R_0(\mu)$ and its adjoint $R_0(\mu)^* = R_0(-\overline{\mu})$ map $H^{-2}$ to $L^2$, since $f$ and  $g$ are smooth functions, the right hand side of \eqref{eq:1p} is controlled by $C |V_\#|_{\HH^{-2}}^2$. By Lemma \ref{lem:1d}, 
\begin{equation*}
\Pp\left( N^2\left|\dfrac{1}{2\pi i} \oint_{\lambda_0} \dfrac{ \lr{A(\mu)  f, \overline{V_\# g}}}{\mu-\lambda} d\mu \right| \geq t \right) \leq  \Pp(CN^2|V_\#|^2_{\HH^{-2}} \geq t) \leq Ce^{-ctN^{2\gamma-1}} \leq Ce^{-ctN}.
\end{equation*}
We used that the under the assumptions of Lemma \ref{lem:1h}, $\gamma \geq 3/2$. This estimate and \eqref{eq:4a} imply
\begin{equations*}
\Pp\left( \left|N^2 \lr{L^{\lambda_0}_{q_0}(\lambda) V_\# f, \overline{V_\# g}} - L \right| \geq 2t \right) 
\leq \Pp\left( \left|N^2 \lr{R_0(\lambda) V_\# f, \overline{V_\# g}} -L \right| \geq t \right) + O(e^{-ctN}).
\end{equations*}
Hence, with high probability and for $\lambda \in \Dd(\lambda_0,r_0)$, the terms $N^2 \lr{L^{\lambda_0}_{q_0}(\lambda) V_\# f, \overline{V_\# g}}$ and $\lr{R_0(\lambda) V_\# f, \overline{V_\# g}}$ are comparable. In contrast with $N^2 \lr{L^{\lambda_0}_{q_0}(\lambda) V_\# f, \overline{V_\# g}}$, the term $N^2 \lr{R_0(\lambda) V_\# f, \overline{V_\# g}}$ is meromorphic on the domain of holomorphy $X_d$ of $R_0(\lambda)$: 
\begin{equation*}
X_d = \C \text{ if } d \geq 3, \ \ X_d = \C \setminus 0 \text{ if } d = 1.
\end{equation*}
We write
\begin{equation*}
\lr{R_0(\lambda) V_\# f, \overline{V_\# g}} = \sum_{j,\ell} u_j u_\ell \alpha_{j\ell}(\lambda), \ \ \ \
\alpha_{j\ell}(\lambda) \de \lr{R_0(\lambda) q(N\cdot-j) f, \overline{q(N\cdot-\ell) g}}.
\end{equation*}
In the Steps 3, 4 and 5 below, we estimate the terms $\az_{j\ell}(\lambda)$ for $\Im \lambda \geq 1$, $\Im \lambda \leq -1$ and $|\Im \lambda| \leq 1$, respectively.

\textbf{Step 3.} We assume that $\Im \lambda \geq 1$. In this case, the operator $R_0(\lambda) = (-\Delta - \lambda^2)^{-1}$ is a Fourier multiplier with symbol $(|\xi|^2 - \lambda^2)^{-1}$. Using the Plancherel's identity and the substitutions $x \mapsto (x+j)/N$, $y \mapsto (y+\ell)/N$ and $\xi \mapsto N\xi$, we obtain
\begin{equations*}
\alpha_{j\ell}(\lambda) = \dfrac{1}{(2\pi)^d}\int_{\R^d} \dfrac{1}{|\xi|^2 - \lambda^2} \int_{\R^d} e^{-i\xi x} q(Nx-j) f(x) dx \int_{\R^d} e^{i\xi y} q(Ny-\ell) g(y) dy  d\xi \\
% = \dfrac{1}{N^d} \int_{\R^d} \dfrac{e^{i\xi (\ell-j)}}{N^2|\xi|^2 - \lambda^2} \int_{\R^d} e^{-i\xi x} q(x) u\left( \dfrac{x+j}{N} \right) dx \int_{\R^d} e^{i\xi y} q(y) v\left( \dfrac{y+ \ell}{N} \right) dy  d\xi \\
 = \dfrac{1}{(2\pi N)^d}\int_{\R^d} \dfrac{e^{i\xi (\ell-j)} \zeta_{j\ell}(\xi)}{N^2|\xi|^2 - \lambda^2} d\xi, \ \ \ \ \zeta_{j\ell}(\xi) \de \int_{\R^d} e^{-i\xi x} q(x) f\left( \dfrac{x+j}{N} \right) dx \int_{\R^d} e^{i\xi y} q(y) g\left( \dfrac{y+ \ell}{N} \right) dy.
\end{equations*} 
The function $\zeta_{j\ell}$ is the product of Fourier transforms of functions whose derivatives are all bounded uniformly in $x, j, \ell, N$. It follows that
\begin{equation}\label{eq:1k}
\zeta_{j\ell}(\xi) = O(\lr{\xi}^{-\infty}), \ \ \ \ \p_\xi \zeta_{j\ell}(\xi) = O(\lr{\xi}^{-\infty}).
\end{equation}
If in addition $d=1$, then $\int_{\R} q(x) dx = \hq(0) = 0$ according to the assumptions of the statement. Therefore, we can use the Taylor expansion \eqref{eq:3r} to obtain 
\begin{equation*}
d=1 \ \Rightarrow \ \int_{\R} e^{-ix\xi} q(x) u\left( \dfrac{x+j}{N} \right) dx = u\left(\dfrac{j}{N}\right) \hq(\xi) + O(N^{-1}) = O(|\xi| + N^{-1}),
\end{equation*}
uniformly for $\xi$ in bounded sets, $N \geq 1$ and $j \in [-N,N]$.
Using that a similar estimate is avalaible when $u$ is replaced by $v$ and $j$ is replaced by $\ell$, we get
\begin{equations}\label{eq:1l}
d=1 \ \Rightarrow \ |\zeta_{j\ell}(\xi)| =  O(|\xi|^2+N^{-2}) \ \ \ \ |\p_\xi \zeta_{j\ell}(\xi)| = O(|\xi| + N^{-1}).
\end{equations}

We now estimate  $\az_{jj}(\lambda)$. When $d \geq 3$, we split the integral over $\xi$ near $0$ and $\xi$ away from $0$ and we use the bounds \eqref{eq:1k} on $\zeta_{jj}(\xi)$:
\begin{equations}\label{eq:1m}
|\alpha_{jj}(\lambda)| \leq \dfrac{1}{(2\pi N)^d}\int_{\R^d} \dfrac{|\zeta_{jj}(\xi)|}{|N^2|\xi|^2 - \lambda^2|} d\xi 
\leq \dfrac{C}{N^d}\int_{|\xi| \leq 1} \dfrac{|\zeta_{jj}(\xi)|}{|N^2|\xi|^2 - \lambda^2|} d\xi + \dfrac{1}{N^d}\int_{|\xi| \geq 1} \dfrac{\lr{\xi}^{-2d}}{|N^2|\xi|^2 - \lambda^2|} d\xi \\
\leq \dfrac{C}{N^d}\int_{r=0}^1 \dfrac{\sup_{\w \in \Ss^{d-1}} |\zeta_{jj}(r\w)|}{|N^2 r^2 - \lambda^2|} r^{d-1} dr + \dfrac{C}{N^d}\int_{r=1}^\infty \dfrac{\lr{r}^{-d-1}}{|N^2r^2 - \lambda^2|} dr \\
\leq \dfrac{C}{N^{2d}}\int_{r=0}^N \dfrac{\sup_{\w \in \Ss^{d-1}} |\zeta_{jj}(r\w/N)|}{|r^2 - \lambda^2|} r^{d-1} dr + \dfrac{C}{N^d}\int_{r=1}^\infty \dfrac{\lr{r}^{-d-1}}{|N^2r^2 - \lambda^2|} dr \\
\leq \dfrac{C \lr{\lambda}}{N^{2d}}\int_{r=0}^N \dfrac{\sup_{\w \in \Ss^{d-1}} |\zeta_{jj}(r\w/N)|}{r^2+1} r^{d-1} dr + \dfrac{C\lr{\lambda}}{N^{d+2}}\int_{r=1}^\infty \lr{r}^{-d-1} dr.
\end{equations}
In the last line, we used the bound
\begin{equation}\label{eq:1r}
\Im \lambda \geq 1 \ \ \Rightarrow \ \ \sup_{r \geq 0}\dfrac{r^2 + 1}{|r^2-\lambda^2|} \leq C \lr{\lambda}.
\end{equation}
The second integral in the last line of \eqref{eq:1m} is finite and the contribution of the corresponding term is $O(\lr{\lambda} N^{-d-2})$. In dimension $d \geq 3$, we control the first integral by observing that the function $r \mapsto r^{d-1} \lr{r}^{-2}$ grows like $r^{d-3}$, which is not integrable. We deduce from \eqref{eq:1k} that
\begin{equation}\label{eq:4b}
d \geq 3 \ \Rightarrow \ \dfrac{C \lr{\lambda}}{N^{2d}}\int_{r=0}^N \dfrac{\sup_{\w \in \Ss^{d-1}} |\zeta_{jj}(r\w/N)|}{r^2+1} r^{d-1} dr \leq \dfrac{C \lr{\lambda}}{N^{d+2}}.
\end{equation}
When $d=1$, the estimate \eqref{eq:1l} implies
\begin{equations*}
\dfrac{C \lr{\lambda}}{N^2}\int_{r=0}^N \dfrac{\sup_{\w \in {\pm 1}}|\zeta_{jj}(r\w/N)|}{r^2+1} dr \leq \dfrac{C \lr{\lambda}}{N^2}  \int_{r=0}^N \dfrac{r^2N^{-2} + N^{-2}}{r^2+1} dr \leq \dfrac{C \lr{\lambda}}{N^3} = \dfrac{C}{N^{1+2}}.
\end{equations*}
This proves the validity of \eqref{eq:4b} for all $d \geq 1$. Combining the above estimates together, we obtain that for any odd $d$, uniformly in $\lambda$ with $\Im \lambda \geq 1$, $N$ and $j \in [-N,N]^d$,
\begin{equation*}
|\az_{jj}(\lambda)| \leq \dfrac{C\lr{\lambda}}{N^{d+2}}.
\end{equation*}

We now estimate $\az_{j\ell}(\lambda)$, for $j \neq \ell$. We first integrate by parts in $\xi$, using the identity $(j-\ell) D_\xi e^{i \xi (\ell-j)} = |j-\ell|^2 e^{i \xi (\ell-j)}$:
\begin{equations}\label{eq:1n}
\az_{j\ell}(\lambda) = \dfrac{1}{(2\pi N)^d}\int_{\R^d} \dfrac{(j-\ell) D_\xi e^{i \xi (\ell-j)}}{|j-\ell|^2} \cdot \dfrac{\zeta_{j\ell}(\xi)}{N^2|\xi|^2 - \lambda^2}   d\xi \\
 = \dfrac{1}{(2\pi N)^d |j-\ell|^2}\int_{\R^d} e^{i \xi (\ell-j)} \left( \dfrac{(j-\ell)D_\xi \zeta_{j\ell}(\xi)}{N^2 |\xi|^2-\lambda^2} -\dfrac{2N^2(j-\ell)\xi }{i(N^2|\xi|^2-\lambda^2)^2} \zeta_{j\ell}(\xi) \right) d\xi.
\end{equations}
We argue as in the case $j= \ell$ to bound the first term. This yields 
\begin{equations*}
\left|\dfrac{1}{(2\pi N)^d |j-\ell|^2}\int_{\R^d} e^{i \xi (\ell-j)} \dfrac{(j-\ell)D_\xi \zeta_{j\ell}(\xi)}{N^2 |\xi|^2-\lambda^2} d\xi \right| \\ 
\leq \dfrac{C \lr{\lambda}}{N^{2d} |j-\ell|}\int_{r=0}^N \dfrac{\sup_{\w \in \Ss^{d-1}} |D_\xi \zeta_{j\ell}(r\w/N)|}{r^2+1} r^{d-1} dr + \dfrac{C\lr{\lambda}}{N^{d+2} |j-\ell|}\int_{r=1}^\infty \lr{r}^{-d-1} dr.
\end{equations*} 
The second integral contributes to $O(\lr{\lambda} N^{-d-2} |j-\ell|^{-1})$. So does the first, when $d \geq 3$. When $d=1$, we use \eqref{eq:1l} to obtain
\begin{equation*}
\dfrac{C \lr{\lambda}}{N^2 |j-\ell|}\int_{r=0}^N \dfrac{\sup_{\w \in \{\pm 1\}} |D_\xi \zeta_{j\ell}(r\w/N)|}{r^2+1} dr \leq \dfrac{C \lr{\lambda}}{N^2 |j-\ell|}\int_{r=0}^N \dfrac{N^{-1}r + N^{-1}}{r^2+1} dr \leq  \dfrac{C \lr{\lambda} \ln(N)}{N^3|j-\ell|}.
\end{equation*}
In order to control the second term in the second line of \eqref{eq:1n}, we use techniques similar to the case $j=\ell$ and obtain
\begin{equations*}
\left| \dfrac{C}{N^d|j-\ell|^2}\int_{\R^d} e^{i \xi (\ell-j)} \dfrac{2N^2(j-\ell)\xi }{i(N^2|\xi|^2-\lambda^2)^2} \zeta_{j\ell}(\xi)  d\xi \right| \\
\leq \dfrac{C\lr{\lambda}^2}{N^{2d-1}|j-\ell|}\int_{r = 0}^N  \dfrac{r^d \sup_{\w \in \Ss^{d-1}} |\zeta_{j\ell}(r\w/N)|}{(1+r^2)^2}   dr + \dfrac{C \lr{\lambda}^2}{N^{d+2}|j-\ell|} \int_{r=1}^\infty \lr{r}^{-d-1} dr
\end{equations*} 
The second integral is $O(\lr{\lambda}^2 N^{-d-2}|j-\ell|^{-1})$. In dimension $d \geq 5$, given that the function $r^d\lr{r}^{-4}$ is not integrable, the first integral can be controlled by $O(\lr{\lambda}^2 N^{-d-2} |j-\ell|^{-1})$. For the same reason, in dimension $3$, it can be controlled by $O(\lr{\lambda}^2 N^{-5} \ln(N) |j-\ell|^{-1})$. In dimension one, we use \eqref{eq:1l} to obtain
\begin{equation*}
\dfrac{C\lr{\lambda}^2}{N |j-\ell|}\int_{r = 0}^N  \dfrac{r \sup_{\w \in \{\pm 1\}} |\zeta_{j\ell}(r\w/N)|}{(1+r^2)^2}   dr \leq \dfrac{C\lr{\lambda}^2}{N^3|j-\ell|}\int_{r = 0}^N  \dfrac{r}{1+r^2}   dr \leq \dfrac{C\lr{\lambda}^2 \ln(N)}{N^3|j-\ell|}.
\end{equation*}
Grouping all these estimates together, we obtain that for any odd $d$, uniformly in $\lambda$ with $\Im \lambda \geq 1$, $N$ and $j \neq \ell \in [-N,N]^d$,
\begin{equation}\label{eq:2m}
|\az_{j\ell}(\lambda)| \leq \dfrac{C \lr{\lambda}^2 \ln(N)}{|j-\ell| N^{d+2}}.
\end{equation}

\textbf{Step 4.} We show here estimates for $\Im \lambda \leq -1$. In such cases, $\Im(-\lambda) \geq 1$, and we can write
\begin{equation*}
\az_{j\ell}(\lambda) = \az_{j\ell}(-\lambda) + \lr{(R_0(\lambda)-R_0(-\lambda))q(N\cdot-j) f, \overline{q(N\cdot-\ell) g}}.
\end{equation*}
The same arguments as in \cite[Lemma 4.9]{Dr2} shows that $\rho(R_0(\lambda)-R_0(-\lambda))\rho$ maps $L^2$ to $H^{2d}$, with norm controlled by  $C|\lambda|^{3d-2} e^{C|\Im \lambda|}$ -- when $\-\Im \lambda \leq -1$. By duality, it also maps $H^{-2d}$ to $L^2$ (with same norm) and by interpolation, $H^{-d}$ to $H^d$ (with same norm). It follows that
\begin{equations*}
\left|\blr{(R_0(\lambda)-R_0(-\lambda))q(N\cdot-j) f, \overline{q(N\cdot-\ell) g}}\right| \\ \leq C|\lambda|^{3d-2} e^{C|\Im \lambda|} |q(N\cdot-j) f|_{H^{-d}} |q(N\cdot-\ell) g|_{H^{-d}}.
\end{equations*}
Since $H^d$ is an algebra, the dual bound $|f_1 f_2|_{H^{-d}} \leq C|f_1|_{H^d} |f_2|_{H^{-d}}$ holds for arbitrary $f_1, f_2 \in C_0^\infty(\R^d,\C)$. Since $f$ and $g$ are smooth and $q$ has compact support,
\begin{equation*}
|q(N\cdot-j) f|_{H^{-d}} |q(N\cdot-\ell) g|_{H^{-d}} \leq C |q(N \cdot)|_{H^{-d}}^2.
\end{equation*}
A computation shows
\begin{equation*}
|q(N \cdot)|_{H^{-d}}^2 = \dfrac{1}{(2\pi N)^{d}}\int_{\R^d} \dfrac{|\hq(\xi)|^2}{(1+N^2\xi^2)^{2d}} d\xi \leq \dfrac{C}{N^{2d}}\int_0^\infty \dfrac{\sup_{\w \in \Ss^{d-1}}|\hq(r\w)|^2}{(1+N^2r^2)^{2d}} r^{d-1}dr.
\end{equation*}
Again, splitting this integral for $r$ near $0$ and $r$ away from $0$, and using $\hq(0) = 0$ in dimension one, shows that $|q(N \cdot)|_{H^{-d}}^2 = O(N^{-d-3})$. Therefore, 
\begin{equation}\label{eq:0c}
|\az_{j\ell}(\lambda)-\az_{j\ell}(-\lambda)| \leq \dfrac{C |\lambda|^{3d-2} e^{C|\Im \lambda|}}{N^{d+3}}.
\end{equation}
In particular, this bound combined with \eqref{eq:2m} shows that for $|\Im \lambda| = 1$, 
\begin{equation}\label{eq:1s}
|\az_{jj}(\lambda)| \leq  \dfrac{C\lr{\lambda}^{3d-2}}{N^{d+2}}, \ \ |\az_{j\ell}(\lambda)| \leq \dfrac{C \lr{\lambda}^2 \ln(N)}{|j-\ell| N^{d+2}} + \dfrac{C |\lambda|^{3d-2}}{N^{d+3}}.
\end{equation}

\textbf{Step 5.} We now use the three lines theorem to estimate locally $\az_{j\ell}(\lambda)$, for any $\lambda \in X_d$. Because of the conclusions of Steps 3 and 4, it suffices to focus on the strip $|\Im \lambda| \leq 1$. For $d \geq 3$, the function $\az_{j\ell}(\lambda)$ is bounded in this strip:
\begin{equation*}
|\az_{j\ell}(\lambda)| = |\lr{R_0(\lambda)q(N\cdot-j) f, \overline{q(N\cdot-\ell) g}}| \leq C e^{C|\Im \lambda|} |q|_\infty^2 \cdot \sup_{x,y \in \supp(q)^2} |u(x)||v(y)|.
\end{equation*}
The function $(\lambda+2i)^{-3d} \az_{j\ell}(\lambda)$ is also bounded in the strip $|\Im \lambda| \leq 1$, and \eqref{eq:1s} estimates it on the edge of this strip. The three lines theorem imply that the bound \eqref{eq:1s} holds uniformly inside the strip $|\Im \lambda| \leq 1$.

When $d=1$, we remove the pole of $R_0(\lambda)$ at $0$ by considering the function $\lambda \az_{j\ell}(\lambda)$ instead. The same arguments as in the case $d \geq 3$ extends \eqref{eq:1s} to $|\Im \lambda| \leq 1$, $\lambda \neq 0$. In particular, for any $\lambda \in X_d$, there exists $C > 0$ such that uniformly in $j, \ell, N$,
\begin{equation*}
|\az_{jj}(\lambda)| \leq  \dfrac{C}{N^{d+2}}, \ \ |\az_{j\ell}(\lambda)| \leq \dfrac{C\ln(N)}{|j-\ell| N^{d+2}} + \dfrac{C}{N^{d+3}}.
\end{equation*}

\textbf{Step 6.} We now estimate the HS-norm of $\az_{j\ell}(\lambda)$ for $\lambda$ in compact subsets of $X_d$, so that we can apply later the Hanson--Wright inequality. According to Step 5, it suffices to estimate the sum
\begin{equation*}
\sum_j \dfrac{1}{N^{2d+4}} + \sum_{j\neq \ell} \dfrac{\ln(N)^2}{N^{4+2d} |j-\ell|^2} + \sum_{j \neq \ell} \dfrac{1}{N^{2d+6}}.
\end{equation*}
Given $m$, the number of sites $j,\ell$ such that $|j-\ell| = m$ is controlled by $N^{2d-1}$. Therefore,
\begin{equations*}
\sum_j \dfrac{1}{N^{2d+4}} + \sum_{j\neq \ell} \dfrac{\ln(N)^2}{N^{4+2d} |j-\ell|^2} + \sum_{j \neq \ell} \dfrac{1}{N^{2d+6}} \\ \leq \dfrac{N^d}{N^{2d+4}} +  \dfrac{CN^{2d-1} \ln(N)^2}{N^{4+2d}} \sum_{m=1}^{2N} \dfrac{1}{m^2} + \dfrac{N^{2d}}{N^{2d+6}} \leq \dfrac{C\ln(N)^2}{N^5}. 
\end{equations*}
It follows that $|\az(\lambda)|_{\HS} = O(N^{-5/2} \ln(N))$. By the Hanson--Wright inequality (in its original version), for $\lambda$ in compact subsets of $X_d$,
\begin{equation}\label{eq:1t}
\Pp\left(N^2\left|\lr{R_0(\lambda) V_\# f, V_\# g} -  \sum_j \az_{jj}(\lambda)\right| \geq t\right) \leq C \exp \left( -\dfrac{c t N^{1/2}}{\ln(N)} \right).
\end{equation}

\textbf{Step 7.} To conclude we estimate $\sum_j \az_{jj}(\lambda)$. For $\Im \lambda \geq 1$, we have
\begin{equation*}
\sum_j \az_{jj}(\lambda) = \dfrac{1}{(2\pi N)^d} \sum_j \int_{\R^d} \dfrac{\zeta_{jj}(\xi)}{N^2|\xi|^2 - \lambda^2} d\xi.
\end{equation*}
The bound \eqref{eq:3r} shows that uniformly in $j$,
\begin{equation*}
\zeta_{jj}(\xi) = \hq(\xi)\hq(-\xi) \cdot (fg)\left(\dfrac{j}{n}\right) + O(N^{-1} \lr{\xi}^{-2d}).
\end{equation*}
It follows that
\begin{equation*}
\sum_j \az_{jj}(\lambda) = \dfrac{1}{(2\pi)^d}\int_{\R^d} \dfrac{\hq(\xi)\hq(-\xi)}{N^2|\xi|^2 - \lambda^2} d\xi \cdot \dfrac{1}{N^d}\sum_j (fg)\left(\dfrac{j}{n}\right) + O(N^{-1}) \int_{\R^d} \dfrac{\lr{\xi}^{-2d}}{|N^2|\xi|^2 - \lambda^2|} d\xi.
\end{equation*}
We recognize a Riemann sum with step $N^{-1}$ on the right hand side. Since the function $fg$ is smooth, this Riemann sum is equal to $\int_{[-1,1]^d} (fg)(x) dx$ modulo $O(N^{-1})$. In addition, we can use \eqref{eq:1r} to control the second term, and obtain
\begin{equation*}
\sum_j \az_{jj}(\lambda) = \dfrac{1}{(2\pi)^d}\int_{\R^d} \dfrac{\hq(\xi) \hq(-\xi)}{N^2|\xi|^2 - \lambda^2} d\xi \cdot \int_{[-1,1]^d} (fg)(x) dx + O(N^{-3} \lr{\lambda}).
\end{equation*}
To obtain an asymptotic of the right hand side, we observe that the function $\hq(\xi) \hq(-\xi)/|\xi|^2$ is integrable (because $\hq(0) = 0$ when $d=1$ by assumption), and
\begin{equation*}
\left|\int_{\R^d} \dfrac{\hq(\xi) \hq(-\xi)}{N^2|\xi|^2 - \lambda^2} d\xi - \int_{\R^d} \dfrac{\hq(\xi) \hq(-\xi)}{N^2|\xi|^2} d\xi\right| \leq \int_{\R^d} \dfrac{|\hq(\xi) \hq(-\xi)|  |\lambda|^2}{N^2 |\xi|^2 \cdot |N^2|\xi|^2 - \lambda^2|} d\xi.
\end{equation*}
We apply \eqref{eq:1r} to control the LHS:
\begin{equation*}
\int_{\R^d} \dfrac{|\hq(\xi) \hq(-\xi)| |\lambda|^2}{N^2 |\xi|^2 \cdot |N^2|\xi|^2 - \lambda^2|} d\xi \leq \int_{\R^d} \dfrac{|\hq(\xi) \hq(-\xi)| \lr{\lambda}^3}{N^2 |\xi|^2 \cdot (N^2|\xi|^2+1)} d\xi.
\end{equation*}
As in Step 3, we can split this integral in a part near $\xi=0$ and a part away from $\xi=0$. Using that $\hq$ has fast decay (and $\hq(0) = 0$ when $d=1$), an upper bound is $O(N^{-3} \lr{\lambda}^3)$, therefore $\sum_j \az_{jj}(\lambda) = $
\begin{equation*}
\dfrac{1}{(2\pi)^d N^2}\int_{\R^d} \dfrac{\hq(\xi) \hq(-\xi)}{|\xi|^2} d\xi \cdot \int_{[-1,1]^d} (fg)(x) dx + O(N^{-3} \lr{\lambda}^3) = \dfrac{L}{N^2} + O(N^{-3} \lr{\lambda}^3),
\end{equation*}
where $L$ was defined in \eqref{eq:3y}. Thanks to \eqref{eq:0c}, this estimate generalize to $\Im \lambda \leq -1$, and by the same arguments as in Step 5, for any $\lambda \in X_d$. Now, \eqref{eq:1t} yields that for $\lambda$ in compact subsets of $X_d$,
\begin{equation*}
\Pp\left(\left|N^2\lr{R_0(\lambda) V_\# f, \overline{V_\# g}} -   L \right| \geq t \right) \leq C\exp\left( -\dfrac{ctN^{1/2}}{\ln(N)} \right).
\end{equation*}

\textbf{Step 8.} Here we deal with the case $d=1$ and $\lambda_0=0$. Step 1 goes through and yields
\begin{equation*}
\lr{L^0_{q_0}(\lambda) V_\# f, \overline{V_\# g}} = \dfrac{1}{2\pi i} \oint_0 \dfrac{\lr{ R_0(\mu)V_\# f, \overline{V_\# g}}}{\mu-\lambda} d\mu  - \dfrac{1}{2\pi i} \oint_0 \dfrac{ \lr{A(\mu) f, \overline{V_\# g}}}{\mu-\lambda} d\mu.
\end{equation*}
Since $R_0(\mu)$ has a simple pole at $0$, we obtain
\begin{equation*}
\lr{L^0_{q_0}(\lambda) V_\# f, \overline{V_\# g}} = \lr{ L^0_0(0)V_\# f, \overline{V_\# g}} - \dfrac{1}{2\pi i} \oint_0 \dfrac{ \lr{A(\mu)  f, \overline{V_\# g}}}{\mu-\lambda} d\mu.
\end{equation*}
The same method as in Step 2 above shows that
\begin{equation*}
\Pp\left( N^2\left|\dfrac{1}{2\pi i} \oint_{\lambda_0} \dfrac{ \lr{A(\mu)  f, \overline{V_\# g}}}{\mu-\lambda} d\mu \right| \geq t \right) \leq Ce^{-ctN}.
\end{equation*}

\newcommand{\tK}{{\tilde{K}}} We now need to estimate $N^2 \lr{L^0_0(\lambda) V_\# f, \overline{V_\# g}}$. The kernel of $L^0_0(\lambda)$ is explicitly given by $(x,y) \mapsto K(\lambda, |x-y|)$, where $K(\lambda, r) = \frac{i}{2\lambda} (e^{i\lambda r}-1)$ -- see for instance \cite[(2.2.1)]{DZ}. Therefore, we can write 
\begin{equation*}
\lr{L^0_0(\lambda) V_\# f, \overline{V_\# g}} = \sum_{j,\ell} \beta_{j\ell} u_ju_\ell, \ \ \beta_{j\ell}(\lambda) \de \int_{\R^2} K(\lambda,|x-y|) q(Nx-j) q(Ny-\ell) f(x) g(y) dx dy.
\end{equation*}
We estimate the term $\beta_{j\ell}(\lambda)$. For this purpose, we define
\begin{equation*}
\gamma[\tK,q_1,q_2,\tu,\tw] \de \dfrac{1}{N^2} \int_{\R^2} \tK \left(\lambda,\left|\dfrac{x+j}{N} - \dfrac{y+\ell}{N}\right|\right) q_1(x) q_2(y) \tu \left(\dfrac{x+j}{N}\right) \tw\left( \dfrac{y+\ell}{N} \right) dx dy,
\end{equation*}
where $q_1$, $q_2$ are smooth with compact support, $\tu$, $\tw$ are $C^\infty$ functions on $\R$, and $\tK : \C \times  \R \rightarrow \C$ is holomorphic in the first variable and locally bounded in the second. In this context, it is clear that $\gamma[\tK, q_1, q_2, \tu, \tw]$ is $O(N^{-2})$. We observe that $\beta_{j\ell}(\lambda) = \gamma[f,q,q,u,v]$. Let $Q$ be the compactly supported antiderivative of $q$. Using that $Q' = q$, we can integrate by parts $\gamma[f,q,q,u,v]$ in $x$, and get:
\begin{equation*}
\beta_{j\ell}(\lambda) = -\dfrac{\gamma[\sgn \cdot \p_2 f,Q,q,u,v] + \gamma[f,Q,q,u',v]}{N}
\end{equation*}
The function $\sgn \cdot \p_2 f$ is locally bounded but it has a discontinuity at $0$. Its derivative is the distribution $2 \delta_0 \cdot \p_2 f$. Therefore, an integration by parts in $y$ generates a boundary term:
\begin{equations*}
\beta_{j\ell}(\lambda) = -\dfrac{2}{N^3} \int_\R \p_2 K(\lambda,0) Q(x) Q(x+j-\ell) f \left(  \dfrac{x+j}{N}\right) g \left( \dfrac{y+\ell}{N} \right) dx \\ 
+\dfrac{\gamma[\p_2^2 f,Q,Q,u,v]+\gamma[\sgn \cdot \p_2 f,Q,Q,u,v'] + \gamma[\sgn \cdot \p_2 f,Q,Q,u',v] +  \gamma[f,Q,Q,u',v']}{N^2}
\end{equations*}
Because of $\gamma[\tK, q_1, q_2, \tu, \tw] = O(N^{-2})$, of $\p_2 K(\lambda, 0) =  -\frac{1}{2}$ and of \eqref{eq:3r},
\begin{equations}\label{eq:2o}
\beta_{j\ell}(\lambda) = \dfrac{1}{N^3} \int_\R Q(x) Q(x+j-\ell) dx \cdot f\left( \dfrac{j}{N} \right) g\left( \dfrac{\ell}{N} \right) + O(N^{-4}).
\end{equations}
When $|j-\ell|$ is large enough, $Q$ and $Q(\cdot+j-\ell)$ have disjoint support. Therefore, the leading term in \eqref{eq:2o} vanishes unless $j-\ell$ is smaller than a constant independent of $N$.  This yields the estimate:
\begin{equation*}
|\beta|_\HS^2 = O(N) O(N^{-6}) + O(N^2) O(N^{-8}) = O(N^{-5}).
\end{equation*}

We conclude by estimating $\sum_j \beta_{jj}(\lambda)$: recognizing a Riemann sum,
\begin{equations*}
\sum_j \beta_{jj}(\lambda) = \dfrac{1}{N^2} \int_\R Q(x)^2 dx \cdot \dfrac{1}{N}\sum_j f\left( \dfrac{j}{N} \right) g\left( \dfrac{j}{N} \right) + O(N^{-3}) \\ = \dfrac{1}{N^2} \int_\R Q(x)^2 dx \cdot \int_{-1}^1 (fg)(x) dx + O(N^{-3}).
\end{equations*}
We conclude that
\begin{equation*}
\Pp(|N^2 \lr{L^0_{q_0}(\lambda) V_\# f, \overline{V_\# g}} - L| \geq t) \leq C e^{-ct N^{1/2}}.
\end{equation*}
Since $\hat{Q}(\xi) = \hq(\xi)/\xi$, the lemma for $\lambda = 0$ and $d=1$ follows.
\end{proof}

\section{Proofs of the theorems}\label{sec:4}

\subsection{Localization}

\begin{proof}[Proof of Theorem \ref{thm:1} and of Corollary \ref{cor:1}] Assume that $q_0$ has no resonances on $\p \Dd(0,R)$. According to Lemma \ref{lem:1b}, it suffices to show that after removing a set of probability $O(e^{-cN^\gamma})$, $|V_\#|_{\HH^{-2}} \leq c' N^{-\gamma/2}$, for some $c'$ sufficiently large. This follows from Lemma \ref{lem:1d}. This concludes the proof of Theorem \ref{thm:1}.

We now show Corollary \ref{cor:1}. Fix $R > 0$ such that $q_0$ has no resonances on $\p \Dd(0,R)$. Introduce the event:
\begin{equation*}
A_N \de \left\{\ V_N^\w \text{ does not satisfy } \eqref{eq:2q}    \right\}.
\end{equation*}
We know that $\Pp(A_N) \leq Ce^{-cN^\gamma}$. In particular, $\sum_{N=0}^\infty \Pp(A_N) < \infty$, and the Borel--Cantelli lemma implies that $A_N$ happens only finitely many times. Therefore, $\Pp$-almost surely, there exists $N_0$ such that for every $N \geq N_0$, \eqref{eq:2q} is realized. It suffices to take a countable sequence $R \rightarrow \infty$ to conclude.
\end{proof}

\begin{proof}[Proof of Theorem \ref{thm:0}] The proof of this theorem follows from Lemma \ref{lem:1i} and arguments from \cite{Dr2,Dr3}. Recall that $X_d = \C$ if $d \geq 3$ and $X_1 = \C \setminus 0$. According to the formula
\begin{equation*}
R_{V_N}(\lambda) = R_0(\lambda) (\Id + V_\# R_0(\lambda) \rho)^{-1} (\Id - V_\# R_0(\lambda) (1-\rho)),
\end{equation*}
the resonances of $V = V_\#$ in $X_d$ are exactly the poles of $(\Id + V_\# R_0(\lambda) \rho)^{-1}$. In addition, \cite[Theorem 2.8]{DZ} implies
\begin{equation*}
|(V_\# R_0(\lambda) \rho)^2|_\BB \leq |V_\#|_\infty |\rho R_0(\lambda) \rho|_{H^{-2} \rightarrow L^2} |V_\#|_{\HH^{-2}} |\rho R_0(\lambda) \rho|_{L^2 \rightarrow H^2} \leq \dfrac{Ce^{c(\Im \lambda)_-}}{d-1+|\lambda|^2} |V_\#|_{\HH^{-2}}.
\end{equation*}
If the RHS is bounded by $1/2$, then the operator $\Id + V_\# R_0(\lambda) \rho$ is invertible and $\lambda$ is not a resonance. According to Lemma \ref{lem:1i}, $|V_\#|_{\HH^{-2}} \leq N^{-\gamma}$ with probability $1-Ce^{-cN^\gamma}$. Therefore, if $\Im \lambda \geq (\ln(2C)-\gamma \ln(N))/c$ then with same probability,
\begin{equation*}
\dfrac{Ce^{c(\Im \lambda)_-}}{d-1+|\lambda|^2} |V_\#|_{\HH^{-2}} < 1/2.
\end{equation*}
Since Theorem \ref{thm:0} says something only for large values of $N$,  and since the only possible resonance near $0$ was localized in Theorem \ref{thm:1}, the proof is over.\end{proof}

\subsection{Finer estimates for simple resonances}

\begin{proof}[Proof of Theorem \ref{thm:2}] Fix $\lambda_0 \in \Res(q_0)$, of multiplicity $1$, with resonant states $f$ and $g$. Recall the estimate \eqref{eq:2y}: with probability $1-O(e^{-cN^{1/4}})$,
\begin{equations}\label{eq:3z}
|V_\#|_{\HH^{-1}} \leq N^{-3/8} \ \ \text{ if } d=1 \text{ and } \int_\R q(x) dx \neq 0, \\
|V_\#|_{\HH^{-1}} \leq N^{-7/8} \ \ \text{ if } d \geq 3; \text{ or } d=1 \text{ and } \int_\R q(x) dx = 0.
\end{equations}
Let us define the following event:
\begin{equations*}
B_N = \{ |V_\#|_{\HH^{-2}} \geq N^{-\gamma/2}\} \cup \{ V_\# \text{ does not satisfy  }\eqref{eq:3z} \}.
\end{equations*}
Because of Lemmas \ref{lem:1d} and \ref{lem:1l}, $\Pp(B_N) = O(e^{-cN^{1/4}})$. Let $\delta_0, r_0$ be given by Lemma \ref{lem:1c}. As in the proof of Theorem \ref{thm:1}, we know that for $N$ sufficiently large, $V_N$ has a single resonance $\lambda_N \in \Dd(\lambda_0,r_0)$, for the event $\Omega \setminus B_N$. We extend $\lambda_N$ to all of $\Omega$ by setting $\lambda_N|_{B_N} = \lambda_0$. This definition shows that the random variable $\lambda_N$ is a resonance of $V_N$ with probability $1-\Pp(B_N) = 1 - O(e^{-cN^{1/4}})$. Moreover, Lemma \ref{lem:1c} implies that $\lambda_N$ satisfies the equation
\begin{equation}\label{eq:3b}
\lambda_N - \lambda_0 = \1_{\Omega \setminus B_N} \sum_{k = 0}^\infty (-1)^{k+1} \blr{(V_\# L_{q_0}^{\lambda_0}(\lambda_N)\rho)^k V_\# f,\overline{g}}.
\end{equation}
as long as $N$ is sufficiently large. It remains to study the speed of convergence of $\lambda_N$, in the Cases I, II and III.

\textbf{Case I.} In this case, $d=1$ or $3$ and $\int_{\R^d} q(x) dx \neq 0$, hence $\gamma=d/2$. We have:
\begin{equations}\label{eq:3c}
\dfrac{N^{d/2}}{i}(\lambda_N-\lambda_0) = \1_{\Omega \setminus B_N} N^{d/2} \sum_{k = 0}^\infty (-1)^{k+1} \blr{(V_\# L_{q_0}^{\lambda_0}(\lambda_N)\rho)^k V_\# f,\overline{g}} \\
 = -N^{d/2}\lr{V_\# f,\overline{g}} + \1_{B_N} N^{d/2} \lr{V_\# f,\overline{g}} + \1_{\Omega \setminus B_N} N^{d/2} \sum_{k = 1}^\infty (-1)^{k+1} \blr{(V_\# L_{q_0}^{\lambda_0}(\lambda_N)\rho)^k V_\# f,\overline{g}}.
\end{equations}
We show that the first and second terms in the second line of \eqref{eq:3c} converge in $L^1$ to $0$. Using that $\Pp(B_N)$ decay exponentially and that $\lr{V_\# f,g} = O(1)$,
\begin{equation*}
\Ee(\1_{B_N} N^{d/2} |\lr{V_\# f,\overline{g}}|) = O(N^{d/2} e^{-cN^{1/4}}) \rightarrow 0.
\end{equation*}
In addition, on $\Omega \setminus B_N$, $N^{d/2}|V_\#|_{\HH^{-1}}^2 \leq N^{-1/4}$ by \eqref{eq:2y} (recall that $d=1$ or $3$). The estimate \eqref{eq:1d} and the definition of $B_N$ yields
\begin{equation*}
\left|\1_{\Omega \setminus B_N} N^{d/2} \sum_{k = 1}^\infty (-1)^{k+1} \blr{(V_\# L_{q_0}^{\lambda_0}(\lambda_N)\rho)^k V_\# f,\overline{g}} \right| \leq C \1_{\Omega \setminus B_N} N^{d/2} |V_\#|_{\HH^{-1}}^2) = O(N^{-1/4}).
\end{equation*}
We deduce that
\begin{equation*}
\dfrac{N^{d/2}}{i}(\lambda_N-\lambda_0) \law N^{d/2}\lr{V_\# f,\overline{g}}.
\end{equation*}
 -- see for instance \cite[Theorem 25.4]{Bi}. Lemma \ref{lem:1e} shows that $N^{d/2}\lr{V_\# f,\overline{g}}$ converges to a Gaussian and this concludes the proof of Theorem \ref{thm:2} in Case I.

\textbf{Case II.} In this case, $d=1$ and $\int_\R q(x) dx = 0$, $\int_\R x q(x) dx \neq 0$ and $(f \cdot g)' \not\equiv 0$ on $[-1,1]$. We use \eqref{eq:3c} with a factor $N^{3/2}$ instead of $N^{1/2}$: $\frac{N^{3/2}}{i}(\lambda_N-\lambda_0) = $
\begin{equations*}
-N^{3/2}\lr{V_\# f,\overline{g}} + \1_{B_N} N^{3/2} \lr{V_\# f,\overline{g}} + \1_{\Omega \setminus B_N} N^{3/2} \sum_{k = 1}^\infty (-1)^{k+1} \blr{(V_\# L_{q_0}^{\lambda_0}(\lambda_N)\rho)^k V_\# f,\overline{g}}.
\end{equations*}
The first term converges to a Gaussian according to Lemma \ref{lem:1g}. The second term converges in $L^1$ to $0$ because $\Pp(B_N)$ is exponentially small. The third term is $O(N^{-1/4})$ because it is bounded by $N^{3/2}|V_\#|_{\HH^{-1}}^2$ -- see \eqref{eq:1d} -- itself being $O(N^{-1/4})$ for events in $\Omega \setminus B_N$ -- see \eqref{eq:3z} and the definition of $B_N$. An application of \cite[Theorem 25.4]{Bi} as in Case I allows us to conclude.

\textbf{Case III.} Thanks to \eqref{eq:3b}, we can write
\begin{equations}\label{eq:3l}
\dfrac{N^2}{i}(\lambda_N-\lambda_0) = - \1_{\Omega \setminus B_N} N^2 \lr{V_\# f, \overline{g}} + \1_{\Omega \setminus B_N} N^2 \blr{V_\# L_{q_0}^{\lambda_0}(\lambda_N) V_\# f,\overline{g}} \\
  + \1_{\Omega \setminus B_N} N^2 \sum_{k = 2}^\infty (-1)^{k+1} \blr{(V_\# L_{q_0}^{\lambda_0}(\lambda_N) \rho)^k V_\# f,\overline{g}}.
\end{equations}
We now evaluate the probability that the RHS of \eqref{eq:3l} is significantly different from $L$ -- defined in Lemma \ref{lem:1h}. Since Case III is satisfied,
\begin{equation}\label{eq:3o}
\Pp(|N^2 \lr{V_\# f, \overline{g}}| \geq N^{-1/4}) = O(e^{-cN^{1/2}}).
\end{equation}
This comes from Lemma \ref{lem:1e} when $d \geq 5$; Lemma \ref{lem:1k} when $d=3$ and $\int_{\R^3} q(x) dx  = 0$ or $d=1$ and $\int_{\R} q(x) dx = \int_\R x q(x) dx = 0$; and Lemma \ref{lem:1g} when $\int_\R q(x) dx = 0$ and $(fg)' = 0$ on $[-1,1]$. According to Lemma \ref{lem:1h} and $\Pp(B_N) = O(e^{-cN^{1/4}})$,
\begin{equations}\label{eq:3n}
\Pp(|\1_{\Omega \setminus B_N} N^2 \blr{V_\# L_{q_0}^{\lambda_0}(\lambda_N) V_\# f,\overline{g}}-L| \geq N^{-1/5}) \\ \leq \Pp(2|N^2 \blr{V_\# L_{q_0}^{\lambda_0}(\lambda_N) V_\# f,\overline{g}}-L| \geq N^{-1/5}) +  \Pp(B_N) = O(e^{-cN^{1/4}}).
\end{equations}
Since Case III is satisfied, then $m \geq 1$ when $d=1$. Hence, $|V_\#|_{\HH^{-1}} = O(N^{-7/8})$ on $\Omega \setminus B_N$ -- see \eqref{eq:2y}. Thanks to \eqref{eq:1d},
\begin{equation}\label{eq:3m}
\left|\1_{\Omega \setminus B_N} N^2 \sum_{k = 2}^\infty (-1)^{k+1} \blr{(V_\# L_{q_0}^{\lambda_0}(\lambda_N) \rho)^k V_\# f,\overline{g}} \right| \leq C \1_{\Omega \setminus B_N} N^2 |V_\#|_{\HH^{-1}}^3 = O(N^{-1/4}).
\end{equation}

Combining \eqref{eq:3l}, \eqref{eq:3o}, \eqref{eq:3n} and \eqref{eq:3m}, we obtain
\begin{equation*}
\Pp(|N^2 (\lambda_N - \lambda_0) - i L| \geq N^{-1/5}) = O(e^{-cN^{1/4}}).
\end{equation*}
In particular,
\begin{equation*}
\sum_{N = 1}^\infty \Pp(|N^2 (\lambda_N - \lambda_0) - i L| \geq N^{-1/5}) < \infty.
\end{equation*}
This implies by the Borel--Cantelli lemma that for each elementary event, $|N^2(\lambda_N-\lambda_0)-L| \geq N^{-1/5}$ for only finitely many $N$. In particular, $N^2(\lambda_N-\lambda_0) \ras i L$ as claimed. \end{proof}

\begin{proof}[Proof of Corollary \ref{cor:2}] Assume that $q_0$ is real-valued and $\lambda_0 \in \Res(q_0) \cap i \R$. Let $\lambda_N$ be the random variable constructed in the proof of Theorem \ref{thm:2}. Then $\lambda_N$ is purely imaginary. Indeed, $\lambda_N|_{B_N} = \lambda_0 \in \R$; and on $\Omega_N \setminus B_N$, $\lambda_N$ is the unique resonance of $V_N$ in $\Dd(\lambda_0,r_0)$, in particular it is purely imaginary -- otherwise $-\overline{\lambda_N}$ would be another resonance of $V_N$ in the disk $\Dd(\lambda_0,r_0)$. 

The convergence results follows now from Theorem \ref{thm:2}, from the identity $g = \overline{f}$ and from the convergence mapping theorem \cite[Theorem 25.7]{Bi}. For instance, in Case I,
\begin{equation*}
\dfrac{N^{d/2}(\lambda_N-\lambda_0)}{i\int_{\R^d} q(x) dx} = \Re \left( \dfrac{N^{d/2}(\lambda_N-\lambda_0)}{i\int_{\R^d} q(x) dx} \right) = \pi\left( \dfrac{N^{d/2}(\lambda_N-\lambda_0)}{i\int_{\R^d} q(x) dx} \right) \law \NN\left(0,\sigma^2 \right).
\end{equation*} 
where a complex number $x+iy$ is seen as a vector $(x,y)$; $\pi(x + i y) = \pi(x,y) = x$; and $\sigma^2 = \int_{[-1,1]^d} |f(x)|^4 dx$. The statement about eigenvalues follows from \cite[p. 31]{DZ}: in the context of real-valued potentials, eigenvalues corresponds exactly to resonances on the half-line $i(0,\infty)$.
\end{proof}

\subsection{Necessity of the assumptions of Theorem \ref{thm:1}}\label{app:2}

We show on a simple explicit example that the conclusion of Theorem \ref{thm:1},
\begin{equation}\label{eq:1o}
N \text{ sufficiently large} \ \Rightarrow \ \Res(V_N) \cap \Dd(0,R) \subset \bigcup_{\lambda \in \Res(q_0)} \Dd\left(\lambda, N^{-\frac{\gamma}{2m_\lambda}} \right)
\end{equation}
cannot hold with probability $1$. 

We fix $d=1$, $q_0 \equiv 0$, $q \in C_0^\infty(\R,\R)$ with $\int_\R q(x) dx = 1$ and $u_j$ independent Bernouilli random variables ($\Pp(u_j=1) = \Pp(u_j = -1) = 1/2$). We observe that
\begin{equation*}
\Pp(\{ u_j=1 \ \forall j \in [-N,N]^3 \}) = 2^{-N} > 0,
\end{equation*}
and the potential corresponding to this event is $\tV_N(x) \de \sum_j q(Nx-j)$. The weak limit of $\tV_N$ as $N \rightarrow +\infty$ is $\1_{[-1,1]}$, and the convergence is in fact strong in $H^{-2}$. Indeed, if $\varphi \in C_0^\infty(\R,\C)$ then using a representation of $\int_{[-1,1]} \varphi(x) dx$ as a Riemann sum modulo $O(N^{-1} |\varphi'|_\infty)$,
\begin{equations*}
\lr{\tV_N,\varphi} - \lr{\1_{[-1,1]},\varphi} = \sum_j \int_{\R} q(Nx-j) \varphi(x) dx - \int_{[-1,1]} \varphi(x) dx \\
 = \dfrac{1}{N} \left( \sum_j \int_{\R} q(x) \varphi \left( \dfrac{x+j}{N} \right) dx - \sum_j \int_{\R} q(x) \varphi\left(\dfrac{j}{N} \right)dx\right) + O(N^{-1} |\varphi'|_\infty) \\
  = \dfrac{1}{N} \sum_j \int_\R q(x) \left(\varphi \left( \dfrac{x+j}{N} \right) - \varphi\left( \dfrac{j}{N} \right)\right) dx + O(N^{-1} |\varphi'|_\infty).
\end{equations*}
This is fully estimated by $O(N^{-1}|\varphi'|_\infty)$. Hence, $\tV_N-\1_{[-1,1]}$ converges to $0$ for the topology of distributions of order $1$ on $\R$. In dimension one, functions that are locally in $C^1$ are locally in $H^2$. Therefore $\tV_N-\1_{[-1,1]}$ also converges to $0$ for the topology of bounded linear functionals on $H^2$, i.e. for the $H^{-2}$ norm-topology, as claimed.

According to Lemma \ref{lem:1b}, resonances of $\tV_N$ converge to resonances of $\1_{[-1,1]}$ -- uniformly on compact sets. On the other hand, the potential $\1_{[-1,1]}$ has infinitely many resonances -- see for instance \cite[Theorem 2.14]{DZ} -- while $q_0$ has a single resonance. This shows that \eqref{eq:1o} cannot hold for every $R > 0$ when $\tV_N$ replaces $V_N$, in particular Theorem \ref{thm:1} holds with probability at most $1-2^{-N}$. 

We end up by mentionning that this construction can be adapted to higher dimension, with the conclusion that Theorem \ref{thm:1} cannot hold with probability greater than $1-2^{-N^d}$. The construction requires Smith--Zworski \cite{SZ} instead of \cite{DZ}: in odd dimension $d \geq 3$, bounded compactly supported real-valued potentials have at least one resonance.

\end{document}